\def\newaliasedtheorem#1[#2]#3{
  \newaliascnt{#1@alt}{#2}
  \newtheorem{#1}[#1@alt]{#3}
  \expandafter\newcommand\csname #1@altname\endcsname{#3}
}
\numberwithin{equation}{section}
\newtheoremstyle{slanted}{\topsep}{\topsep}{\slshape}{}{\bfseries}{.}{.5em}{}
\theoremstyle{plain}
\newtheorem{theorem}{Theorem}[section]
\theoremstyle{definition}
\theoremstyle{remark}
\newcommand{\setC}{\mathbb{C}}
\newcommand{\setN}{\mathbb{N}}
\newcommand{\setR}{\mathbb{R}}
\newcommand{\cE}{\mathcal{E}}
\newcommand{\cV}{\mathcal{V}}
\newcommand{\cB}{\mathcal{B}}
\newcommand{\eps}{\varepsilon}
\let\phi\varphi
\newcommand{\Id}{\mathrm{Id}}
\newcommand{\di}{\mathop{}\!\mathrm{d}}
\newcommand{\Ch}{{\sf Ch}}
\newcommand{\scal}[2]{\ensuremath{\langle #1 , #2 \rangle}} % scalar product
\DeclareMathOperator{\Lip}{Lip}
\newcommand{\haus}{\mathscr{H}}
\newcommand{\dist}{\mathsf{d}}
\newcommand{\meas}{\mathfrak{m}}
\newcommand{\dimnew}{{\rm dim}_{\dist,\meas}}
\newcommand{\diam}{\mathrm{diam}}
\DeclareMathOperator{\CD}{CD}
\DeclareMathOperator{\RCD}{RCD}
\DeclareMathOperator{\vol}{\mathrm{vol}}
\DeclareMathOperator{\Scal}{Scal}
\DeclareMathOperator{\Ric}{Ric}
\newfont{\tmpf}{cmsy10 scaled 2500}
\DeclareMathOperator{\ext}{xt}
\title[Spectral embeddings: a survey]{A survey on spectral embeddings and their application in data analysis}
\author{David Tewodrose}
\address{CY Cergy Paris University, Laboratoire de mathématiques AGM, UMR CNRS 8088, 2 av. Adolphe Chauvin, 95302 Cergy-Pontoise cedex}
\email{david.tewodrose@cyu.fr}
\begin{document}

\maketitle

%\title{A survey on spectral embeddings and their application in data analysis}
%\author{David Tewodrose\footnote{CY Cergy Paris University, david.tewodrose@cyu.fr}}

\begin{abstract}
The aim of this survey is to present some aspects of the Bérard-Besson-Gallot spectral embeddings of a closed Riemannian manifold from their origins in Riemannian geometry to more recent applications in data analysis.
\end{abstract}

\tableofcontents

\section{Introduction}

The spectral embeddings we deal with in this survey were introduced by P.~Bérard, G.~Besson and S.~Gallot in \cite{BerardBessonGallot} to provide new distances on the set of isometry classes of closed Riemannian manifolds -- here and in the sequel, by closed we mean compact without boundary, and we tacitly assume the manifolds to be smooth and connected with smooth Riemannian metric. Three families of embeddings of a given closed Riemannian manifold $(M,g)$ were proposed in this article:
\begin{itemize}
\item the unrescaled spectral embeddings $I_t^a : M \to l^2$,

\item the rescaled spectral embeddings $\Psi_t^a : M \to l^2$,

\item the spherical spectral embeddings $K_t^a : M \to S^{\infty}$,
\end{itemize}
where $l^2$ is the Hilbert space of square summable real-valued sequences, $S^{\infty}$ is the unit sphere in $l^2$, and $t>0$ is a parameter. These embeddings all depend on the choice of an orthonormal basis $a$ of $L^2(M)$ made of eigenfunctions of (minus) the Laplace-Beltrami operator $-\Delta_{(M,g)}$ of $(M,g)$. They are called \textit{spectral} because they are defined via the spectrum of $-\Delta_{(M,g)}$. For instance, the rescaled embeddings are defined by
$$
\Psi_t^{a}\,  : \, \, \,  
\begin{cases}
\,\, M & \to \quad l^2\\
\,\, x & \mapsto \quad s_{n,t} (e^{-\lambda_i t /2} \phi_i(x))_{i \ge 1}
\end{cases}
$$
where $a=\{\phi_i\}_{i \ge 0}$ is an orthonormal basis of $L^2(M)$ made of eigenfunctions of $-\Delta_{(M,g)}$, the numbers $0=\lambda_0<\lambda_1 \le \lambda_2 \le \ldots \to + \infty$ are the corresponding eigenvalues, and $s_{n,t}$ is a rescaling factor.

The unrescaled spectral embeddings revealed more interesting in regard to the authors' original motivations because they led to the so-called spectral distances, see Section 2.1 and 2.2. However, the rescaled ones have inspired remarkable developments in data analysis \cite{BelkinNiyogi, CoifmanLafon, BelkinNiyogi2, SingerWu} which, in turn, have raised deep theoretical questions \cite{Bates, Portegies, Wu, AmbrosioHondaPortegiesTewodrose}.\\

The main interest in the maps $\Psi_t^a$ lies in the fact that they are almost isometric, in the sense that they imply pull-back Riemannian metrics $g_t$ on $M$ such that
$$
g_t = g + t A + o(t), \qquad t \downarrow 0,
$$
in the sense of pointwise convergence, where $A$ is a $(0,2)$-tensor depending only on the Ricci and scalar curvature of $(M,g)$. This is Theorem 5 in \cite{BerardBessonGallot}. Of course, a celebrated result of J.~Nash \cite{Nash, DeLellis} implies that $(M,g)$ can be smoothly and isometrically embedded into $\setR^D$ for some $D$ depending only on the dimension of $M$. But Nash's embedding is less convenient than the Bérard-Besson-Gallot ones when analysis on the manifold is the matter: indeed, any reasonable piece of information on the eigenvalues or eigenfunctions of $\Delta_{(M,g)}$ directly affects the spectral embeddings, but not Nash's embedding. Nevertheless, the target space of the Bérard-Besson-Gallot embeddings is a Hilbert space, while the target space of Nash's one is Euclidean. This is one of the main reasons -- numerical applications being the other ones -- why truncated versions of the maps $\Psi_t^a$, that is maps of the form
$$
[\Psi_t^{a}]^m\,  : \, \, \,  
\begin{cases}
\,\, M & \to \quad \setR^m\\
\,\, x & \mapsto \quad s_{n,t} (e^{-\lambda_i t /2} \phi_i(x))_{1 \le i \le m}
\end{cases}
$$
where $m \in \setN \backslash \{0\}$, came under the spotlight. The works of J. Bates \cite{Bates} and J. Portegies \cite{Portegies} showed that under certain geometric constraints, namely Ricci curvature and injectivity radius bounded from below together with prescribed total volume, there exists a dimension $\overline{m}$ depending on the constraints such that $[\Psi_t^a]^{\overline{m}}$ is a smooth embedding for any $t>0$. We explain this result in Section 2.3. J.~Portegies went further by showing that under the same geometric constraints, the maps $[\Psi_t^a]^{m}$ can be made arbitrary close to being an isometry in the following sense: for any $\eps>0$, there exists $t_o>0$ such that for any $t \in (0,t_o)$, there exists $\overline{m}$ depending on $t$, $\eps$, and the geometric constraints, such that
$$
1-\eps < |D_x [\Psi_t^a]^{m}| < 1+\eps
$$ 
for any $x \in M$ and $m \ge \overline{m}$. We return to this quantitative statement in Section 2.4.

In \cite{SingerWu}, in the search for substantial improvements of the spectral embedding based techniques in data visualization, A.~Singer and H.-T.~Wu introduced another family of unrescaled spectral embeddings
$$
V_t^a : M \to l^2, \qquad t>0,
$$
by considering the connection Laplacian for vector fields $\Delta_{C, (M,g)}$ and its spectrum in place of the Laplace-Beltrami operator $\Delta_{(M,g)}$, and they showed that the natural associated distances
$$
\dist_{VDM, t}(x,y)=\|V_t(x)-V_t(y)\|_{l^2}
$$
on $M$ relates in a transparent way with the original Riemannian distance $\dist_g$ when $t \downarrow0$, see 
Theorem \ref{th:short-time}. Later on, H.-T.~Wu studied in \cite{Wu} the spectral distances naturally induced by the embeddings $V_t^a$ on the set of isometry classes of closed Riemannian manifolds and got precompactness results for these distances. We present this interesting alternative in Section 2.5.

Finally, in Section 2.6 we deal with recent results from  \cite{AmbrosioHondaPortegiesTewodrose} where the Bérard-Besson-Gallot spectral embeddings are studied in the context of compact metric measure spaces satisfying the synthetic Riemannian Curvature-Dimension condition $\RCD(K,N)$. Hopefully this may lead to new types of machine learning algorithms using approximations of a data set by means of a singular metric measure space instead of a smooth Riemannian manifold.\\

Let us spend more words on the link between these embeddings and data analysis. A classical problem in data science is the following. Say that we are given a phenomenon that we want to observe through several features. We repeat this phenomenon a high number of times and collect at each occurence some data related to the features. The data thus obtained can be stored in a matrix where each column corresponds to one occurence of the phenomenon and each row accounts for one feature. If $D$ is the number of features and $N$ the number of occurences, we get this way a $D\times N$ matrix. Each column of the matrix can be understood as a vector : this yields to a point cloud in $\setR^D$ made of $N$ elements. The structure of this point cloud reveals some information. A notable piece of information is the number $d$ of constraints the phenomenon may truly be subject to. Finding $d$ is of crucial importance, especially when it is small compared to $D$. Indeed, in most cases, $D$ is too high to perform direct classifying tasks in a reasonable amount of time on the original $D$-dimensional representation of the data set. This is why we seek for a faster to handle $d$-dimensional representation. This problem is usually called \textit{dimensionality reduction}. For more about this topic, we refer the non-expert reader to the introduction of
 \cite{Young} where three concrete applications of dimensionality reduction are presented: one from political science, one from psychology, and the last one from sociology.
 
Let us provide another popular example. Say that we want to classify pictures of a same object taken under various angle and light conditions. If the number of pixels used for the pictures is very high, say $64\times 64 = 4096$, then we are left with classifying points in $\setR^{4096}$, each coordinate corresponding to the brightness on one pixel. However, the observation of the object depends solely on three constraints: two for the angle conditions, one for the light conditions. In this case, a dimensionality reduction looks particularly relevant.
 
Classical techniques like Principal Component Analysis or Multidimensional Scaling are very effective regarding this issue, but only when the data lies on or near a low-dimensional \textit{linear} subspace $V$ of the feature space $\setR^D$, in which case finding the dimension $d$ of $V$ is part of the algorithm. Problem is that they fail to detect the geometry or the true dimensionality of a data set subject to \textit{non-linear} constraints, like in the previous example where the angle under which the pictures are taken obeys a spherical constraint. For this reason, the past twenty years have registered the development of new algorithms taking into account the possibly curved geometry of the data point cloud: without claiming to be exhaustive, let us cite Locally Linear Embedding \cite{RoweisSaul}, ISOMAP \cite{TenenbaumSilvaLangford}, Laplacian Eigenmaps \cite{BelkinNiyogi}, Diffusion Maps \cite{CoifmanLafon} and Vector Diffusion Maps \cite{SingerWu}. All these algorithms fall within the framework of \textit{manifold learning}, that is the study of data whose point cloud representation in $\setR^D$ lies on (or near) a smooth submanifold $M^d$ of $\setR^D$.

In Section 3. we describe the Laplacian Eigenmaps and the Vector Diffusion Maps algorithms for which the parallel with the Bérard-Besson-Gallot spectral embeddings is blatant. Indeed, the rough idea behind Laplacian Eigenmaps is to produce from the data point cloud a family of weighted graphs $(\cV,\cE,w_t)$ serving as discrete approximations of $M$ and to study suitable operators $-L_t$ on $(\cV,\cE,w_t)$. These operators can be viewed as discrete approximations of the Laplace-Beltrami operator $\Delta_M$ of $M$. Thus the eigenvalues $\lambda_i^{t}$ and eigenvectors $v_i^{t}$ of $L_t$ are reasonable approximations of the eigenvalues $\lambda_i$ and eigenfunctions $\phi_i$ of $-\Delta_M$. From this observation, one can define a discrete and finite counterpart ot the Bérard-Besson-Gallot embeddings for the data point cloud in order to embed it almost isometrically into some Euclidean space with low dimension.

\smallskip\noindent
\textbf{Acknowledgement.}
The author would like to thank the anonymous referee for precious remarks and especially for having raised Open Problem 2.

\section{Theoretical aspects}

In this section, we present the theoretical motivations behind the Bérard-Besson-Gallot embeddings, and list several important refinments.

\subsection{Motivations from Riemannian geometry}

Spectral embeddings were introduced by P.~Bérard, G.~Besson and S.~Gallot in \cite{BerardBessonGallot} to tackle specific problems from Riemannian geometry. These problems were described in \cite[VI.~E.]{Berard}. We provide here a brief summary.

In his celebrated book \cite{Gromov}, M.~Gromov introduced a distance on the set of compact metric spaces, nowadays called Gromov-Hausdorff distance, defined as follows: for two
compact metric spaces $(X,\dist_X)$ and $(Y,\dist_Y)$, set
$$
\dist_{GH}((X,\dist_X),(Y,\dist_Y)):=\inf_{(Z,i,j)}\left\{ \dist_{H,Z} (i(X),j(Y))\right\},
$$
where the infimum is taken over the set of triples $(Z,i,j)$ such that $Z$ is a metric space and $i : X \hookrightarrow Z$, $j : Y \hookrightarrow Z$ are isometric embeddings, and $\dist_{H,Z}$ stands for the Hausdorff distance in $Z$. This distance is particularly relevant when restricted to special classes of Riemannian manifolds satisfying geometric constraints. For instance, the set
$$
\mathcal{N}(n,S,D,V) : = \{ (M^n,g) \, \,\text{closed  $\, : \,\, |\mathrm{Sect}| \le S, \,\,\mathrm{diam}(M) \le D, \,\,\vol(M) \ge V$}\},
$$
where $\mathrm{Sect}$ and $\mathrm{diam}$ denotes the sectional curvature and the diameter of $(M,g)$ respectively and $S,D,V>0$ are fixed geometric parameters, is compact in the Gromov-Hausdorff topology. This grants for free uniform bounds on any geometric quantity that is continuous with respect to the Gromov-Hausdorff topology. A drawback of this approach is that the bounds thus obtained are not explicit. Moreover, several interesting quantities are not preserved through Gromov-Hausdorff convergence, like the dimension, the Betti numbers or the eigenvalues of the Laplace-Beltrami operator: some counter-examples are given for instance in \cite{BerardBesson}.

Still, it is tempting to investigate in that way the properties of the set
$$
\mathcal{M}(n,K,D) : = \{ (M^n,g) \, \, \text{closed} \, \, : \, \, \Ric \ge K,\, \,
 \mathrm{diam}\le D\},
$$
where $K\in \setR$ and $D>0$ are fixed, because manifolds in this set satisfy common geometric and analytic properties like the Bonnet-Myers theorem, the Bishop-Gromov theorem, the local $L^2$-Poincaré inequality, and so many more. It turns out that $\mathcal{M}(n,K,D)$ is precompact with respect to the Gromov-Hausdorff topology\footnote{this is Gromov's celebrated precompactness theorem \cite[Th.~5.3]{Gromov}}: from any sequence of Riemannian manifolds belonging to this set, one can extract a convergent subsequence. Limit points obtained in this way are complete metric spaces nowadays called Ricci limit spaces. Their study has produced an abundant literature, with critical works by K.~Fukaya \cite{Fukaya}, J.~Cheeger and T.~Colding \cite{CheegerColdingI,CheegerColdingII,CheegerColdingIII}, T.~Colding and A.~Naber \cite{ColdingNaber}, or S.~Honda \cite{Honda}, to cite only a few.

On the other hand, a well-known result in spectral geometry (see e.g.~\cite[V.~28]{Berard}) asserts that for any $K \in \setR$ and $D>0$, there exists a positive function $Z(t)$ depending only on $n,K,D$ such that
$$
Z_{(M,g)}(t) \le Z(t)
$$
for any $(M,g) \in \mathcal{M}(n,K,D)$ and $t>0$,
where $Z_{(M,g)}(t):=\int_M p(x,x,t) \di \vol(x)=\sum_{i=0}^{+\infty} e^{-\lambda_i t}$ is the trace of the heat kernel of the manifold $(M,g)$. This uniform bound on a spectral quantity, as well as other related spectral results, led to the search for \textit{spectral distances} on the set of closed Riemannian manifolds for which the set $\mathcal{M}(n,K,D)$ would have been at least precompact. Such distances were introduced in \cite{BerardBessonGallot} by means of spectral embeddings.

Let us conclude with mentioning the work of A.~Kasue and H.~Kumura \cite{KasueKumura} in which were introduced other spectral distances via a different heat-kernel based approach. These distances are finer than the Gromov-Hausdorff topology and the sets $\mathcal{M}(n,K,D)$ are precompact for them as well.

\subsection{Spectral embeddings}

Let $(M,g)$ be a closed $n$-dimensional Riemannian manifold with Riemannian volume measure $\vol$ and Laplace-Beltrami operator $\Delta$. Here we follow the convention that $\Delta$ is a non-positive operator. Standard tools from functional analysis show that $-\Delta$ is a densely defined self-adjoint non-negative operator on $L^2(M)$ with discrete spectrum $0=\lambda_0<\lambda_1\le \lambda_2 \le \ldots \to +\infty$, or $0=\nu_0<\nu_1<\nu_2<\ldots \to +\infty$ if we ignore multiplicity, and that $L^2(M)$ can be decomposed into $\oplus_{k=0}^{+\infty} E_k$, where $E_k$ is the space of eigenfunctions associated to the eigenvalue $\nu_k$. Moreover, elliptic regularity theory implies that any eigenfunction of $-\Delta$ is smooth. The operator $-\Delta$ generates a semi-group of self-adjoint operators $(e^{-t\Delta})_{t>0}$ acting on $L^2(M)$. This semi-group admits a positive kernel i.e.~a function $p:M\times M \times (0,+\infty)\to (0,+\infty)$ such that
$$
e^{-t\Delta} f (x) = \int_M p(x,y,t)f(y)\di \vol(y)
$$
holds for any $x \in M$, $t>0$, and $f \in L^2(M)$.
This function $p$, called heat kernel of $(M,g)$, is jointly smooth with respect to any of its three variables. Furthermore, for any orthonormal basis $(\phi_i)_{i \ge 0}$ of $L^2(M)$ adapted to the decomposition $L^2(M) = \oplus_{k=0}^{+\infty} E_k$, the sum of the functions $((t,x,y)\mapsto e^{- \lambda_i t} \varphi_i(x) \varphi_i (y))_{i\ge 0}$ converges in $C^{\infty}((0,+\infty)\times M\times M)$ and provides the so-called spectral decomposition of the heat kernel:
\begin{equation}\label{eq:spectraldecomposition}
p(x,y,t) = \sum_{i = 0}^{+\infty} e^{- \lambda_i t} \varphi_i(x) \varphi_i (y) \qquad \forall x,y \in M,
\end{equation}
see \cite[Th.~10.13]{Grigor'yan}, for instance. Moreover, if $y$ is fixed, we also have
$$
\nabla p(\cdot,y,t) = \sum_{i = 0}^{+\infty} e^{- \lambda_i t} \varphi_i (y) \nabla \varphi_i  \qquad \text{in $L^2(TM)$,}
$$
where $L^2(TM)$ is the Hilbert space of $L^2$ vector fields on $M$ equipped with the scalar product defined by $
\langle V,W\rangle_{L^2(TM)}:=\left(\int_X g(V,W) \di\vol_g \right)^{1/2}$ for any $V,W \in L^2(TM)$.

Now that the appropriate context has been set up, let us introduce the unrescaled and rescaled Bérard-Besson-Gallot spectral embeddings. They both depend on the choice of an orthonormal basis of $L^2(M)$ adapted to the decomposition $L^2(M)=\oplus_{k=0}^{+\infty} E_k$. We write $\mathcal{B}(M,g)$ for the set of such bases.

According to the motivations concerning spectral distances described in the previous section, the most relevant family of embeddings is the following.

\begin{definition}[Unrescaled spectral embeddings]

Let $(M,g)$ be a closed Riemannian manifold. The unrescaled spectral embeddings of $M$ are the maps
$$
I_t^{a}\,  : \, \, \,  
\begin{cases}
\,\, M & \to \quad l^2\\
\,\, x & \mapsto \quad \sqrt{\vol(M)} (e^{-\lambda_i t /2} \phi_i(x))_{i \ge 1}
\end{cases}
$$
where $t>0$ and $a=\{\phi_i\}_{i\ge 0} \in \mathcal{B}(M,g)$.
\end{definition}

\begin{remark}
Note that \eqref{eq:spectraldecomposition} implies $\|I_t^{a}(x)\|_{l^2}^2 = \vol(M)(p(x,x,t)-e^{-\lambda_0 t} \phi_0^2(x))$ and thus $I_t^{a}(x) \in l^2$ for any $x \in M$.
\end{remark}

It is easily checked that the functions $I_t^{a}$ are topological embeddings. Indeed, continuity directly follows from the formula
$$
\|I_t^{a}(x)-I_t^{a}(y)\|_{l^2} = \vol(M)(p(x,x,t)+p(y,y,t)-2p(x,y,t))
$$
which is a straightforward consequence of \eqref{eq:spectraldecomposition}. Injectivity stems from the fact that any basis of $L^2(M)$ separates points. Lastly, since $M$ is compact, any one-to-one continuous function with domain $M$ is necessarily an homeomorphism onto its image.

The functions $I_t^{a}$ permit to define the so-called \textit{spectral distances}.

\begin{definition}[Spectral distances]\label{def:sd}
For any $t>0$ and any closed Riemannian manifolds $(M,g)$ and $(M',g')$, set
\begin{align*}
d_{SD,t}((M,g),(M',g')):=\max &   \left\{
\sup_{a \in \cB(M,g)} \inf_{a' \in \cB(M',g')} \dist_{H,l^2}(I_t^a(M),I_t^{a'}(M')),\right.\\
& \left.\, \, \,  \sup_{a' \in \cB(M',g')} \inf_{a \in \cB(M,g)} \dist_{H,l^2}(I_t^{a'}(M'),I_t^{a}(M))\, \, \, \right\}.
\end{align*}
\end{definition}

Note the analogy with the definition of Hausdorff distance in a metric space $(X,\dist)$: for any $Y,Z\subset X$, the Hausdorff distance between $Y$ and $Z$ is set as $\dist_{H,X}(Y,Z):= \max \left\{\sup_{y \in Y} \inf_{z \in Z} \dist(y,z), \sup_{z \in Z} \inf_{y \in Y} \dist(y,z) \right\}$.

It can be shown that $\dist_{SD,t}$ define distances on the set of isometry classes of closed Riemannian manifolds. Moreover, for any geometric parameters $n, K, D$, the set of isometry classes of $\mathcal{M}(n,K,D)$ is precompact for any of these distances: indeed, suitable estimates on the eigenvalues and eigenfunctions for manifolds in this set show that the embeddings of such manifolds form a subset of the space $h^1:=\{\{\xi_i\}_i \in l^2 \, : \, \sum_{i\ge 1}(1+i^{2/n})|\xi_i|^2 < +\infty \}$ which is precompact in $l^2$ by Rellich's theorem.\\

A rescaled version of the spectral embeddings $I_t^{a}$ was also proposed in \cite{BerardBessonGallot}. These rescaled embeddings turn out to have better properties from a \textit{metric} point of view.

\begin{definition}[Rescaled spectral embeddings]

Let $(M,g)$ be a closed Riemannian manifold. The unrescaled spectral embeddings of $M$ are the maps
$$
\Psi_t^{a} \,  : \, \, \,  
\begin{cases}
\,\, M & \to \quad l^2\\
\,\, x & \mapsto \quad c_n t^{(n+2)/4} (e^{-\lambda_i t /2} \phi_i(x))_{i \ge 1}
\end{cases}
$$
where $t>0$,  $a=(\phi_i)_{i\ge 1} \in \mathcal{B}(M,g)$ and $c_n:=\sqrt{2}(4 \pi)^{n/4}$.
\end{definition}

The $\Psi_t^{a}$ maps are topological embeddings for the same reasons as the $I_t^{a}$ maps are. But they enjoy an additional property: when $t\downarrow 0$, they tend to be an isometry.

\begin{theorem}[Asymptotic isometry]\label{th:asymptoiso}
For any $t>0$ and $a \in \mathcal{B}(M,g)$, the map $\Psi_t^a$ is a smooth embedding. Moreover,
\begin{equation}\label{eq: BBG expansion}
[\Psi_t^a]^*g_{l^2} =  g + \frac{t}{3}\left(\frac{\mathrm{Scal}}{2}\,  g - \mathrm{Ric}\right) + O(t^2) \quad \text{when $t \downarrow 0$},
\end{equation}
in the sense of pointwise convergence, where $\Scal, \Ric$ denotes the scalar  and Ricci curvatures of $(M,g)$ respectively.
\end{theorem}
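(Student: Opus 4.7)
The plan is to reduce the computation of the pullback metric to an identity involving mixed second derivatives of the heat kernel at the diagonal, and then apply the Minakshisundaram--Pleijel short-time parametrix. For $x \in M$ and $v \in T_x M$, differentiating the components of $\Psi_t^a$ and squaring yields
\[
[\Psi_t^a]^* g_{l^2}(v,v) \;=\; c_n^2\, t^{(n+2)/2} \sum_{i \ge 1} e^{-\lambda_i t}\, (d\varphi_i)_x(v)^2.
\]
Since $\varphi_0$ is constant, one may harmlessly extend the sum to $i \ge 0$, and by the $C^{\infty}$-convergence of the spectral decomposition \eqref{eq:spectraldecomposition} away from $t=0$ already recalled in the excerpt, the right-hand side coincides with $c_n^2\, t^{(n+2)/2}\, \partial_v^x \partial_v^y p(x,y,t) \big|_{x=y}$.

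Near the diagonal the heat kernel admits the parametrix expansion
\[
p(x,y,t) \;\sim\; (4\pi t)^{-n/2} e^{-d^2(x,y)/(4t)} \bigl( u_0(x,y) + t\, u_1(x,y) + t^2 u_2(x,y) + \cdots \bigr),
\]
with the classical identifications $u_0(x,x) = 1$, $u_1(x,x) = \tfrac{1}{6}\Scal(x)$, and
\[
\partial_v^x \partial_v^y u_0(x,y)\big|_{x=y} \;=\; -\tfrac{1}{6}\Ric(v,v),
\]
obtained by solving the transport equations for the parametrix to first order in normal coordinates. Computing $\partial_v^x \partial_v^y$ on the diagonal, the first-order derivatives in either variable of $d^2(x,y)/(4t)$ vanish, so only two contributions survive: differentiating twice the exponential factor (which gives the singular contribution $\tfrac{|v|^2}{2t}$, since $\partial_v^x \partial_v^y d^2(x,y)|_{x=y} = -2|v|^2$) and differentiating twice the amplitude. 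Using the key normalization $c_n^2\, t^{(n+2)/2} \cdot (4\pi t)^{-n/2} = 2t$, the singular factor is absorbed and one obtains
\[
[\Psi_t^a]^* g_{l^2}(v,v) \;=\; |v|^2 + 2t\Bigl( \tfrac{|v|^2 \Scal(x)}{12} - \tfrac{\Ric(v,v)}{6} \Bigr) + O(t^2) \;=\; g(v,v) + \tfrac{t}{3}\!\left(\tfrac{\Scal}{2}\, g - \Ric\right)\!(v,v) + O(t^2),
\]
which is precisely \eqref{eq: BBG expansion}.

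It remains to argue that $\Psi_t^a$ is a smooth embedding for every $t > 0$. Smoothness as an $l^2$-valued map follows from the $C^\infty$-convergence of \eqref{eq:spectraldecomposition}; the topological embedding property is obtained exactly as for $I_t^a$ in Section 2.2, i.e. continuity plus injectivity of $(\varphi_i)_{i\ge 1}$ plus compactness of $M$. For the immersion property at every $t > 0$, observe that a vanishing of $\sum_{i \ge 1} e^{-\lambda_i t}(d\varphi_i)_x(v)^2$ forces $d\varphi_i(v) = 0$ for all $i \ge 1$, hence $df(v) = 0$ for every $f \in C^\infty(M)$ by density of finite eigenfunction expansions, and therefore $v = 0$. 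The main obstacle of the whole proof lies in the curvature bookkeeping for the heat invariants: both identifications $u_1(x,x) = \Scal/6$ and $\partial_v^x\partial_v^y u_0|_{x=y} = -\Ric(v,v)/6$ are the technical heart of the matter and require careful Taylor expansion of the transport equations for the heat parametrix in normal coordinates, together with an estimate showing that the remainder truncated after $u_2$ contributes only $O(t^2)$ to the pullback metric.
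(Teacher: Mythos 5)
Your proposal is correct and follows essentially the same route as the paper: reducing the pullback metric to the mixed second derivative of the heat kernel on the diagonal, applying the Minakshisundaram--Pleijel expansion with $u_0(x,x)=1$, $u_1(x,x)=\Scal/6$ and the mixed derivative of $u_0$ equal to $-\tfrac16\Ric(v,v)$, and absorbing the singular factor via $c_n^2 t^{(n+2)/2}(4\pi t)^{-n/2}=2t$. Your added justification of the immersion property (positivity of the eigenfunction-gradient sum plus $C^1$-convergence of eigenfunction expansions of smooth functions) is a welcome detail the paper only treats implicitly.
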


Here $l^2$ is seen as a Riemannian manifold modelled on a Hilbert space (see e.g.~\cite[1.1, 1.6]{Klingenberg2}) with tangent spaces $T_f l^2$ all canonically isomorphic to $l^2$ itself and equipped with the Riemannian metric 
$$(g_{l^2})_f(\xi,\zeta):=\sum_{i} \xi_i \zeta_i$$
for any $f \in l^2$ and $\xi, \zeta \in T_f l^2$, and $[\Psi_t^a]^*g_{l^2}$ is the pull-back metric defined in the usual way by
$$
([\Psi_t^a]^*g_{l^2})_x(u,v) :=(g_{l^2})_{\Psi_t^a(x)}(D\Psi_t^a(x)\cdot u,D\Psi_t^a(x)\cdot v)
$$
for any $x \in M$ and $u,v \in T_xM$. Using the definitions of all the objects involved, this last line rewrites as:
\begin{equation}\label{eq:17/08}
([\Psi_t^a]^*g_{l^2})_x(u,v) = c_n^2 t^{(n+2)/2}\sum_{i=0}^{+\infty} e^{-\lambda_i t} (D\phi_i(x)\cdot u )(D\phi_i(x)\cdot v).
\end{equation}

\begin{remark}
A version of Theorem \ref{th:asymptoiso} where the metric $g$ depends analytically on the parameter $t$ while the volume form remains constant was proved by H.~Abdallah in \cite{Abdallah}.
\end{remark}

\begin{remark}
In \cite{WangZhu}, X.~Wang and K.~Zhu perturbed the Bérard-Besson-Gallot embeddings to construct a family of \textit{isometric} embeddings $F_t : M^n \to \setR^{q(t)}$ for any $t>0$ small enough, with $q(t) >> t^{-n/2}$.
\end{remark}

\begin{remark}
The equation $$\displaystyle
\frac{\di}{\di t} g_t = \frac{\Scal_{g_t}}{2} g_t - \Ric_{g_t}$$
defines the gradient flow of the Einstein-Hilbert functional $
E : g \mapsto \int_M \Scal \mathrm{d}\vol$ (see \cite[Sect.~6.1]{Topping}, for instance). Therefore, the flow defined by the Bérard-Besson-Gallot spectral embeddings is tangent to the Ricci-Bourguignon flow at the initial time, in the sense that 
$$\displaystyle
\left.\frac{\di}{\di t}\right|_{t=0} g_t = \frac{\Scal}{2} g - \Ric.$$
To the best knowledge of the author, there is no satisfactory explanation of this simple observation in the literature.\\
\end{remark}

Let us sketch a proof of Theorem \ref{th:asymptoiso}. We refer to \cite[Sect.~5.1]{TewodrosePhD} for detailed computations. For convenience, we set $p_t(\cdot,\cdot):=p(\cdot,\cdot,t)$ and $g_t:=[\Psi_t^a]^*g_{l^2}$. Take $x \in M$ and $v \in T_xM$. From \eqref{eq:17/08}, we have
$$
[g_t]_x(v,v) = c_n^2 t^{(n+2)/2} \sum_{i=0}^{+\infty} e^{-\lambda_i t} (D\phi_i(x)\cdot v )^2.
$$
Let us introduce here a useful notation. Let $F:M\times M \to (0,+\infty)$ be a smooth function. We define $\di^1 F (x): T_x M \times M \to (0,+\infty)$ by $\di^1 F(x)(v,y):= (\partial_x F)(x,y) \cdot v$ and $\di^2 [ \di^1 F (x) ](y): T_yM
 \times T_xM \to (0,+\infty)$ by $\di^2 [ \di^1 F (x) ](y) \cdot(v,w)=\partial_y[\di^1F(x)(v,\cdot)](y)\cdot w$ for any $y \in M$, $v \in T_xM$ and $w \in T_yM$. We call mixed derivative of $F$ at $(x,x)$ the map
$$
 D^{\mathrm{mix}} F(x,x) := \di^2 [ \di^1 F (x) ](x): T_xM
 \times T_xM \to (0,+\infty).
$$
A direct computation with the spectral decomposition \eqref{eq:spectraldecomposition} shows that
 $$
  D^{\mathrm{mix}} p_t(x,x) \cdot (v,v) = \sum_{i=0}^{+\infty} e^{-\lambda_i t} (D\phi_i(x)\cdot v )^2
$$
hence
\begin{equation}\label{eq:17/08.2}
 [g_t]_x(v,v) = c_n^2 t^{(n+2)/2} D^{\mathrm{mix}} p_t(x,x) \cdot (v,v).
\end{equation}
Let us recall a classical result which is a consequence of the celebrated Minakshisundaram-Pleijel small-time expansion of the heat kernel \cite{MinakshisundaramPleijel, BergerGauduchonMazet}. We write $\mathrm{inj}(M)$ for the injectivity radius of $(M,g)$ and use the classical notation $\exp_x$ for the exponential map at $x$. Recall that $\exp_x$ is a smooth diffeomorphism from $B_{\mathrm{inj}(M)}(0_n) \subset \setR^n \simeq T_xM $ to $B_{\mathrm{inj}(M)}(x)$, where $0_n$ denotes the origin in $\setR^n$.

\begin{claim}
For any $x,y \in M$ such that $\dist(x,y) < \mathrm{inj}(M)$, where $\dist$ is the canonical Riemannian distance on $(M,g)$,
\begin{equation}\label{eq:MP}
p(x,y,t) = \frac{1}{(4\pi t)^{n/2}} e^{-\frac{\dist^2(x,y)}{4t}}(u_0(x,y) + t u_1(x,y) + O(t^2))
\end{equation}
when $t \downarrow 0$, where $u_0(x,y)=\theta_x(\exp_x^{-1}(y))^{-1/2}$, $\theta_x$ is the density of $(\exp_x^{-1})_\#\vol$ with respect to the Lebesgue measure in $T_xM \simeq \setR^n$, and $u_1$ is a smooth function such that $u_1(x,x)=\Scal(x)/6$. Moreover, \eqref{eq:MP} can be differentiated as many times as desired with respect to any of the spatial variables $x$ and $y$.
\end{claim}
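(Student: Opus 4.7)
The plan is to construct an approximate solution of the heat equation of the prescribed form on a neighborhood of the diagonal, and then use Duhamel's principle to compare it with the actual heat kernel $p$.

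First I would make the Minakshisundaram--Pleijel ansatz
$$H_N(x,y,t) := \frac{1}{(4\pi t)^{n/2}} e^{-\dist^2(x,y)/(4t)} \sum_{k=0}^N t^k u_k(x,y),$$
defined on the open set $\{(x,y) \in M\times M : \dist(x,y) < \mathrm{inj}(M)\}$, and impose $(\partial_t - \Delta_y) H_N = O(t^{N-n/2})$ as $t \downarrow 0$. Plugging the ansatz into the heat operator and collecting powers of $t$, the cancellation of the leading singular term forces a linear first-order transport equation for $u_0$ along the radial geodesics emanating from $x$; integrating this ODE in normal coordinates based at $x$ yields the explicit formula $u_0(x,y) = \theta_x(\exp_x^{-1}(y))^{-1/2}$. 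The next orders force inhomogeneous first-order transport equations for the $u_k$'s, whose right-hand sides involve $\Delta_y u_{k-1}$; these are again solved by integration along geodesics, producing smooth coefficients on a uniform neighborhood of the diagonal.

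With the $u_k$'s at hand, I would multiply $H_N$ by a smooth cut-off equal to $1$ in a neighborhood of the diagonal and supported in $\{\dist(x,y) < \mathrm{inj}(M)\}$, so as to obtain a globally defined smooth function which still approximates $p$ modulo an error $r_N := (\partial_s - \Delta) H_N$ that is itself $O(t^{N-n/2})$. Duhamel's principle then yields
$$p(x,y,t) - H_N(x,y,t) = \int_0^t \int_M p(x,z,t-s)\, r_N(z,y,s)\, \di\vol(z)\,\di s,$$
and elementary Gaussian-type estimates on $r_N$ combined with standard on-diagonal heat kernel upper bounds show that this remainder is $O(t^{N+1-n/2})$ uniformly on compact subsets of $\{\dist(x,y)<\mathrm{inj}(M)\}$; choosing $N$ large enough gives the two-term expansion with $O(t^2)$ error. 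The possibility of differentiating the expansion as many times as desired follows either by differentiating both sides of the Duhamel identity (the parametrix is smooth and $r_N$ can be made arbitrarily regular by taking $N$ larger) or by standard parabolic interior regularity applied to $p - H_N$.

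The identity $u_1(x,x) = \Scal(x)/6$ is finally extracted from the transport equation for $u_1$ by plugging in the classical Taylor expansion $\theta_x(v) = 1 - \tfrac{1}{6}\Ric_x(v,v) + O(|v|^3)$ of the volume density in normal coordinates, computing $\Delta_y(u_0)|_{y=x}$ and taking the trace. This Taylor expansion of $\theta_x$ is, to my mind, the only genuinely geometric input of the argument and the main technical step; everything else reduces to solving ODEs along geodesics and to routine parabolic estimates.
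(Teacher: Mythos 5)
Your proposal is correct and coincides with the intended proof: the Claim is exactly the classical Minakshisundaram--Pleijel expansion, which the paper invokes without proof by citing Minakshisundaram--Pleijel and Berger--Gauduchon--Mazet, and your sketch (Gaussian ansatz, transport equations for the $u_k$ solved along radial geodesics giving $u_0=\theta_x(\exp_x^{-1}(\cdot))^{-1/2}$, cut-off plus Duhamel comparison with the true kernel, and the normal-coordinate expansion of $\theta_x$ yielding $u_1(x,x)=\Scal(x)/6$) is precisely the standard parametrix argument from those references. The statement on differentiability of the expansion is likewise handled as in the classical treatment, by taking $N$ large and estimating derivatives of the Duhamel remainder.
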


Thus
\begin{align*}
D^{\mathrm{mix}} p_t(x,x) \cdot (v,v) & = \frac{1}{(4 \pi t)^{n/2}} \left[ D^{\mathrm{mix}} e^{-\frac{\dist^2(x,x)}{4t}} \cdot (v,v) \bigg( u_0(x,x)+tu_1(x,x)+O(t^2)\bigg) \right.\\
& \qquad \qquad \qquad \qquad \qquad \left. + \, \underbrace{e^{-\frac{\dist^2(x,x)}{4t}}}_{=1}\bigg( D^{\mathrm{mix}} u_0(x,x) \cdot (v,v) + O(t)\bigg) \right].
\end{align*}
A direct computation shows that
$$
D^{\mathrm{mix}} e^{-\frac{\dist^2(x,x)}{4t}} \cdot (v,v) =  -\frac{1}{4t} D^{\mathrm{mix}} \dist^2(x,x) \cdot (v,v) =  \frac{g_x(v,v)}{2t}\, \cdot
$$
Moreover, $u_0(x,x)=\theta_x(0)^{-1/2}=1$, and as well-known, if $(r,u)$ are polar coordinates on $T_xM$, then
\begin{equation}\label{eq:expRicci}
\theta_x(r,u)=r^{n-1}(1-(r^2/6)\Ric_x(u,u)+O(r^3))
\end{equation} when $r \downarrow 0$, what leads to
$$
D^{\mathrm{mix}} u_0(x,x) \cdot (v,v) = - \frac{1}{6}\Ric_x(v,v)
$$
via another direct computation. Thus
$$
D^{\mathrm{mix}} p_t(x,x) \cdot (v,v) = \frac{1}{(4 \pi t)^{n/2}} \left[ \frac{g_x(v,v)}{2t}\left( 1 + t \frac{\Scal(x)}{6}\right)-\frac{1}{6}\Ric_x(v,v) + O(t)\right].
$$
The result follows from multiplying this last line by $2t(4\pi t)^{n/2}=c_n^2 t^{(n+2)/2}$.

\subsection{Truncated versions}

For numerical applications (see Section 3), it is important to know whether truncating the rescaled spectral embeddings $\Psi_t^{a}$ keep their nice metric properties. Here by truncating we mean to consider the maps
\begin{equation}\label{eq:truncatedembedding}
[\Psi_t^{a}]^m\,  : \, \, \,  
\begin{cases}
\,\, M & \to \quad \setR^m\\
\,\, x & \mapsto \quad c_n t^{(n+2)/4} (e^{-\lambda_1 t /2} \phi_1(x),\ldots,e^{-\lambda_m t /2} \phi_m(x))
\end{cases}
\end{equation}
where $m$ is a positive integer. The main question is then:\\

does there exist an integer $m$ depending only on the dimension $n$ and on geometric parameters $a_1,a_2,\ldots$ such that for any closed $n$-dimensional Riemannian manifold with geometric constraints dictated by $a_1,a_2,\ldots$, the maps $[\Psi_t^{a}]^m$ are smooth embeddings for any $t>0$ and close to being an isometry for $t$ smaller than a uniform threshold $t_o>0$?\\

In fact, this question is not without interest neither from a theoretical point, especially when placed in perspective with Whitney's and Nash's celebrated embedding theorems.

In \cite{Bates}, J.~Bates studied this question in the case of the functions
$$
\Phi^{a}\,  : \, \, \,  
\begin{cases}
\,\, M & \to \quad \setR^\setN\\
\,\, x & \mapsto \quad (\phi_i(x))_{i\ge 1}.
\end{cases}
$$
He called \textit{maximal embedding dimension} of $M$ the smallest positive integer $m$ such that the map
$$
[\Phi^{a}]^m\,  : \, \, \,  
\begin{cases}
\,\, M & \to \quad \setR^m\\
\,\, x & \mapsto \quad (\phi_1(x),\ldots,\phi_m(x)).
\end{cases}
$$
is a smooth embedding for any $a=\{\phi_i\}_i \in \mathcal{B}(M,g)$. Note that the map $[\Phi^{a}]^m$ is a smooth embedding if and only if for some (or any) $t>0$ the  map $[\Psi_t^{a}]^m$ is so too, as the two maps are equal up to composition by the invertible matrix $c_n t^{(n+4)/2}\mathrm{Diag}(e^{-\lambda_1 t}, \ldots, e^{-\lambda_n t})$.

Bates' main result is the existence of a uniform upper bound on the maximal embedding dimension of Riemannian manifolds satisfying suitable geometric constraints.

\begin{theorem}
Let $K_o\ge 0$, $i_o>0$ be fixed constants and $n\ge 2$ an integer. Then there exists $\overline{m}=\overline{m}(K_o,i_o,n) \in \setN$ such that $[\Phi^{a}]^{\overline{m}} : M \to \setR^{\overline{m}}$ is a smooth embedding for any $a \in \mathcal{B}(M,g)$, where $(M,g)$ is any closed $n$-dimensional Riemannian manifold satisfying
\begin{equation}\label{eq:geoconstr}
\Ric \ge - (n-1) K_o g, \qquad \mathrm{inj}(M) \ge i_o, \qquad \vol(M)=1.
\end{equation}
\end{theorem}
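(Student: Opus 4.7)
The plan is to prove the statement by exhibiting, for any $a \in \mathcal{B}(M,g)$, a dimension $\overline{m}=\overline{m}(n,K_o,i_o)$ and a scale $t_o = t_o(n,K_o,i_o)$ such that the truncated rescaled embedding $[\Psi_{t_o}^a]^{\overline{m}}$ is both an immersion and injective for every $(M,g)$ satisfying \eqref{eq:geoconstr}. This suffices: since $[\Phi^a]^m$ and $[\Psi_t^a]^m$ are related by composition with an invertible diagonal linear map, one is a smooth embedding if and only if the other is, and because $M$ is compact any injective immersion into $\setR^{\overline{m}}$ is automatically a topological embedding.

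The analytic heart of the argument consists of three uniform a priori bounds on the eigendata of $(M,g) \in \mathcal{M}(n,K_o,i_o)$, with constants $c_1,c_2,c_3$ depending only on $n, K_o, i_o$: a Weyl-type lower bound $\lambda_i \ge c_1 i^{2/n}$, a sup-norm bound $\|\phi_i\|_\infty \le c_2 \lambda_i^{(n-1)/4}$, and a gradient bound $\|\nabla \phi_i\|_\infty \le c_3 \lambda_i^{(n+1)/4}$. All three follow from Li--Yau Gaussian heat kernel estimates, which under \eqref{eq:geoconstr} give $p(x,y,t) \le C(n,K_o) t^{-n/2}$ for small $t$, combined with the semigroup identity $\phi_i = e^{\lambda_i s} e^{s \Delta}\phi_i$ applied with $s$ of order $1/\lambda_i$ to convert $L^2$ information into $L^\infty$ information. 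These bounds ensure that the tails $\sum_{i > m} e^{-\lambda_i t}\|\phi_i\|_\infty^2$ and $\sum_{i > m} e^{-\lambda_i t}\|\nabla \phi_i\|_\infty^2$ decay to $0$ as $m \to \infty$ uniformly on $\mathcal{M}(n,K_o,i_o)$, for each fixed $t > 0$.

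With this toolkit in hand, I would first choose $t_o$ small using Theorem \ref{th:asymptoiso} so that $[\Psi_{t_o}^a]^*g_{l^2} \ge \tfrac{1}{2} g$ uniformly in the class, the remainder in \eqref{eq: BBG expansion} being controlled through the uniform Minakshisundaram--Pleijel expansion (licensed by the injectivity radius bound). From \eqref{eq:17/08}, the full pull-back metric and its $m$-truncation differ by $c_n^2 t_o^{(n+2)/2}\sum_{i>m} e^{-\lambda_i t_o}(D\phi_i)\otimes(D\phi_i)$, whose operator norm is dominated by the gradient tail above; picking $m \ge m_1(n,K_o,i_o)$ yields $([\Psi_{t_o}^a]^m)^*g_{l^2} \ge \tfrac{1}{4} g$, hence the uniform immersion property. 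For injectivity I would split into scales. At large scale $\dist(x,y)\ge \delta$, the identity
\[
\|\Psi_{t_o}^a(x)-\Psi_{t_o}^a(y)\|_{l^2}^2 = c_n^2 t_o^{(n+2)/2}\bigl(p(x,x,t_o)+p(y,y,t_o)-2p(x,y,t_o)\bigr)
\]
plus two-sided Gaussian heat kernel estimates gives a uniform positive gap; truncation removes only a tail controlled by $4 c_n^2 t_o^{(n+2)/2}\sum_{i>m}e^{-\lambda_i t_o}\|\phi_i\|_\infty^2$, which is smaller than half this gap for $m \ge m_2(n,K_o,i_o)$. At small scale $\dist(x,y)<\delta$, the uniform immersion bound together with the comparison between $\dist$ and the Euclidean distance in normal coordinates on balls of radius $i_o$ produces a separation, for $\delta$ chosen small enough. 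Taking $\overline{m} := \max(m_1, m_2)$ concludes.

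The main obstacle is establishing the sup-norm and gradient bounds on eigenfunctions with the correct power of $\lambda_i$ and with constants depending only on $n, K_o, i_o$: a soft compactness argument would give existence of $\overline{m}$ for each manifold but not uniformity, so these effective estimates -- obtained by combining local Li--Yau gradient estimates with the heat-semigroup representation of eigenfunctions -- are the genuinely non-trivial input that drives the whole argument.
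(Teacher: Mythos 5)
Your reduction (truncated rescaled map is an immersion and injective, compactness upgrades this to an embedding, and $[\Phi^a]^m$ versus $[\Psi^a_t]^m$ differ by an invertible diagonal map) is fine, but the analytic core has a genuine gap: you cannot ``choose $t_o$ small using Theorem \ref{th:asymptoiso} so that $[\Psi^a_{t_o}]^*g_{l^2}\ge \tfrac12 g$ uniformly in the class.'' Theorem \ref{th:asymptoiso} is a pointwise statement for a fixed manifold, and under the constraints \eqref{eq:geoconstr} it cannot be made uniform by citing the Minakshisundaram--Pleijel expansion: the MP coefficients and remainder require two-sided control of the curvature tensor (and its derivatives), while \eqref{eq:geoconstr} only gives a lower Ricci bound and a lower injectivity radius bound. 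Indeed, even the first-order term $\tfrac{t}{3}\bigl(\tfrac{\Scal}{2}g-\Ric\bigr)$ in \eqref{eq: BBG expansion} is not uniformly bounded on $\mathcal{M}(n,K_o,i_o)$, since no upper curvature bound is assumed, so there is no class-wide threshold $t_o$ extractable from the expansion. A uniform lower bound on $D[\Psi^a_t]^m$ over exactly this class is the content of Portegies' Theorem \ref{th:truncated}, whose proof requires the quantitative harmonic-radius construction and delicate heat kernel analysis; assuming it as an easy consequence of the expansion makes your argument essentially circular. A second, smaller gap: at small scale, a uniform lower bound on the differential alone does not give injectivity on balls of a \emph{uniform} radius; you would additionally need a uniform modulus of continuity of $D[\Psi^a_{t_o}]^m$ (second-order bounds on the embedding, i.e.\ on $\nabla^2\phi_i$ or on mixed second derivatives of $p_t$), which you do not establish. (Minor: under \eqref{eq:geoconstr} the heat-semigroup argument gives $\|\phi_i\|_\infty\lesssim \lambda_i^{n/4}$ and $\|\nabla\phi_i\|_\infty\lesssim \lambda_i^{(n+2)/4}$ rather than the H\"ormander exponents you quote, but this does not harm the tail estimates.)

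The paper's proof (Bates) avoids the uniform-expansion issue altogether, and you may want to compare. Locally, it invokes the Jones--Maggioni--Schul theorem: on a ball admitting a normalized $C^\alpha$-controlled coordinate patch there are $n$ eigenfunctions $\phi_{i_1},\ldots,\phi_{i_n}$ giving a bi-Lipschitz chart, with eigenvalues trapped in $[\kappa^{-1}r^{-2},\kappa r^{-2}]$; such patches of uniform size exist on all of $\mathcal{M}(n,K_o,i_o)$ by an Anderson--Cheeger-type harmonic (here $C^\alpha$) coordinate construction. The B\'erard--Besson--Gallot Weyl-type lower bound $\lambda_i\ge C_s i^{2/n}$ then caps the indices $i_1,\ldots,i_n$ by an $\overline m$ depending only on $n,K_o,i_o$, which yields the uniform local embedding; points at definite distance are separated by a short heat kernel computation. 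Note also that the statement you are proving only asks for a smooth embedding, not an almost-isometry, so the quantitative immersion bound you aim for is stronger than needed and is precisely what makes your route require Portegies-level input.
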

The proof can be divided into two steps:
\begin{itemize}
\item[(a)] to show the existence of a dimension $\overline{m}=\overline{m}(K_o,i_o,n)$ and a radius $\overline{r}=\overline{r}(K_o,i_o,n)>0$ such that the map $[\Phi^a]^{\overline{m}}$ is a smooth embedding when restricted to any ball $B_r$ in $M$ with $r \in (0,\overline{r})$,

\item[(b)] to check that any two points $x,y \in M$ such that $\dist(x,y)>\overline{r}$ are distinguished by $[\Phi^a]^{\overline{m}}$.
\end{itemize}

Roughly speaking, (a) is achieved via the combination of three results. We need to introduce some terminology and notation to state them. We call \textit{normalized} any coordinate patch $(U,h)$ around a point $x$ of a Riemannian manifold $(M,g)$ such that $h(x)=0_n$ and $g_{ij}(0_n)=\delta_{ij}$ for any $i,j \in \{1,\ldots,n\}$, where $(g_{ij})_{i,j}$ are the coefficients of $g$ read in $h$. We write $G$ for the $n \times n$ matrix $[g_{ij}]_{i,j}$. Note that $g_{ij} \in C^{\infty}(h(U))$ for any $i,j$ and that we can define a family of norms on $\setR^n$, parametrized by $h(U)$, by setting $|\xi|_{G(\cdot)}:=\sum_{i,j} g_{i,j}(\cdot)\xi_i \xi_j$ for any $\xi \in \setR^n$. For brevity, we write $|\xi|_{G}$ instead of $|\xi|_{G(\cdot)}$. Lastly, we recall that for any $\alpha \in (0,1)$, the $C^\alpha$-norm of a function $f: \Omega \to \setR$, where $\Omega \subset \setR^n$ is an open set, is set as
$$
[f]_\alpha:= \sup_{x \in \Omega} |f(x)| + \sup_{x,y \in \Omega, x\neq y} \frac{|f(x)-f(y)|}{\|x-y\|_2^\alpha}\, \cdot
$$

The first result comes from the work of P.~Jones, M.~Maggioni and R.~Schul \cite{JonesMaggioniSchul}. It asserts that if $(M^n,g)$ is a closed Riemannian manifold with $\vol(M)=1$, if there exists a normalized coordinate patch $(U,h)$ around a point $z\in M$ satisfying
\begin{itemize}
\item[(i)] $h(U)=B_r(0_n)$ for some $r>0$,
\item[(ii)] $Q^{-1}\|\cdot\|_2\le |\cdot|_{G} \le Q \|\cdot\|_2$ for some $Q>1$,
\item[(iii)] $\sup_{i,j} [g_{ij}]_{C^\alpha(B_r(0_n))}\le C$ for some $\alpha \in (0,1)$ and $0<C<+\infty$,
\end{itemize}
 then there exist a constant $\kappa=\kappa(n,Q,\alpha,C)>1$ and positive integers $i_1,\ldots,i_n$ such that for any $a=\{\phi_i\} \in \mathcal{B}(M,g)$, the map
 $$
\tilde{\Phi}^a\,  : \, \, \,  
\begin{cases}
\,\, B_{\kappa^{-1}r}(z) & \to \quad \setR^n\\
\quad \quad x & \mapsto \quad (\gamma_1 \phi_{i_1}(x),\ldots,\gamma_n \phi_{i_n}(x)), 
\end{cases}
$$
where $\gamma_j := (\fint_{B_{\kappa^{-1}r} }\phi_{i_j})^{-1/2}$ for any $j \in \{1,\ldots,n\}$, satisfies
$$
\frac{\kappa^{-1}}{r} \dist(x,y) \le \|\tilde{\Phi}^a(x)-\tilde{\Phi}^a(y)\|_2\le \frac{\kappa}{r} \dist(x,y)
$$
for any $x,y \in B_{\kappa^{-1}r}(z)$, where $\dist$ is the canonical Riemannian distance of $(M,g)$; in particular, $\tilde{\Phi}^a$ is a bi-Lipschitz embedding onto its image. Moreover, the eigenvalues associated with the eigenfunctions $\phi_{i_1}, \ldots, \phi_{i_n}$ satisfy:
\begin{equation}\label{eq:moreover}
\frac{\kappa^{-1}}{r^2} \le \lambda_{i_j} \le \frac{\kappa}{r^2} \qquad \forall j \in  \{1,\ldots,n\}.
\end{equation}
Of course since the $\phi_i$ are smooth, then $\tilde{\Phi}^a$ is smooth too.

The second result is the construction of $C^\alpha$-harmonic coordinate patches by M.~Anderson and J.~Cheeger \cite{AndersonCheeger}. Let us recall that a coordinate patch $h=(h_1,\ldots,h_n)$ on an $n$-dimensional Riemannian manifold $(M,g)$ is called \textit{harmonic} whenever $\Delta h_{i}=0$ for any $i$. Moreover, for $\alpha \in (0,1)$ given, $h$ is called $C^\alpha$-harmonic if it is harmonic, normalized, satisfying (i) and (ii) given above and:
\begin{itemize}
\item[(iii)'] $r^\alpha \sup_{i,j} [g_{ij}]_{C^\alpha(B_r(0_n))}\le Q-1$.
\end{itemize}
M.~Anderson and J.~Cheeger proved that any $n$-dimensional Riemannian manifold satisfying the constraints \eqref{eq:geoconstr} can be covered by an atlas of $C^\alpha$-harmonic coordinate patches with a uniform radius called \textit{harmonic radius} that depends only on $n, K_o, i_o, \alpha, Q$. Bates adapted the proof of this result to show that the same result holds with (iii)' replaced by (iii). This permits to apply the previous result.

The third result is a uniform bound on the eigenvalues of the Laplace-Beltrami operator for closed Riemannian manifolds satisfying suitable geometric constraints, see \cite[Th.~3]{BerardBessonGallot}: for any $D>0$, there exists a constant $C_{s}=C_{s}(n,K,D)>0$ such that for any closed connected $n$-dimensional Riemannian manifold $(M,g)$ with $\Ric\ge -(n-1)K g$ and $\mathrm{\diam}(M)\le D$, one has:
\begin{equation}\label{eq:BBGbounds}
\lambda_i \ge C_{s} i^{2/n} \qquad \forall{i \ge 0}.
\end{equation}

Since there exists $D>0$ such that $\diam(M)\le D$ for any $(M,g)$ such that $\vol(M)=1$, then \eqref{eq:BBGbounds} is in force when we assume the constraints \eqref{eq:geoconstr}. Applying \eqref{eq:moreover}, we get
$$
\frac{\kappa}{r^2} \ge \lambda_{i_j} \ge C_{s} i_j^{2/n} \qquad \forall j \in \{1,\ldots,n\}.
$$
Choosing $\overline{m}$ as the ceiling value of $(\kappa r^{-2}C_{s}^{-1})^{n/2}-1$ ensures $C_{s} (\overline{m}+1)^{2/n} > \kappa r^{-2}$ and thus prevents $i_1, \ldots, i_n$ to be greater than $\overline{m}$. Then $[\Phi^{a}]^{\overline{m}}$ is a local embedding because it takes into account the maps $\phi_{i_1}, \ldots, \phi_{i_n}$ that separate points in balls of $M$.

\hfill

The proof of (b) is obtained by a quick computation using suitable information on the heat kernel. We do not provide the details here, see \cite[Sect.~3]{Bates}.

\subsection{Quantitative versions}

If $(M,g)$ is a smooth Riemannian manifold and $F:M\to \setR^N$ a smooth function, the norm of the differential of $F$ at a point $x\in M$, namely $$|DF(x)|:=\sup\{\|DF(x)\cdot v\| \, : \, v \in T_xM \, \,\text{with} \, \, g(v,v)=1\},$$ provides an estimate on how far the function $F$ is to be a local isometry in a neighborhood of $x$. Of course $|DF(x)|$ depends on the norm $\|\cdot\|$ we put on $\setR^N$. Here we will only consider the Euclidean norm $\|\cdot\|_2$ and the infinity norm $\|\cdot\|_{\infty}$ and write $|D_x F|_2$ and $|D_x F|_{\infty}$ respectively for the corresponding norms of $F$ at $x$ measured with these two norms.

It follows from Theorem \ref{th:asymptoiso} that the Bérard-Besson-Gallot rescaled spectral embeddings $(\Psi^a_t)_{t>0}$ are such that $$|D \Psi^a_t(x)|_2 \to 1$$ when $t\to 0^+$. This observation raises a natural question: for any accuracy parameter $\eps>0$, does there exist a threshold time $t_o>0$ such that for any $t\in (0,t_o)$, $$1-\eps < |D \Psi^a_t|_2 < 1+\eps \, \,?$$
Here and in the sequel we omit $x$ in $|D\Psi^a_t(x)|$ to mean that the statement holds for any $x \in M$.

In \cite{Portegies}, J.~Portegies answered this question for several families of spectral embeddings including the Bérard-Besson-Gallot rescaled ones, by considering the sets of closed Riemannian manifolds
$$\mathcal{M}(n,K,i,V):=\{ (M^n,g) \, \, : \, \, \Ric \ge Kg,\, \,
 \mathrm{inj}(M)\ge i, \, \, \vol(M)\le V\},
$$
where $n \in \setN\backslash\{0\}$,  $K \in \setR$ and $i ,V>0$. From now on we consider these parameters as fixed.

Let us begin with a family of maps introduced by Portegies himself. Let $(M,g)$ be a closed $n$-dimensional Riemannian manifold. For any positive integer $N$, any $q_1, \ldots, q_N \in M$ and any $t>0$, consider the smooth map 
$$
G_{(q_1,\ldots,q_N),t}\,  : \, \, \,  
\begin{cases}
\,\, M & \to \quad \setR^N\\
\,\, \, x & \mapsto \quad c_nt^{(n+1)/2}(p(q_1,x,t),\ldots,p(q_N,x,t))
\end{cases}
$$
where $c_n=\sqrt{2}\pi^{-n/2}e^{-1/2}$ is a scaling factor. One can understand this map as giving a picture of $M$ snapshotted by the heat kernel at time $t$ from the viewing points $q_1,\ldots, q_N$. Therefore, the more viewing points we have at our disposal, the better we might recover the manifold. In order to catch the geometry of the manifold in an optimal way, it sounds natural to consider points in a $\delta$-net\footnote{we recall that for any $\delta>0$, a $\delta$-net of a metric space $(X,\dist)$ is a subset $Y$ of $X$ such that the balls $\{B_{\delta/2}(y)\}_{y \in Y}$ are disjoint while the union of the balls $\{B_{\delta}(y)\}_{y \in Y}$ covers $X$}, with $\delta$ small.  From a computational perspective though, we cannot afford to use too many points, so $\delta$ cannot be too small. The next theorem gives a quantitative estimate on $\delta$ to ensure that the map $G_{(q_1,\ldots,q_N),t}$ associated with any $\delta$-net $(q_1,\ldots,q_N)$ is in a range $\eps$ from being an isometric embedding of $M$ into $\setR^N$ equipped with the norm $\|\cdot\|_\infty$.

\begin{theorem}[Net heat kernel embeddings in $(\setR^N,\|\cdot\|_\infty)$]\label{th:Portegies1}

For any $\eps>0$, there exists $t_o>0$ depending only on $n,K,i, \eps,$ and $N_o \in \setN$ depending only on $n,K,i,V,\eps$ such that for any $t\in(0,t_o)$, there exists a net parameter $\delta>0$ depending only on $n,K,i,\eps,t$ such that for any $\delta$-net $\{q_1,\ldots,q_{N_o}\}$ of $(M,g) \in \mathcal{M}(n,K,i,V)$, the map $G_{(q_1,\ldots,q_{N_o}),t}$ is an embedding satisfying $$1-\eps < |DG_{(q_1,\ldots,q_{N_o}),t}|_{\infty}< 1+\eps.$$
\end{theorem}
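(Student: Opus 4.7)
My strategy is to reduce the estimate on $|DG_{(q_1,\ldots,q_{N_o}),t}|_{\infty}$ to a Gaussian model problem via the Minakshisundaram--Pleijel expansion (the Claim), and then harvest enough points from a sufficiently fine $\delta$-net so that the maximum over $i$ is attained near the peak of this model. Since \eqref{eq:MP} may be differentiated with respect to $x$, for $q$ satisfying $\dist(q,x)<\mathrm{inj}(M)$ one has, in any unit direction $v\in T_xM$,
\[
\nabla_x p(q,x,t)\cdot v
= \frac{u_0(q,x)}{(4\pi t)^{n/2}} e^{-\dist(q,x)^2/4t}\Bigl[-\tfrac{\dist(q,x)}{2t}\bigl(\nabla_x\dist(q,x)\cdot v\bigr)\Bigr] + R(q,x,t,v),
\]
where the remainder $R$ is controlled by the smooth coefficients $u_0,u_1$ and their derivatives. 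After multiplication by the normalisation $c_n t^{(n+1)/2}$, the principal term, viewed as a function of $q$, is a Gaussian of the variable $\rho=\dist(q,x)$ weighted by $\rho$ and modulated by $-\nabla_x\dist(q,x)\cdot v$. A one-dimensional calculus exercise shows the modulus of $\rho\mapsto\rho e^{-\rho^2/4t}$ is maximised at $\rho=\sqrt{2t}$, and the direction factor is maximised (in absolute value) when $q$ lies on the geodesic through $x$ in the direction $-v$. Thus the ideal optimiser is $q^*=q^*(x,v):=\exp_x(\sqrt{2t}\,v)$, and with the specific $c_n$ the normalised peak value equals $1$ up to an $O(\sqrt{t})$ correction coming from $u_0,u_1$ and the derivative of $\dist^2$.

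\textbf{Choice of parameters.} First pick $t_o=t_o(n,K,i,\eps)$ small enough that, on every $(M,g)\in\mathcal{M}(n,K,i,V)$, the expansion \eqref{eq:MP} and its $x$-derivative are valid with a uniform remainder $\le\eps/10$ on the ball $B_{2\sqrt{t}}(x)$ for every $x\in M$; this uniformity relies on the Anderson--Cheeger theory of harmonic coordinates, which the hypotheses $\Ric\ge Kg$ and $\mathrm{inj}(M)\ge i$ make available, together with uniform $C^k$ heat-kernel bounds (derived e.g.\ from Li--Yau). Given such $t$, choose $\delta=\delta(n,K,i,\eps,t)$ so small that moving $q$ by $\delta$ changes $\rho e^{-\rho^2/4t}$, $u_0(q,x)$ and $\nabla_x\dist(q,x)$ each by at most $\eps/10$ in the relevant range. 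Finally, since $\Ric\ge Kg$ and $\vol(M)\le V$, Bishop--Gromov bounds the cardinality of any $\delta$-net by some $N_o(n,K,i,V,\eps)$, which we take as the uniform $N_o$.

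\textbf{Verification.} For any $x\in M$ and any unit $v\in T_xM$, the net property produces an index $i=i(x,v)$ with $\dist(q_i,q^*(x,v))\le\delta$; the error budget above then yields $|c_n t^{(n+1)/2}\nabla_x p(q_i,x,t)\cdot v|>1-\eps$. Conversely, for every other index $j$, the Gaussian model together with the uniform remainder gives $|c_n t^{(n+1)/2}\nabla_x p(q_j,x,t)\cdot v|<1+\eps$, and for $q_j$ far from $x$ (with $\dist(q_j,x)$ of order bigger than a fixed multiple of $\sqrt{t}$) we use standard uniform Gaussian upper bounds on the heat kernel and its gradient to make the contribution $\ll\eps$. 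Taking the max in $i$ yields the asserted two-sided bound on $|DG_{(q_1,\ldots,q_{N_o}),t}|_{\infty}$. For the embedding property, smoothness of $G_{(q_1,\ldots,q_{N_o}),t}$ is automatic; injectivity follows because $|DG|_{\infty}>1-\eps$ gives a local bi-Lipschitz estimate, and distinct points $x\neq y$ separated by more than the injectivity radius are distinguished by the fact that, if $\delta\ll\sqrt{t}$, the ideal optimiser $q^*(x,v)$ lies in the net for some $v$ and realises very different values of $p(\cdot,\cdot,t)$ at $x$ and $y$.

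\textbf{Main obstacle.} The hard point is uniformity of every estimate over the whole class $\mathcal{M}(n,K,i,V)$: we need the Minakshisundaram--Pleijel expansion and its spatial derivatives to be valid with \emph{uniform} remainders, and the heat kernel together with $\nabla_x p$ must admit uniform two-sided Gaussian bounds depending only on $n,K,i$. The interlocking dependencies in the theorem's statement ($t_o$ free of $V$, $N_o$ free of $t$, $\delta$ free of the specific manifold) reflect this delicate bookkeeping; in particular, allowing $\delta$ to depend on $t$ is unavoidable because as $t\to 0$ the gradient of $p(q,\cdot,t)$ concentrates more sharply near $\dist(q,x)=\sqrt{2t}$, so the net must refine accordingly to always contain an approximate optimiser.
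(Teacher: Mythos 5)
Your Gaussian-model heuristic is indeed the right mechanism behind the $\|\cdot\|_\infty$ statement: the normalised directional derivative $q\mapsto c_n t^{(n+1)/2}\nabla_x p(q,x,t)\cdot v$ peaks near $q^*=\exp_x(\sqrt{2t}\,v)$, a sufficiently fine net supplies an approximate optimiser, and the far-field contributions are killed by Gaussian decay. But the step you yourself flag as the ``main obstacle'' is precisely where the proof is missing, and the fix you propose does not work. You assert that the Minakshisundaram--Pleijel expansion \eqref{eq:MP} \emph{and its spatial derivative} hold with a remainder $\le\eps/10$ uniformly over $\mathcal{M}(n,K,i,V)$. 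The MP coefficients and remainder are controlled by the curvature tensor and its covariant derivatives, none of which are bounded in this class: already $u_1(x,x)=\Scal(x)/6$ has no uniform bound under $\Ric\ge Kg$, $\mathrm{inj}(M)\ge i$, $\vol(M)\le V$, so a uniform first-order expansion at scale $\sqrt t$ simply is not available. Li--Yau-type estimates give uniform rough Gaussian upper and gradient bounds (adequate for your far-field term), but they cannot produce the sharp two-sided $1\pm\eps$ comparison with the Euclidean model that you need at distances $\sim\sqrt t$. This is exactly why the proof sketched in the survey (following Portegies) takes a different route: one first establishes a \emph{quantitative} harmonic radius bound, depending only on $n,K,i,V$, by combining Bishop--Gromov and the segment inequality to build distance-function coordinates, replacing them by harmonic functions solving a Dirichlet problem, and using interior elliptic estimates together with a quantitative maximum principle; in the resulting harmonic charts the metric is $C^{1,\alpha}$-close to Euclidean at a definite scale, and the heat kernel and its gradient are then compared with the Euclidean kernel \emph{in those coordinates}, with errors controlled by the $C^{1,\alpha}$-closeness rather than by curvature derivatives. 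That comparison argument is the content you would have to supply in place of the uniform MP expansion.

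Two further points. First, your claim that Bishop--Gromov bounds the cardinality of any $\delta$-net by some $N_o(n,K,i,V,\eps)$ independent of $t$ is not correct: the cardinality of a $\delta$-net is of order $\vol(M)/\vol(B_{\delta/2})$, and since your $\delta$ must shrink with $t$ (as you correctly observe), this bound degenerates as $t\downarrow 0$; obtaining the quantifier structure of the statement ($N_o$ free of $t$, $\delta$ depending on $t$) requires more care than this remark. Second, your injectivity argument for points at intermediate distances is only gestured at; with uniform two-sided Gaussian heat-kernel bounds it can be made to work, but as written it is not a proof.
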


\begin{remark}
Portegies' net heat kernel embeddings are inspired by Gromov's variant of the Kuratowski distance functions embedding \cite{Gromov}. The link between heat kernel embeddings and distance functions embeddings is given by Varadhan's celebrated formula:
$$
\lim\limits_{t \to 0} - 4t \log(p(x,y,t)) = \dist^2(x,y).
$$
\end{remark}

\begin{remark}
Portegies also provided a \textit{truncated} version of Theorem \ref{th:Portegies1}, namely where the heat kernel involved in the definition of $G$ is replaced by $p^{N}$ which is defined by keeping only the $N$-th first terms in the spectral decomposition \eqref{eq:spectraldecomposition}.
\end{remark}

In order to replace the uniform norm on $\setR^N$ by the Euclidean norm, Portegies introduced a family of suitably weighted heat kernel embeddings. This leads to the following embedding theorem which states, roughly speaking, that for any $\eps>0$, one can select a finite number of points on a manifold in $\mathcal{M}(n,K,i,V)$ from which to build a heat kernel embedding that is $\eps$-close to be an isometry. We call \textit{special points} these heat kernel embeddings.

\begin{theorem}[Special points heat kernel embeddings into $(\setR^N,\|\cdot\|_2)$]
For any $\eps>0$, there exists $t_o>0$ depending only on $n,K,i, \eps$ such that for any $t\in(0,t_o)$, there exists $\lambda>0$ and $N \in \setN$ both depending only on $n,K,i,V,\eps,t$ such that for any $(M,g) \in \mathcal{M}(n,K,i,V)$, there exists points $\overline{q}_1,\ldots,\overline{q}_{N} \in M$ such that the map
$$
H^{t,\lambda}_{\overline{q}_1,\ldots,\overline{q}_{N}}\,  : \, \, \,  
\begin{cases}
\,\, M & \to \quad \setR^{N}\\
\,\, x & \mapsto \quad c_n't^{\frac{n+2}{4}}\lambda(p(\overline{q}_1,x,t),\ldots,p(\overline{q}_{N},x,t)),
\end{cases}
$$
where $c_n':=2^{(3n+4)/4}\pi^{n/4}$ is a scaling factor, is an embedding satisfying
$$
1-\eps \le |DH^{t,\lambda}_{\overline{q}_1,\ldots,\overline{q}_{N}}|_2 \le 1+\eps.
$$
\end{theorem}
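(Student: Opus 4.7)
The plan is to reduce the claim to Theorem \ref{th:asymptoiso} by recognising $|DH^{t,\lambda}|_2^2$ as a Riemann sum converging to a pull-back metric already computed there. For a unit vector $v\in T_xM$, direct differentiation gives
$$
\bigl|D_xH^{t,\lambda}_{\overline{q}_1,\ldots,\overline{q}_N}\cdot v\bigr|_2^2 = (c_n')^2\,t^{(n+2)/2}\,\lambda^2\sum_{j=1}^{N}\bigl(D_xp(\overline{q}_j,x,t)\cdot v\bigr)^2.
$$
Set $I(x,v,t):=\int_M(D_xp(q,x,t)\cdot v)^2\di\vol(q)$. By differentiating the semigroup identity $\int_M p(q,x,t)p(q,y,t)\di\vol(q)=p(x,y,2t)$ once in $x$ and once in $y$ we find $I(x,v,t)=D^{\mathrm{mix}}p_{2t}(x,x)\cdot(v,v)$, and applying \eqref{eq:17/08.2} at time $2t$ together with the small-time expansion \eqref{eq:MP} yields
$$
I(x,v,t) = \frac{g_x(v,v)}{c_n^{\,2}(2t)^{(n+2)/2}}\bigl(1+O(t)\bigr),
$$
the error being uniform over $(M,g)\in\mathcal{M}(n,K,i,V)$ because the Minakshisundaram-Pleijel coefficients and remainder can be controlled by the Ricci lower bound and the injectivity radius bound on this class.

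The second and central step is a quantitative Riemann-sum estimate. Pick $(\overline{q}_j)_{j=1}^N$ as a $\delta$-net of $M$ and partition $M=\bigsqcup_{j=1}^N C_j$ with $\overline{q}_j\in C_j$ and $\diam(C_j)\le\delta$. Writing $F_{x,v,t}(q):=(D_xp(q,x,t)\cdot v)^2$,
$$
\Bigl|I(x,v,t)-\sum_{j=1}^N\vol(C_j)\,F_{x,v,t}(\overline{q}_j)\Bigr| \le \delta\sum_{j=1}^N\vol(C_j)\,\sup_{C_j}\|\nabla_qF_{x,v,t}\|,
$$
and $\|\nabla_qF_{x,v,t}\|$ is bounded in terms of $n,K,i,t$ by Li-Yau type gradient estimates on $p$. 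Bishop-Gromov gives $N\le C(n,K,V)\delta^{-n}$ with $\vol(C_j)\sim \vol(M)/N$, so $\delta=\delta(n,K,i,\eps,t)$ small enough keeps the relative error below $\eps$. Finally, setting
$$
\lambda^2 := \frac{\vol(M)\,c_n^{\,2}\,2^{(n+2)/2}}{(c_n')^2\,N},
$$
the prefactor $(c_n')^2 t^{(n+2)/2}\lambda^2(N/\vol(M))$ is tuned to equal exactly $c_n^{\,2}(2t)^{(n+2)/2}$, so that the leading term reproduces $g_x(v,v)$ and the bounds $1-\eps\le|DH^{t,\lambda}|_2\le 1+\eps$ follow by taking $t_o$ small enough to absorb the intrinsic $O(t)$ and $N$ large enough to absorb the Riemann-sum error.

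Injectivity of $H^{t,\lambda}$ splits as in \cite{Bates}: locally, it follows from the just-obtained lower bound $|DH^{t,\lambda}|_2\ge 1-\eps$ via the inverse function theorem; globally, it follows from a separation-at-distance argument analogous to step~(b) of the truncated embedding theorem presented in Section~2.3, since Varadhan-type asymptotics force $p(\overline{q}_j,x,t)$ and $p(\overline{q}_j,x',t)$ to differ substantially for some $\overline{q}_j$ close to $x$ whenever $\dist(x,x')$ is bounded away from zero.

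The hard step is clearly the uniformity of the Riemann-sum estimate across $\mathcal{M}(n,K,i,V)$: it requires quantitative heat-kernel gradient bounds with constants depending only on $n,K,i,t$ and a covering whose combinatorics is controlled by $n,K,V$. Once these ingredients are secured, the rest of the proof is a scale-matching exercise calibrated through the choice of $\lambda$.
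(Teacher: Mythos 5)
Your reduction of $|D_xH^{t,\lambda}\cdot v|_2^2$ to an equal-weight Riemann sum for $\int_M (D_xp(q,x,t)\cdot v)^2\di\vol(q)=D^{\mathrm{mix}}p_{2t}(x,x)\cdot(v,v)$ is a pleasant observation, but the two steps you then take for granted are exactly where the theorem is hard. First, the uniformity of the small-time expansion over $\mathcal{M}(n,K,i,V)$ is asserted, not proved: on this class you only control a Ricci lower bound, an injectivity radius lower bound and a volume upper bound, while the Minakshisundaram--Pleijel coefficients and remainder involve the full curvature tensor (already $u_1(x,x)=\Scal(x)/6$, and after dividing by the leading term $g_x(v,v)/(4t)$ the relative error is of size $t$ times curvature quantities), none of which is bounded above on $\mathcal{M}(n,K,i,V)$. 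Making the expansion of $p$ and of its first and mixed second derivatives uniform under these weak hypotheses is precisely the content of Portegies' quantitative harmonic-radius construction (the quantitative Anderson--Cheeger argument sketched at the end of Section 2.4), which is the actual basis of the proof of this theorem; your one-line justification assumes away that core. The same issue affects your bound on $\nabla_q F_{x,v,t}$: it involves the mixed derivative $\nabla_q D_x p$, which plain Li--Yau estimates do not give, so it again requires uniform derivative estimates that have to be established, not invoked.

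Second, the equal-weight discretization does not work as written. For a $\delta$-net, Bishop--Gromov together with the injectivity radius bound only gives two-sided bounds $c(n,K,i)\delta^n\le\vol(C_j)\le C(n,K)\delta^n$: the cells are comparable in volume up to a fixed factor that does not tend to $1$ as $\delta\downarrow 0$, so $\vol(C_j)\sim\vol(M)/N$ is false and replacing $\vol(C_j)$ by $\vol(M)/N$ distorts the sum by a bounded factor, not by $1\pm\eps$. This is exactly why the Euclidean-norm statement, unlike the sup-norm Theorem \ref{th:Portegies1}, forces Portegies to introduce \emph{weighted} heat kernel embeddings and then to select \emph{special} points so that a single scalar $\lambda$ suffices; with a generic net and uniform weight the almost-isometry simply fails. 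Relatedly, your choice $\lambda^2=\vol(M)\,c_n^2\,2^{(n+2)/2}/((c_n')^2N)$ depends on $\vol(M)$, which varies over the class, whereas the theorem requires $\lambda$ and $N$ to be fixed from $n,K,i,V,\eps,t$ alone, before the manifold is given. Finally, your injectivity argument is also incomplete: a pointwise lower bound on $|DH\cdot v|$ makes $H$ an immersion but does not by itself yield injectivity on balls of a definite radius, so gluing the local regime to the heat-kernel separation at definite distance needs a quantitative local bi-Lipschitz statement, which again rests on the uniform control you have not established.
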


Finally, let us provide Portegies' truncated and quantitative version of the Bérard-Besson-Gallot almost-isometry theorem.

\begin{theorem}[Almost isometric truncated Bérard-Besson-Gallot spectral embeddings]\label{th:truncated}
For any $\eps>0$, there exists $t_o>0$ depending only on $n,K,i,\eps$ such that for any $t \in (0,t_o)$, there exists a truncation number $\overline{m}$ depending only on $n,K,i,V,\eps, t$ such that for any $m \ge \overline{m}$, for any $(M,g) \in \mathcal{M}(n,K,i,V)$ and any $a\in \mathcal{B}(M,g)$, the map $[\Psi_t^a]^m:M \to \setR^N$ is a smooth embedding satisfying $$1-\eps < |D [\Psi_t^a]^m|_2 < 1+\eps.$$
\end{theorem}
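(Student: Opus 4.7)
The plan is to reduce the statement to two ingredients: a uniform form of Theorem \ref{th:asymptoiso} over $\mathcal{M}(n,K,i,V)$, and a tail estimate quantifying the loss incurred by retaining only the first $m$ eigenfunctions. Setting $g_t := [\Psi_t^a]^*g_{l^2}$ and $g_t^m := ([\Psi_t^a]^m)^*g_{\setR^m}$, formula \eqref{eq:17/08} gives the splitting
$$
[g_t^m]_x(v,v) = [g_t]_x(v,v) - c_n^2 t^{(n+2)/2}\sum_{i>m} e^{-\lambda_i t}(D\phi_i(x) \cdot v)^2,
$$
so it suffices to make each piece differ from $g_x(v,v)$ by at most $\eps\, g_x(v,v)/2$, uniformly for $v \in T_xM$ with $g_x(v,v)=1$.

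For the first term I would prove a uniform version of the expansion \eqref{eq: BBG expansion}, producing $t_o=t_o(n,K,i,\eps)$ such that for every $(M,g) \in \mathcal{M}(n,K,i,V)$, every $a \in \mathcal{B}(M,g)$ and every $t \in (0,t_o)$,
$$
\bigl| [g_t]_x(v,v) - g_x(v,v) \bigr| \le \tfrac{\eps}{2}\, g_x(v,v).
$$
The sketch following Theorem \ref{th:asymptoiso} reduces this to the short-time behaviour of the mixed derivative of $p_t$, which by \eqref{eq:MP} is governed by the Minakshisundaram-Pleijel coefficients $u_0,u_1$ and by derivatives of the volume density $\theta_x$ satisfying \eqref{eq:expRicci}. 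Under the constraints of $\mathcal{M}(n,K,i,V)$, Li-Yau type gradient bounds on the heat kernel and Croke-type comparisons for $\theta_x$ let one control all these terms uniformly in $V$ and in the manifold itself, so the remainder in \eqref{eq: BBG expansion} becomes uniform over the class.

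For the tail I would use uniform $L^\infty$-bounds on eigenfunctions and their gradients in $\mathcal{M}(n,K,i,V)$, of the form
$$
\|\phi_i\|_\infty \le C_1 \lambda_i^{n/4}, \qquad \|\nabla \phi_i\|_\infty \le C_1 \lambda_i^{(n+2)/4},
$$
which follow from Sobolev inequalities under Ricci lower bounds together with elliptic regularity (cf.\ arguments of \cite{BerardBessonGallot}), combined with the lower bound \eqref{eq:BBGbounds}. This gives
$$
c_n^2 t^{(n+2)/2}\sum_{i>m} e^{-\lambda_i t}(D\phi_i(x)\cdot v)^2 \le C_2\, t^{(n+2)/2} g_x(v,v) \sum_{i>m} i^{(n+2)/n} e^{-C_s i^{2/n} t}.
$$
For each fixed $t \in (0,t_o)$ the last series is summable and vanishes as $m \to \infty$, which determines a threshold $\overline{m} = \overline{m}(n,K,i,V,\eps,t)$ ensuring that the tail is at most $\tfrac{\eps}{2}\, g_x(v,v)$ for every $m \ge \overline{m}$.

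Combining the two bounds yields $(1-\eps)g_x \le g_t^m \le (1+\eps)g_x$, which is the desired pinching on $|D[\Psi_t^a]^m|_2$. Smoothness of $[\Psi_t^a]^m$ is immediate from the smoothness of the eigenfunctions, and the pinching already makes the map a smooth immersion; enlarging $\overline{m}$ if necessary to exceed Bates' maximal embedding dimension on $\mathcal{M}(n,K,i,V)$ (available under the hypotheses via the results discussed in Section 2.3) secures injectivity, hence the smooth embedding property via the compactness of $M$. I expect the principal obstacle to be the first step: producing a $V$-free, manifold-uniform remainder in the Bérard-Besson-Gallot expansion while relying only on a Ricci lower bound and an injectivity radius bound, since the classical parametrix methods underlying \eqref{eq:MP} are usually deployed under two-sided sectional curvature bounds.
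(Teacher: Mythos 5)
Your reduction of the truncated statement to (a) a uniform two-sided pinching of the full pull-back metric $g_t$ over $\mathcal{M}(n,K,i,V)$ plus (b) an eigenfunction-tail estimate is sound in outline, and part (b) is essentially right: the sup-norm bounds $\|\phi_i\|_\infty \lesssim \lambda_i^{n/4}$, the gradient bounds $\|\nabla\phi_i\|_\infty \lesssim \lambda_i^{(n+2)/4}$ and the lower bound \eqref{eq:BBGbounds} (available here because $\mathrm{inj}\ge i$ together with $\vol\le V$ forces a diameter bound) do make the tail vanish as $m\to\infty$ for each fixed $t$, and enlarging $\overline m$ by a Bates-type separation dimension handles global injectivity. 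The genuine gap is in part (a), exactly at the point you flag but do not close. You propose to obtain $\bigl|[g_t]_x(v,v)-g_x(v,v)\bigr|\le \frac{\eps}{2}\,g_x(v,v)$ for all $t<t_o(n,K,i,\eps)$ by making the expansion \eqref{eq: BBG expansion} (equivalently \eqref{eq:MP}) uniform over the class via Li--Yau and Croke-type estimates. This cannot work as stated: the class $\mathcal{M}(n,K,i,V)$ carries only a lower Ricci bound and a lower injectivity-radius bound, so there is no pointwise upper control of $\Scal$ or $\Ric$ -- Anderson--Cheeger regularity gives only $C^\alpha\cap W^{1,p}$ control of the metric in harmonic coordinates, and the class contains manifolds with arbitrarily large curvature at points. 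Consequently the first-order term $\frac{t}{3}\left(\frac{\Scal}{2}g-\Ric\right)$ in \eqref{eq: BBG expansion}, and likewise the Minakshisundaram--Pleijel coefficients $u_1(x,x)=\Scal(x)/6$ and $D^{\mathrm{mix}}u_0(x,x)=-\frac16\Ric_x$, are not uniformly bounded over the class, so no term-by-term uniformization of the parametrix can produce a threshold $t_o$ independent of the manifold; Li--Yau controls $\nabla p/p$, not the mixed second derivative of $p$ at the $o(t)$-relative precision with identified order-$t$ coefficient that your argument needs.

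This is precisely why the actual proof (Portegies, as sketched at the end of Section 2.4) never touches pointwise curvature: its key input is a quantitative harmonic radius, namely $C^\alpha$ harmonic coordinate patches of a definite size $\overline r=\overline r(n,K,i,V)$ built from distance functions via Bishop--Gromov and the segment inequality and then harmonically replaced using interior elliptic estimates and a quantitative maximum principle. The two-sided pinching of $g_t$ at times $t\ll \overline r^{\,2}$ is then obtained by comparing the heat kernel and its gradient with the Euclidean model inside these almost-Euclidean charts, working with the $W^{1,p}$-controlled metric rather than with the smooth parametrix expansion. Replacing your Step 1 by such an argument is the missing ingredient; the tail estimate and the injectivity padding in your write-up can stay as they are.
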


The proof of all these results is based on a quantitative construction on the harmonic radius in terms of the parameters $n,K,i,V$. The proof by M.~Anderson and J.~Cheeger was based on a contradiction argument which did not provide such a quantitative estimate. By suitably exploiting the Bishop-Gromov theorem and the segment inequality, one can show that for any $x \in M \in \mathcal{M}(n,K,i,V)$ and any orthonormal basis $\{e_i\}_i$ of $T_xM$, the functions $\{\dist(p_i,\cdot)\}_i$, where $p_i:=\exp_x(e_i/4)$ for any $i\in \{1,\ldots,n\}$, form a coordinate patch satisfying (i), (ii) and (iii)' with domain $B_{\overline{r}}(x)$ where $\overline{r}>0$ depends only on $n,K,i,V$. To turn these coordinates into harmonic ones, one can replace them by their so-called \textit{harmonic replacement}, that is the solution of the Dirichlet problem
$$
\begin{cases}
\Delta h_i = 0 & \text{on $B_{\overline{r}}(x)$}\\
h_i(\cdot)=\dist(p_i,\cdot) & \text{on $\partial B_{\overline{r}}(x)$}.
\end{cases}
$$
Bu using suitable interior elliptic estimates, a uniform bound on the distance functions for manifolds in $\mathcal{M}(n,K,i,V)$, and a quantitative version of the maximum principle, it can be shown that the maps $\{h_i\}_i$ form a harmonic coordinate patch with domain $B_{\overline{r}}(x)$. We refer to \cite[Appendix]{Portegies} for more details.

\subsection{Heat kernel embeddings via the connection Laplacian}

In \cite{SingerWu}, A.~Singer and H.-T.~Wu introduced another family of unrescaled spectral embeddings
$$
V_t^a : M \to l^2
$$
of a closed Riemannian manifold $(M,g)$. Their approach, very similar to the one adopted in \cite{BerardBessonGallot}, relied on the heat kernel of the \textit{connection Laplacian} for vector fields. Let us recall that the connection Laplacian for vector fields on $M$ is the Friedrich extension $\Delta_C : L^2(TM) \to L^2(TM)$ of the symmetric operator $\Delta_C : C_c^\infty(TM) \to C_c^\infty(TM)$ defined by
$$
\Delta_C V := \mathrm{tr}(\nabla^2 V)
$$
for any $V \in C^\infty(TM)$,
where $\nabla^2 V:C^\infty(TM)\times C^\infty(TM) \to C^\infty(TM)$ is the second covariant derivative of $V$, $\mathrm{tr}$ is the trace operator on $(1,2)$ tensors and $C_c^\infty(TM)$ (resp.~$C^\infty(TM)$) is the space of compactly supported smooth (resp.~smooth) vector fields on $M$.

The operator $-\Delta_C$ is a non-negative self-adjoint second-order elliptic operator on $L^2(TM)$ admitting a discrete spectrum $0=\lambda_{0}^C < \lambda_{1}^C \le \lambda_{2}^C \le \ldots \to +\infty$, counted with multiplicity. Moreover, if we denote by $0<\nu_1^C<\nu_2^C<\ldots\to +\infty$ the same spectrum counted without multiplicity, we can decompose $L^2(TM)$ into $\oplus_{k=1}^{+\infty}E_k$, where $E_k$ is the eigenspace of $-\Delta_C$ corresponding to the eigenvalue $\nu_k^C$. We denote by $\cB_C(E_k)$ the set of orthonormal bases of $E_k$, and define $
\cB_C(M,g):=\prod_{k=1}^{+\infty} \cB_C(E_k)$.

The semigroup $(e^{-t\Delta_C})_{t>0}$ generated by $-\Delta_C$ admits a kernel $p_C$ such that for any $x,y \in M$ and $t>0$,
$$p_C(x,y,t):T_yM \to T_xM,$$
i.e.
$$
e^{-t\Delta_C} V(x) = \int_M p_C(x,y,t) V(y)\di \vol(y)$$
for any $x \in M$, $V \in L^2(TM)$ and $t>0$. Moreover, $p_C$ is smooth in $x$ and $y$ and analytic in $t$. Finally, just like the classical heat kernel, $p_C$ admits a spectral decomposition: for any $x,y \in X$ and $t>0$,
\begin{equation}\label{eq:spectralconnection}
p_C(x,y,t) = \sum_{i=0}^{+\infty} e^{-\lambda_i^C t} X_i(x) \otimes X_i^{\sharp}(y)
\end{equation}
holds, where $(X_i)_{i \ge 0} \in \mathcal{B}_C(M,g)$ and $\cdot^{\sharp}$ is the sharp operator on vector fields.

\begin{definition}[Vector Diffusion Maps]\label{def:VDM}

Let $(M,g)$ be a closed Riemannian manifold. The vector diffusion maps of $M$ are the functions 
$$
V_t^{a}\,  : \, \, \,  
\begin{cases}
\,\, M & \to \quad l^2\\
\,\, x & \mapsto \quad \vol(M) (e^{-(\lambda_i^C + \lambda_j^C)t/2}g_x(X_i(x),X_j(x)))_{i,j\ge1}
\end{cases}
$$
where $t>0$ and $a=(X_i)_{i\ge 0} \in \mathcal{B}_C(M,g)$.
\end{definition}

Using the spectral decomposition \eqref{eq:spectralconnection}, one can easily prove that the vector diffusion maps are smooth embeddings and that the quantity
$$
\|V_t^{a}(x)-V_t^{a}(y)\| 
$$
does not depend on the choice of $a \in \cB(M,g)$ for any $x,y \in M$ and $t>0$. Calling it $$\dist_{VDM,t}(x,y),$$ we get distances $\dist_{VDM,t}$ on $M$ called \textit{vector diffusion distances}. The next theorem (\cite[Th.~8.2]{SingerWu}) provides an information on how $\dist_{VDM,t}$ behaves when $t \downarrow 0$:

\begin{theorem}[Short-time behavior of the Vector Diffusion Distances]\label{th:short-time}
Let $(M,g)$ be a closed $n$-dimensional Riemannian manifold. Then for any $x\in M$, there exists $t_o>0$ and $C>0$ such that for all $0<t<t_o$ and all $v \in T_xM$ with $\|v\|^2 \ll t$, setting $y=\exp_x(v)$ one has
$$
|c_nt^{n+1}\dist^2_{VDM,t}(x,y)- \|v\|^2| \le Ct,
$$
with $c_n:=(4\pi)^{n}/n$.
\end{theorem}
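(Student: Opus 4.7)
The plan is to reduce the VDM distance to an expression involving only the Hilbert-Schmidt norm of the connection heat kernel, then apply the Minakshisundaram-Pleijel-type expansion for $p_C$ to extract the leading $\|v\|^2$ behavior.

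\textbf{Step 1: Basis-free formula.} Setting $A_i(x):=X_i(x)$ and expanding the squared difference in the definition of $\dist_{VDM,t}$ gives three sums. Writing $p_C$ as the rank-one expansion \eqref{eq:spectralconnection} and computing Hilbert-Schmidt norms via an orthonormal basis of $T_yM$, one obtains by Parseval that
\begin{equation*}
\sum_{i,j} e^{-(\lambda_i^C+\lambda_j^C)t} g_x(X_i(x),X_j(x))\,g_y(X_i(y),X_j(y)) \;=\; \|p_C(x,y,t)\|_{HS}^2,
\end{equation*}
and analogously for the $x=y$ and $y=y$ terms. Hence
\begin{equation*}
\dist_{VDM,t}^2(x,y) \;=\; \vol(M)^2\!\left[\|p_C(x,x,t)\|_{HS}^2 + \|p_C(y,y,t)\|_{HS}^2 - 2\|p_C(x,y,t)\|_{HS}^2\right],
\end{equation*}
which in particular confirms the basis-independence mentioned before the theorem.

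\textbf{Step 2: Small-time expansion of $p_C$.} For $y$ within the injectivity radius of $x$, the Hadamard parametrix construction for the connection Laplacian yields
\begin{equation*}
p_C(x,y,t) \;=\; \frac{1}{(4\pi t)^{n/2}} e^{-\dist^2(x,y)/4t}\bigl(u_0(x,y) + t\,u_1(x,y) + O(t^2)\bigr),
\end{equation*}
where $u_0(x,y) = \theta_x(y)^{-1/2} P_{xy}$ and $P_{xy}\colon T_yM\to T_xM$ is parallel transport along the minimizing geodesic; the argument mirrors the proof of the claim in Section~2.2, applied to the vector-valued heat equation. Since $P_{xy}$ is a linear isometry, $\|P_{xy}\|_{HS}^2 = n$, and consequently
\begin{equation*}
\|p_C(x,y,t)\|_{HS}^2 \;=\; \frac{1}{(4\pi t)^n}\,e^{-\dist^2(x,y)/2t}\bigl(n\,\theta_x(y)^{-1} + O(t)\bigr),
\end{equation*}
uniformly for $(x,y)$ in a neighborhood of the diagonal.

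\textbf{Step 3: Taylor expansion at $y=\exp_x(v)$.} Substituting $\dist(x,y)=\|v\|$, $\theta_x(x)=1$, using $\theta_x(y)^{-1} = 1 + O(\|v\|^2)$ from \eqref{eq:expRicci}, and $e^{-\|v\|^2/2t} = 1 - \|v\|^2/(2t) + O(\|v\|^4/t^2)$, the bracket in Step~1 evaluates to
\begin{equation*}
\frac{1}{(4\pi t)^n}\left[\frac{n\|v\|^2}{t} + O(\|v\|^2) + O(\|v\|^4/t^2) + O(t)\right].
\end{equation*}
Multiplying by $c_n t^{n+1} = (4\pi)^n t^{n+1}/n$ converts the main term into $\|v\|^2$, and all remaining contributions are $O(t)$ under the regime $\|v\|^2 \ll t$ (with the $\vol(M)^2$ prefactor and higher curvature corrections absorbed into~$C$). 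This gives the stated bound.

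\textbf{Expected obstacle.} The routine computations in Step~3 are straightforward; the genuine technical content sits in Step~2, namely establishing the parametrix expansion for $p_C$ with the identification $u_0 = \theta_x^{-1/2} P_{xy}$ and, crucially, a remainder estimate that is uniform and survives taking Hilbert-Schmidt norms and squaring. One must verify that the $O(t)$ in the asymptotic of $\|p_C\|_{HS}^2$ is genuinely uniform in $(x,y)$ near the diagonal, so that the difference $\|p_C(x,x,t)\|_{HS}^2 - \|p_C(x,y,t)\|_{HS}^2$ produces an error of order $t$ rather than $t \cdot \|v\|^{-k}$; this requires the first-derivative estimates on $u_0,u_1$ near the diagonal that underlie the analogous step in the Bérard-Besson-Gallot proof sketched in Section~2.2.
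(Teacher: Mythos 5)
Your proposal takes essentially the same route as the paper: reduce $\dist^2_{VDM,t}(x,y)$ to Hilbert--Schmidt norms of $p_C$ (the paper's trace formula) via the spectral decomposition \eqref{eq:spectralconnection}, invoke the Minakshisundaram--Pleijel-type expansion \eqref{eq:MP2} for the connection heat kernel with parallel transport as leading term, combine it with \eqref{eq:expRicci}, and Taylor-expand; your Step 3 bookkeeping of the error terms under $\|v\|^2 \ll t$ is correct. The one caveat is your parenthetical claim that the $\vol(M)^2$ prefactor can be "absorbed into $C$": it multiplies the leading term $\|v\|^2$, so it cannot be absorbed into an additive error, but this mismatch is inherited from the survey itself -- Definition \ref{def:VDM} carries the factor $\vol(M)$ while the purely dimensional constant $c_n$ in the theorem (and the paper's own displayed trace formula, which drops the factor) presuppose Singer--Wu's normalization without it, under which your computation goes through verbatim.
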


The proof of this theorem relies on the analogue of the Minakshisundaram-Pleijel expansion for the connection heat kernel (see e.g.~\cite{BerlineGetzlerVergne}): for any $x \in M$ and $v \in T_{x}M$ with $\|v\| \le t \le \mathrm{inj}(M)$, when $t \downarrow 0$, writing $y$ for $\exp_x(v)$ we have
\begin{equation}\label{eq:MP2}
p_C(x,y,t) = \frac{1}{(4\pi t)^{n/2}} e^{-\frac{\|v\|^2}{4t}}\theta_x(\exp_x^{-1}(y))^{-1/2}(\Phi_0(x,y) + t \Phi_1(x,y) + O(t^2)),
\end{equation}
where $\Phi_0(x,y)$ is the parallel transport from $T_xM$ to $T_yM$. Since
\begin{align*}
\dist^2_{VDM,t}(x,y) & = \mathrm{tr}(p_C(x,x,t)p_C(x,x,t)^\sharp) + \mathrm{tr}(p_C(y,y,t)p_C(y,y,t)^\sharp)\\
& -  2\mathrm{tr}(p_C(x,y,t)p_C(x,y,t)^\sharp),
\end{align*}
applying \eqref{eq:MP2} and \eqref{eq:expRicci} yields to the result, thanks to a simple computation.\\

The truncated vector diffusion maps were subsequently studied by C.-Y.~Li and H.-T.~Wu who proved in \cite{LinWu} that for any closed smooth Riemannian manifold $(M,g)$ and any $t>0$, there exists a positive integer $\overline{m}$ such that for any $m \ge \overline{m}$ and any $a \in \mathcal{B}(M,g)$, the map
\[
[V_t^{a}]^{m^2}\,  : \, \, \,  
\begin{cases}
\,\, M & \to \quad \setR^{m^2}\\
\,\, x & \mapsto \quad \vol(M) (e^{-(\lambda_i^C + \lambda_j^C)t/2}g_x(X_i(x),X_j(x)))_{1\le i,j \le m}
\end{cases}
\]
is a smooth embedding. However, a quantitative stamement in the form of Theorem \ref{th:truncated} is still open:\\

\textbf{Open problem 1.}
Let $\mathcal{M}(\lambda_1,\ldots,\lambda_k)$ be a class of (isometry classes of) closed Riemannian manifolds depending on some geometric parameters $\lambda_1, \ldots, \lambda_k$. For any small $\eps>0$, does there exist $t_o>0$ depending only on $\lambda_1, \ldots, \lambda_k, \eps$ such that for any $t \in (0,t_o)$, there exists a truncation number $\overline{m}$ depending only on $\lambda_1, \ldots, \lambda_k,\eps, t$ such that for any $m \ge \overline{m}$, for any $(M,g) \in \mathcal{M}(\lambda_1, \ldots, \lambda_k)$ and any $a\in \mathcal{B}(M,g)$, the map $[V_t^{a}]^{m^2}$ is a smooth embedding satisfying $$1-\eps < |D [V_t^{a}]^{m^2}|_2 < 1+\eps.$$

\hfill

In \cite{Wu}, H.-T.~Wu used the vector diffusion maps to define the so-called \textit{vector spectral distances}.

\begin{definition}[Vector Spectral Distances]
For any $t>0$ and any closed connected smooth Riemannian manifolds $(M,g)$ and $(M',g')$, set
\begin{align*}
\dist_{VDM,t}((M,g),(M',g')):=\max & \left\{
\sup_{a \in \cB(M,g)} \inf_{a' \in \cB(M',g')} \dist_{H,l^2}(V_t^a(M),V_t^{a'}(M')),\right.\\
& \left.\, \, \,  \sup_{a' \in \cB(M',g')} \inf_{a \in \cB(M,g)} \dist_{H,l^2}(V_t^{a'}(M'),V_t^{a}(M))\, \, \, \right\}.
\end{align*}
\end{definition}
\noindent He showed that $\dist_{VDM,t}$ defines a distance on the set of isometry classes of closed Riemanian manifolds for any $t>0$ for which the sets $\mathcal{M}(n,K,D)$ are precompact. A natural question raised by the referee is the following:\\

\textbf{Open problem 2:} Hoes does the vector spectral distances relate to the spectral distances of Definition \ref{def:sd}?\\

As a reasonable first step to answer this question, one could study the relationship between the eigenvalues/eigenfunctions of the Laplace-Beltrami operator with the eigenvalues/eigenvector fields of the connection Laplacian. A result in that direction was found by B.~Colbois and D.~Maerten \cite{ColboisMaerten}: for any closed connected Riemannian manifold $(M^n,g)$ with $\Ric \ge -K(n-1)$ for some $K\ge0$,
\[
\lambda_i^C \le K(n-1) + \lambda_i
\]
for any $i$. However, to the best knowledge of the author, nothing else has been obtained so far.

\subsection{Heat kernel embeddings for possibly non-smooth spaces}

In the recent \cite{AmbrosioHondaPortegiesTewodrose}, the Bérard-Besson-Gallot spectral embeddings have been studied in the context of compact metric measure spaces satisfying the synthetic Riemannian Curvature-Dimension condition $\RCD(K,N)$, where $K \in \setR$ and $N \ge 1$ must be understood as a lower bound on the Ricci curvature and an upper bound on the dimension, respectively. This condition has been under extensive study over the past few years: see for instance the survey \cite{Ambrosio}. We provide here a brief introduction and refer to \cite{TewodrosePhD} for details and references.\\

\textbf{A brief introduction to $\RCD(K,N)$ spaces}

Let $(X,\dist,\meas)$ be a Polish metric measure space, meaning here a Polish (i.e.~complete and separable) metric space equipped with a fully supported Borel regular measure $\meas$ that is finite and non-zero on balls with finite and non-zero radius. Note that the assumption ``$\meas$ fully supported'' can be removed to the prize of technical complications we do not want to enter to here.

 The Cheeger energy of $(X,\dist,\meas)$ is the functional defined on $L^2(X,\meas)$ by setting
\begin{equation}\label{eq:relax2}
\Ch(f)=\inf_{f_n \to f} \left\{ \liminf\limits_{n \to +\infty} \int_X |\nabla f_n|^2 \di \meas \right\} \in [0,+\infty]
\end{equation}
for any $f \in L^2(X,\meas)$, where the infimum is taken over the set of sequences $\{f_n\}_n \subset L^2(X,\meas) \cap \Lip(X)$ such that $\|f_n - f\|_{L^2(X,\meas)} \to 0$, and where for any locally Lipschitz function $F$, the function $|\nabla F|$ -- called slope of $F$ -- is set as
$$
|\nabla F|(x) :=
\begin{cases}
\limsup\limits_{y \to x} \frac{|F(x)-F(y)|}{\dist(x,y)} &  \text{if $x \in X$ is not isolated},\\
\qquad \quad 0 & \text{otherwise}.
\end{cases}
$$
The Cheeger energy must be understood as an abstract extension of the classical Dirichlet energy of $\setR^n$ defined on the Sobolev space $H^1$ by $$\mathrm{Di}(f):= \int_{\setR^n} |\nabla f|^2$$ for any $f \in H^1$. Accordingly, the finiteness domain of $\Ch$ is called (Cheeger metric measure) Sobolev space of $(X,\dist,\meas)$ and usually denoted $$H^{1,2}(X,\dist,\meas).$$

A suitable diagonal argument applied to optimal approximating sequences in \eqref{eq:relax2} provides for any $f \in H^{1,2}(X,\dist,\meas)$ the existence of an $L^2$-function $|\nabla f|_{*}$, called \textit{minimal relaxed slope} of $f$, which gives integral representation of $\Ch$, that is:
$$
\Ch(f)=\int_X |\nabla f|_{*}^2 \di \meas.
$$
The minimal relaxed slope is a local object, meaning that $
|\nabla f|_* = |\nabla g|_*$ $\meas$-a.e.~on $\{f = g\}$ for any $f, g \in H^{1,2}(X,\dist,\meas)$. This, combined with the integral representation property, ensures that $|\nabla f|_*$ is unique as a class of $L^2$-equivalent functions.

When equipped with the norm $\|\cdot\|_{H^{1,2}}:=(\|\cdot\|_{L^{2}}^2 + \Ch(\cdot))^{1/2}$, the space $H^{1,2}(X,\dist,\meas)$ is always a Banach space, but it might fail to be a Hilbert space: this is the case for instance when $(X,\dist,\meas)$ is a smooth non-Riemannian Finsler manifold. In case $H^{1,2}(X,\dist,\meas)$ is a Hilbert space, we say that $$
\text{$(X,\dist,\meas)$ is infinitesimally Hilbertian.}
$$A smooth Riemannian manifold equipped with the canonical Riemannian distance and volume measure is an obvious case of an infinitesimally Hilbertian metric measure space; actually, as shown in \cite{LucicPasqualetto}, a Riemannian manifold equipped with the Riemannian distance and any arbitrary positive Radon measure is always infinitesimally Hilbertian.\\

Let us now recall some basic facts from optimal transport theory. Let $\mathcal{P}(X)$ be the set of probability measures on $X$, $\mathcal{P}_{2}(X)$ the set of probability measures $\mu$ on $X$ with finite second moment, meaning that $\int_X \dist^2(x_o,x) \di \mu(x) < + \infty$ for some $x_o \in X$. We also write $\mathcal{P}_2^{a}(X,\meas)$ for the subset of $\mathcal{P}_2(X)$ made of those measures that are absolutely continuous with respect to $\meas$. The Wasserstein distance between two measures $\mu_{0},\mu_{1} \in \mathcal{P}_{2}(X)$ is by definition
\begin{equation}\label{eq:W}
W_{2}(\mu_{0},\mu_{1}) := \inf_\pi \left( \int_{X \times X} \dist(x_{0},x_{1})^2 \di \pi(x_{0},x_{1})  \right)^{1/2}
\end{equation}
where the infimum is taken among all the probability measures $\pi$ on $X\times X$ with first marginal equal to $\mu_{0}$ and second marginal equal to $\mu_{1}$. Any measure $\pi$ achieving the infimum in \eqref{eq:W} is called optimal coupling between $\mu_0$ and $\mu_1$. A standard result states that if the space $(X,\dist)$ is geodesic (meaning here that two points in $X$ can be joined by a globally distance minimizing curve), then the metric space $(\mathcal{P}_{2}(X),W_{2})$ is geodesic too. Finally, for any $N \in (1,+\infty)$, the $N$-Rényi entropy relative to $\meas$, denoted by $S_{N}(\cdot | \meas)$, is defined by:
\[
S_{N}(\mu | \meas) := - \int_{X} \rho^{1-\frac{1}{N}} \di \meas \qquad \forall \mu \in \mathcal{P}(X),
\]
 where $\mu = \rho \meas + \mu^{s}$ is the Lebesgue decomposition of $\mu$ with respect to $\meas$.\\

We are now in a position to introduce the $\RCD(K,N)$ condition.

\begin{definition}
Let $(X,\dist,\meas)$ be a Polish metric measure space and $K \in \setR$, $N \ge 1$ two parameters.\\

1. \cite{LottVillani,Sturm2006I,Sturm2006II} The space $(X,\dist,\meas)$ is called $\CD(K,N)$ if for any $\mu_0, \mu_1 \in \mathcal{P}^a_2(X,\meas)$ with respective densities $\rho_0,\rho_1$, there exists at least one $W_2$-geodesic $(\mu_t)_{t \in [0,1]}$ and an optimal coupling $\pi$ between $\mu_0$ and $\mu_1$ such that for any $N' \ge N$,
\[
S_{N'}(\mu_t | \meas) \le - \int_{X\times X} [\tau_{K,N'}^{(1-t)}(\dist(x_0,x_1))\rho_0^{-1/N'}(x_0) + \tau_{K,N'}^{(t)}(\dist(x_0,x_1))\rho_1^{-1/N'}(x_1)] \di \pi(x_0,x_1),
\]
where for any $\theta \ge 0$,
$$
\tau_t^{(K,N)} (\theta) :=
\begin{cases}
t^{\frac{1}{N}} \left( \frac{\sinh(t \theta \sqrt{-K/(N-1)})}{\sinh(\theta \sqrt{-K/(N-1)})} \right)^{1-\frac{1}{N}} & \text{if $K<0$,} \\
t & \text{if $K=0$,} \\
t^{\frac{1}{N}} \left( \frac{\sin(t \theta \sqrt{K/(N-1)})}{\sin(\theta \sqrt{K/(N-1)})} \right)^{1-\frac{1}{N}} & \text{if $K>0$ and $0< \theta < \pi \sqrt{(N-1)/K}$,} \\
\infty & \text{if $K>0$ and $ \theta \ge \pi \sqrt{(N-1)/K}$,}
\end{cases}
$$
if $N > 1$, and $\tau_t^{(K,N)}(\theta) = t$ if $N=1$.\\

2. \cite{AmbrosioGigliSavare-Duke} The space $(X,\dist,\meas)$ is called $\RCD(K,N)$ if it is both $\CD(K,N)$ and infinitesimally Hilbertian.
\end{definition}

It is worth mentioning that $\CD(K,N)$ (and thus $\RCD(K,N)$) spaces satisfy the local doubling and Poincaré properties:

\begin{enumerate}
\item[(i)] (local doubling condition) for any $R>0$ there exists $C_D=C_D(K,N,R)>0$ such that for any ball $B$ with radius $r \in (0,R)$,
$$
\meas(2B) \le C_D \meas(B);
$$
\item[(ii)] (local weak $(1,1)$-Poincaré inequality) for any $R>0$ there exists $C_P=C_P(K,N,R)>0$ such that for any $f \in \Lip(X)$ and any ball $B$ with radius $r\in(0,R)$,
$$
\fint_{B} |f - f_{B}| \di \meas \le C_P r  \fint_{2B} |\nabla f| \di \meas.
$$
\end{enumerate}
Here $2B$ is the ball with same center as $B$ but with doubled radius.
%In \cite{BacherSturm}, K.~Bacher and K.~T.~Sturm introduced another curvature-dimension condition $\CD^*(K,N)$ (leading accordingly to the $\RCD^*(K,N)$) by modifying the distortion coefficients $\tau_t^{(K,N)}$ into some $\sigma_t^{(K,N)}$. This modification provides better tensorization and local-to-global properties of the curvature-dimension condition, see \cite{Rajala}, for instance. In any case, F.~Cavaletti and E.~Millman

Moreover, any $\RCD(K,N)$ space has a notion of essential dimension in the sense that there exists a unique integer $\dimnew(X):=n\in [1,N]$ such that
\begin{equation}\label{eq:constant}
\meas(X\setminus \mathcal{R}_n\bigr)=0
\end{equation}
where $\displaystyle \mathcal{R}_n:=\{ x \in X \, : \mathrm{Tan}(X, \dist,\meas,x) =\{ \bigl (\mathbb{R}^k, \dist_{\mathbb{R}^k},\mathcal{L}^k/\omega_k,0\bigr) \}$ is the set of so-called $n$-regular points of $(X,\dist,\meas)$ and for any $x \in X$, $\mathrm{Tan}(X, \dist,\meas,x)$ is the set of tangents to $(X,\dist,\meas)$ at $x$ that is to say the collection of all
pointed metric measure spaces $(Y, \dist_Y,\meas_Y,y)$ such that, as $i\to\infty$, one has
$$
\left(X, \frac{1}{r_i}\dist,\frac{\meas}{\meas (B_{r_i}(x))},x\right) \stackrel{mGH}{\to} (Y, \dist_Y,\meas_Y,y) 
$$
for some infinitesimal sequence $(r_i)\subset (0,\infty)$, where $mGH$ stands for the measured Gromov-Hausdorff convergence. A first step towards this deep structural result was achieved by A.~Mondino and A.~Naber who established in \cite{MondinoNaber}  the a.e.~decomposition
$\meas(X \backslash \bigcup_{k=1}^{\lfloor N \rfloor} \mathcal{R}_k)=0$
of any $\RCD(K,N)$ space $(X,\dist,\meas)$. Several groups of authors \cite{GigliPasqualetto,DePhilippisMarcheseRindler,KellMondino} subsequently refined this result by proving the rectifiability of $\RCD(K,N)$ spaces as metric measure spaces. Building upon this and a careful analysis of the regularity of Regular Lagrangian Flows of Sobolev vector fields, E.~Brué and D.~Semola finally proved \eqref{eq:constant} by a suitable contradiction argument \cite{BrueSemola}.
\\

Finally, let us provide some examples of $\RCD(K,N)$ spaces.
\begin{itemize}
\item Smooth Riemannian manifolds $(M,g)$ equipped with their canonical Riemannian distance $\dist_g$ and volume measure $\vol_g$ with dimension $n$ bounded from above by $N$ and Ricci curvature bounded from below by $K(n-1)$ are $\RCD(K,N)$ spaces. For instance, the unit sphere in $\setR^n$ equipped with the Riemannian metric induced by the ambient Euclidean metric is $\RCD(1,n-1)$. 

\item Weighted smooth $n$-dimensional Riemannian manifolds $(M,\dist_g,e^{-V}\vol_g)$ with $V \in C^2(M)$ satisfying $
\Ric + \, \mathrm{Hess}_V - \frac{1}{N-n}\nabla V \otimes \nabla V \ge K g$ are $\RCD(K,N)$ spaces. For instance,
$([0,\pi],\dist_{eucl},\sin^{N-1}(r)\di r)$ is a $\RCD(1,N)$ space.

\item Two-dimensional cones with angle less than or equal to $2\pi$ are $\RCD(0,2)$ spaces. This is probably the simplest example of non-smooth $\RCD$ space. Elaborated variations on this simple example include cones over $\RCD$ spaces \cite{Ketterer} and stratified spaces \cite{BertrandKettererMondelloRichard}.

\item The graph of any Lipschitz function $f:\setR^n \to \setR$ equipped with the length distance and the $n$-dimensional Hausdorff measure is $\RCD(K,n)$, where $K$ depends on the Lipschitz constant of the function.

\item Two important classes of possibly highly non-smooth metric measure spaces are also $\RCD(K,N)$ spaces. These are the class of Ricci limit spaces and Alexandrov spaces. Spaces in this latter class are metric spaces with a synthetic notion of sectional curvature bounded from below and are naturally endowed with a Hausdorff measure of integer dimension, see \cite{BuragoGromovPerelman,Petrunin}.
\end{itemize}
%For instance, an $n$-dimensional cone of angle less than $2\pi$ is an $\RCD(0,N)$ space. The graph of a Lipschitz function $f:\setR^n \to \setR$ equipped with the length distance and the measure $\leb^n \otimes f_{\#}\leb^n$ in another simple example of $\RCD(K,n+1)$ space for some $K$ depending on the Lispchitz constant of $f$. Ricci limit spaces \cite{}, Alexandrov spaces \cite{} or stratified spaces \cite{}, are other (more elaborated) examples of $\RCD(K,N)$ spaces. We refer to \cite{} for a nice overview on the $\RCD$ theory and brings here only the minimal material to state the results we are interested in.

\hfill

\textbf{Heat kernel of $\RCD(K,N)$ spaces}

Let us recall that a classical gradient flow (in a Hilbert space $H$) is the solution of an ordinary differential equation of type
\begin{equation}\label{eq:gradientflow}
\begin{cases}
& x' = -\nabla F(x)\\
& x(0) = \overline{x}
\end{cases}
\end{equation}
where $F: H \to \mathbb{R}\cup\{+\infty\}$ is a lower semicontinuous function satisfying some regularity assumption (say $C^{1,1}$). In case the only regularity assumption on $F$ is convexity, one can still give a meaning to \eqref{eq:gradientflow} by introducing the subdifferential of $F$, defined as
\[ \partial F(x) := \{ p \in H : \forall y \in H, \, F(y) \ge F(x) + \scal{p}{y-x}_H \}\]
for any $x \in H$. Then we call gradient flow of $F$ starting at $\overline{x} \in H$ any locally absolutely continuous curve $x : (0,+\infty) \to H$ such that 
\begin{equation}\label{eq:gradientflow2}
\begin{cases}
& x'(t) \in - \partial F(x(t)) \qquad \text{for a.e.} \, t \in (0,+\infty)\\
& \|x(t) - \overline{x}\|_{H} \to 0\qquad \, \, \, \text{when $t \to 0$.}
\end{cases}
\end{equation}
The Komura-Brézis theorem \cite{Komura,Brezis,AmbrosioBrueSemola} states that for any $\overline{x}$ in the closure of the finiteness domain of $F$, there exists a unique gradient flow of $F$ starting at $\overline{x}$.

Considering a metric measure space $(X,\dist,\meas)$, it is easily checked that $\Ch$ is convex and lower semicontinuous with respect to the $L^2(X,\meas)$-norm. Therefore, assuming that $(X,\dist,\meas)$ is infinitesimally Hilbert, the Komura-Brézis theorem applies and provides a family of maps $$ P_t : \{\partial \Ch (\cdot) \neq 0 \} \subset L^2(X,\meas) \to L^2(X,\meas), \qquad t >0, $$ defined by $P_t(f):=f(t)$ for any $f \in L^2(X,\meas)$ where $f(\cdot)$ is the gradient flow of $\Ch$ starting from $f$. This family is called heat flow of $(X,\dist,\meas)$ because for any $f$ such that $\partial \Ch (f) \neq 0$, if we set $-\Delta f$ as the element with minimal norm in $\partial \Ch (f)$, it can be shown that
$$
\frac{\di}{\di t} P_tf = - \Delta P_tf
$$
holds for a.e.~$t>0$. Moreover, using the infinitesimal Hilbertianity of the space, one can show that:
\begin{enumerate}
\item the maps $(P_t)_{t>0}$ are linear,

\item the Laplacian $\Delta$ coincides with the linear operator defined through integration by parts:
\begin{align*}
\mathcal{D}(\Delta) := & \{  f \in H^{1,2}(X,\dist,\meas) \, : \, \, \text{there exists} \, \, h=:\Delta f \in L^2(X,\meas) \, \, \text{such that} \\
&  \int_X \langle \nabla f, \nabla g \rangle \di \meas = - \int_X h g \di \meas  \, \, \, \text{for all} \, \, \,  g\in H^{1,2}(X,\dist,\meas) \, \},
\end{align*}

\item the limit $$
\langle \nabla f, \nabla g \rangle:=\lim\limits_{\eps \downarrow 0} \frac{|\nabla (f + \eps g)|_*^2 - |\nabla f|_*^2}{2\eps}$$ defines a symmetric bilinear form on $H^{1,2}(X,\dist,\meas)\times H^{1,2}(X,\dist,\meas)$ taking values in $L^1(X,\meas)$ and
$$
\mathcal{E} (f,g) := \int_X \langle \nabla f, \nabla g \rangle \di \meas
$$
defines a strongly local Dirichlet form with domain $H^{1,2}(X,\dist,\meas)$ such that:
$$\mathcal{E}(f,f)=\Ch(f) \qquad \forall f \in H^{1,2}(X,\dist,\meas).$$
\end{enumerate}

When $(X,\dist,\meas)$ is an $\RCD(K,N)$ space, then $\cE$ is also a regular Dirichlet form whose associated intrinsic distance coincides with $\dist$, see \cite{AmbrosioGigliSavare-Duke}.
Then the works of K.-T.~Sturm on such Dirichlet forms \cite{SturmI, SturmII, SturmIII} provide the existence of a heat kernel for $\Ch$, meaning in this context a locally Lipschitz function $p:X\times X \times (0,+\infty) \to (0,+\infty)$ symmetric with respect to its first two variables such that for any $t>0$ and $f \in L^2(X,\meas)$,
$$P_t f (x) = \int_X p(x,y,t) f(y) \di \meas(y)  \qquad \text{for $\meas$-a.e.~$x \in X$}.$$
Note that Sturm's results hold in the class of metric measure spaces satisfying the doubling and Poincaré properties, so the existence of a heat kernel on $(X,\dist,\meas)$ uses both the $\CD(K,N)$ condition (notably to ensure the validity of these two conditions) and the infinitesimally Hilbertian condition.\\

\textbf{Riemannian metrics on $\RCD(K,N)$ spaces}

A notion of Riemannian metric can be formulated on $\RCD(K,N)$ spaces thanks to the abstract calculus developed by N.~Gigli in \cite{Gigli}. There is shown that on an $\RCD(K,N)$ space $(X,\dist,\meas)$ can be defined:
\begin{itemize}
\item a space of square integrable vector fields $L^2(TX)$ equipped with a natural norm $\|\cdot\|_{L^2(TX)}$ such that any $f \in H^{1,2}(X,\dist,\meas)$ defines an element $\nabla f \in L^2(TX)$ with $\|\nabla f\|_{L^2(TX)} = |\nabla f|_*$,
\item its dual $L^2(T^*X)$,
\item their tensor products $L^2(TX) \otimes L^2(TX)$, $L^2(T^*X) \otimes L^2(T^*X)$, $L^2(TX) \otimes L^2(T^*X)$,
\item a local Hilbert-Schmidt norm $\| \cdot \|_{HS} : L^2(T^*X) \otimes L^2(T^*X) \to L^0(X,\meas)$, where $L^0(X,\meas)$ is the set of $\meas$-measurable functions on $X$.
\end{itemize}
Then a Riemannian metric on $(X,\dist,\meas)$ is by definition a symmetric bilinear form $\bar{g} : L^2(TX) \times L^2(TX) \to L^0(X,\meas)$ that is  $L^\infty(X,\meas)$-linear, meaning that$$\bar{g}(\chi V,W)=\chi\bar{g}(V,W)$$ for any $\chi \in L^\infty(X,\meas)$ and $V,W \in L^2(TX)$, and non-degenerate, that is to say:$$\bar{g}(V,V)>0\quad \text{$\meas$-a.e.~on $\{|V|>0\}$} \quad \text{for all $V \in L^2(TX)$}.$$
Any Riemannian metric $\bar{g}$ can be represented by a unique element $\bar{\mathfrak{g}}$ in $L^2(T^*X) \otimes L^2(T^*X)$ 
singled out by the following property:
$$
\langle \bar{\mathfrak{g}}, \sum_i \chi_i \nabla f_i^1 \otimes \nabla f_i^2 \rangle_{dual} = \sum_i \chi_i \bar{g}(\nabla f_i^1,\nabla f_i^2)
$$
for any finite collection  $\chi_i \in L^\infty(X,\meas)$, $ f_i^1, f_i^2 \in H^{1,2}(X,\dist,\meas)$ where $\langle \cdot, \cdot \rangle_{dual}$ is the duality pairing. Moreover, there exists a unique Riemannian metric $g$ such that
$$
g(\nabla f_1,\nabla f_2) = \langle \nabla f_1, \nabla f_2 \rangle \qquad \text{$\meas$-a.e.}
$$
for any $f_1, f_2 \in H^{1,2}(X,\dist,\meas)$. This metric is called \textit{canonical Riemannian metric} of $(X,\dist,\meas)$.\\

\textbf{Spectral and heat kernel embeddings}

When $(X,\dist,\meas)$ is a compact $\RCD(K,N)$ space, one can show that the linear operator $-\Delta$ has a discrete spectrum $0=\lambda_0<\lambda_1 \le \lambda_2 \le \ldots \to + \infty$, the eigenfunctions of $-\Delta$ all admit a Lipschitz representative, and the heat kernel supports a spectral decomposition just like in the case of smooth Riemannian manifolds:
\begin{equation}\label{eq:spectralRCD}
p(\cdot,\cdot,t) = \sum_{i \ge 0} e^{- \lambda_i t} \varphi_i(\cdot) \varphi_i (\cdot) \qquad \text{in $C(X \times X)$},
\end{equation}
$$
p(\cdot,y,t) = \sum_{i \ge 0} e^{- \lambda_i t} \varphi_i(y) \varphi_i(\cdot) \qquad \text{in $H^{1,2}(X,\dist,\meas)$} \qquad \text{for any $y \in X$.}
$$
Thus the Bérard-Besson-Gallot embeddings can be defined in a direct way on $(X,\dist,\meas)$. Let $\mathcal{B}(X,\dist,\meas)$ be the set of orthonormal basis of $L^2(X,\meas)$ made of normalized eigenfunctions of $-\Delta$ listed in increasing order of corresponding eigenvalues. Then for any $t>0$ and $a=(\phi_i)_{i} \in \mathcal{B}(X,\dist,\meas)$, we can set
$$
\Psi_t^{a}\,  : \, \, \,  
\begin{cases}
\,\, X & \to \quad l^2\\
\,\, \, x & \mapsto \quad c_n t^{(n+2)/4} (e^{-\lambda_i t /2} \phi_i(x))_{i \ge 1}
\end{cases}
$$
with $c_n=\sqrt{2}(4 \pi)^{n/4}$. However, the heat kernel embeddings 
$$
\Phi_t\,  : \, \, \,  
\begin{cases}
\,\, X & \to \quad L^2(X,\meas)\\
\,\, \, x & \mapsto \quad p(x,\cdot,t),
\end{cases}
$$
where $t>0$, are easier to handle in this context. Indeed, the study of $\RCD(K,N)$ spaces often relies on blow-up arguments where the local analysis on $(X,\dist,\meas)$ at $x \in X$ is observed through the behavior of the rescaled spaces $(X,\sqrt{t}^{-1}\dist,\meas(B_{\sqrt{t}}(x))^{-1}\meas, x)$ when $t \downarrow 0$. Working with the heat kernel in this case is especially convenient because of the simple scaling formula $$p_t(x,y,1)=\meas(B_{\sqrt{t}}(x)) p(x,y,t)$$ where $p_t$ is the heat kernel of $(X,\sqrt{t}^{-1}\dist,\meas(B_{\sqrt{t}}(x))^{-1}\meas)$. Note that for any $a=\{\phi_i\}_i \in \mathcal{B}(X,\dist,\meas)$, the spectral decomposition \eqref{eq:spectralRCD} implies
$$
c_n t^{(n+2)/4}\Lambda^a(\Phi_{t/2}(x)) = \Psi_t^{a}(x)
$$
for any $x \in X$ and $t>0$, where $\Lambda^a$ is the isomorphism $L^2(X,\meas) \ni f=\sum_i f_i \phi_i \mapsto \{f_i\}_i \in l^2$, so the properties of $\Phi_t$ can be deduced from those of $\Psi_t^{a}$ and vice-versa.

It follows from the same proof as in the Riemannian case that the maps $\Phi_t$ (and then $\Psi_t^{a}$) are Lipschitz embeddings for any $t>0$. Moreover, when $(X,\dist,\meas)$ is a smooth Riemannian manifold $(M,\dist_g,\vol_g)$, it is easily checked that
\begin{equation}\label{eq:riemform}
\text{$D\Phi_t(x)\cdot v$ coincides with the square integrable function $y \mapsto g_x( \nabla p(\cdot,y,t)(x),v)$}
\end{equation}
for any $x \in X$ and $v \in T_xM$. Writing $v$ as the initial velocity of a smooth curve $\gamma:[0,1]\to M$ emanating from $x$, this writes as
$$
[D\Phi_t(x)\cdot v](\cdot) = \left. \frac{\di}{\di s} \right|_{s = 0} p(\gamma(s),\cdot,t).
$$
This Riemannian formula extends to a first-order differentiation formula in the general $\RCD(K,N)$ setting, see \cite[Prop.~5.2.1]{TewodrosePhD}. Moreover, following the Riemannian observation \eqref{eq:riemform}, one can define pull-back Riemannian metrics on $(X,\dist,\meas)$ induced by the heat kernel embeddings $(\Phi_t)_{t>0}$.

\begin{proposition}[Pull-back metrics]
For any $t>0$, setting
\begin{equation}
g_t (V_1, V_2)(\cdot) := 
\int_X\langle \nabla p(\cdot, y, t), V_1\rangle \langle \nabla p(\cdot, y, t), V_2\rangle\di\meas(y)
\end{equation}
for any $V_1, V_2 \in L^2(TX)$ defines a Riemannian metric on $(X,\dist,\meas)$.
\end{proposition}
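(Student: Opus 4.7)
The plan is to verify the defining properties of a Riemannian metric from Gigli's framework: real bilinearity, symmetry, $L^\infty$-linearity, and non-degeneracy. The main tool throughout will be the spectral decomposition of the heat kernel. Substituting $\nabla p(\cdot,y,t) = \sum_{i\ge 0} e^{-\lambda_i t}\phi_i(y)\nabla \phi_i$ (convergent in $L^2(TX)$ for each fixed $y$, by the $\RCD$ analogue of the Riemannian formula recalled in the introduction) into the defining integral and applying Parseval's identity for the orthonormal basis $\{\phi_i\}_i$ of $L^2(X,\meas)$ yields the clean pointwise reformulation
\begin{equation}\label{eq:gtseries}
g_t(V_1,V_2) = \sum_{i\ge 0} e^{-2\lambda_i t}\,\langle \nabla \phi_i, V_1\rangle\,\langle \nabla \phi_i, V_2\rangle \qquad \text{$\meas$-a.e.}
\end{equation}
This expression is manifestly symmetric and real-bilinear, and well-definedness as an $L^0$-function follows from the series being a sum of non-negative terms whose integral representation is a.e.\ finite (one can bound it by $|V_1||V_2|\sum_i e^{-2\lambda_i t}|\nabla\phi_i|^2$, controlled by a mixed-derivative heat-kernel quantity at time $2t$). $L^\infty$-linearity is immediate from $\langle \nabla \phi_i, \chi V\rangle = \chi \langle \nabla \phi_i, V\rangle$ pointwise.

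The main obstacle is non-degeneracy: proving $g_t(V,V) > 0$ $\meas$-a.e.\ on $\{|V|>0\}$. I argue by contradiction, assuming existence of a measurable set $E$ with $\meas(E)>0$ on which $g_t(V,V) = 0$ while $|V|>0$. Since each summand in \eqref{eq:gtseries} is non-negative, this forces $\langle \nabla \phi_i, V\rangle = 0$ $\meas$-a.e.\ on $E$ for every $i\ge 0$. For arbitrary $f\in H^{1,2}(X,\dist,\meas)$, the spectral expansion $f=\sum_i \langle f,\phi_i\rangle_{L^2}\phi_i$ converges in the $\|\cdot\|_{H^{1,2}}$-norm, because $\Ch(f)=\sum_i \lambda_i \langle f,\phi_i\rangle_{L^2}^2 <\infty$ controls the Cheeger energy of the tails; hence the gradients of the partial sums converge to $\nabla f$ in $L^2(TX)$, and continuity of the pointwise inner product propagates the vanishing to $\langle \nabla f, V\rangle = 0$ $\meas$-a.e.\ on $E$ for every $f\in H^{1,2}(X,\dist,\meas)$.

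To conclude, I invoke Gigli's description of $L^2(TX)$ as the closure (in the appropriate sense) of $L^\infty$-combinations $\sum_j \chi_j \nabla f_j$ with $\chi_j\in L^\infty(X,\meas)$ and $f_j\in H^{1,2}(X,\dist,\meas)$. Combined with $L^\infty$-linearity and $L^2$-continuity of the pointwise inner product in its first argument, the previous step upgrades to $\langle W, V\rangle = 0$ $\meas$-a.e.\ on $E$ for every $W\in L^2(TX)$. Testing against $W:=\chi_E V\in L^2(TX)$ yields $|V|^2\chi_E = 0$ $\meas$-a.e., contradicting $|V|>0$ on $E$. The delicate point here is the appeal to the Gigli generation property of the tangent module, which is precisely where the infinitesimally Hilbertian half of the $\RCD$ hypothesis enters in a non-trivial way; everything else is routine bookkeeping around the spectral expansion.
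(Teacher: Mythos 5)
Your argument is correct, and it is essentially the standard proof (the survey itself states this proposition without proof, deferring to \cite{AmbrosioHondaPortegiesTewodrose}, where the same strategy is used): expand $\nabla p(\cdot,y,t)$ spectrally, use Parseval in $y$ to get $g_t(V_1,V_2)=\sum_i e^{-2\lambda_i t}\langle\nabla\phi_i,V_1\rangle\langle\nabla\phi_i,V_2\rangle$, and derive non-degeneracy from the density of the span of eigenfunctions in $H^{1,2}(X,\dist,\meas)$ together with the fact that gradients generate the tangent module $L^2(TX)$. The only points worth making explicit in a full write-up are the joint measurability in $(x,y)$ needed for the Parseval/Fubini step (handled canonically via the series representative) and the identity $\Ch(f)=\sum_i\lambda_i\langle f,\phi_i\rangle_{L^2}^2$ justifying $H^{1,2}$-convergence of the partial sums, both of which you correctly invoke.
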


Next is one of the main theorems in \cite{AmbrosioHondaPortegiesTewodrose} that brings information on the asymptotic behavior of the heat kernel embeddings when $t \downarrow 0$. Note that the rescaling factor $t^{(n+2)/2}$ in the classical Bérard-Besson-Gallot spectral embeddings leads to two possible rescalings in the $\RCD$ context: $t^{(n+2)/2}$ or $t \meas(B_{\sqrt{t}}(x))$. The second one turns out more natural since it takes into account possible degeneracy
points where the measure might not have an Euclidean like infinitesimal behavior, but we provide a convergence result for both.

\begin{theorem}\label{th:RCDconvergence}
Let $(X,\dist,\meas)$ be a compact $\RCD(K,N)$ space with essential dimension $n$. Set $\hat{g}_t:=t \meas(B_{\sqrt{t}}(\cdot)) g_t$ and $\tilde{g}_t:=t^{(n+2)/2}g_t$ for any $t>0$. Then there exists a dimensional constant $c_n>0$ such that when $t \downarrow 0$,
\begin{enumerate}
\item the weak convergence $\hat{g}_t \to c_ng$ holds in the sense that $\hat{g}_t(V,V) \to c_n g(V,V)$ for any $V \in L^2(TX)$ in the weak topology of $L^1(X,\meas)$,

\item the strong convergence $\hat{g}_t \to c_ng$ holds in the sense that $\lim\limits_{t \to 0} \||\hat{\mathfrak{g}}_t - \mathfrak{g}|_{HS}\|_{L^2} = 0$,

\item the weak convergence $\tilde{g}_t \to c_nF(\cdot)g$ holds, where $F(\cdot)$ is the inverse of the density of $\meas$ with respect to $\haus^n$ (see \cite[Th.~4.1]{AmbrosioHondaTewodrose}),

\item the strong convergence $\tilde{g}_t \to c_nF(\cdot)g$ holds.
\end{enumerate}
\end{theorem}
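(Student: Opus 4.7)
The starting point is a semigroup computation: from $p(x,x,2t) = \int_X p(x,y,t)^2 \di\meas(y)$ and differentiation under the integral one obtains, for any $V \in L^2(TX)$,
$$g_t(V,V)(x) = D^{\mathrm{mix}} p(x,x,2t)(V,V),$$
so the asymptotics of $\hat g_t$ and $\tilde g_t$ are dictated by the small-time behavior of the mixed second derivative of the heat kernel on the diagonal. In the $\RCD(K,N)$ setting the replacement for the Minakshisundaram-Pleijel expansion is the combination of sharp Gaussian upper and lower bounds for $p$ and Gaussian upper bounds for $|\nabla_x p|$ (Sturm, Jiang-Li-Zhang, et al.) with the blow-up behavior at regular points: at $\meas$-a.e.~$x \in \mathcal{R}_n$ the rescaled spaces $(X, t^{-1/2}\dist, \meas(B_{\sqrt{t}}(x))^{-1}\meas, x)$ converge in the pointed-mGH sense to $(\setR^n, \dist_{\setR^n}, \omega_n^{-1}\mathcal{L}^n, 0)$, and the normalization $t \meas(B_{\sqrt{t}}(x))$ is precisely the one making the rescaled kernel $p_t$ with $p_t(x,y,1)=\meas(B_{\sqrt{t}}(x)) p(x,y,t)$ blow up to the Euclidean heat kernel at time~$1$.

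The plan is to prove (1) and (2) first, and to deduce (3) and (4) from them. For (1) I would proceed in two steps. \emph{Pointwise convergence}: $\hat g_t(V,V)(x) \to c_n g(V,V)(x)$ at $\meas$-a.e.~$x \in \mathcal{R}_n$, say for $V=\nabla f$ with $f$ a test function, via the Mosco convergence of Cheeger energies under pointed-mGH convergence of $\RCD$ spaces à la Gigli-Mondino-Savaré, upgraded to stability of gradients of heat kernels, which reduces the computation of the limit to an explicit Gaussian integral in $\setR^n$ yielding the dimensional constant $c_n$. \emph{Equi-integrability}: the Gaussian bound on $|\nabla_x p|$ yields
$$\mathrm{tr}(g_t)(x) = \int_X |\nabla_x p(x,y,t)|^2 \di\meas(y) \le C\, t^{-1} \meas(B_{\sqrt{t}}(x))^{-1},$$
so that the Cauchy-Schwarz bound $g_t(V,V) \le \mathrm{tr}(g_t)\,|V|^2$ gives $\hat g_t(V,V) \le C|V|^2$ uniformly in $t$. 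Vitali then upgrades pointwise a.e.~convergence to weak $L^1$-convergence, proving (1) for the chosen test class; the extension to all $V \in L^2(TX)$ follows by density of $\nabla f$'s in $L^2(TX)$.

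For (2) the strategy is to promote weak to strong $L^2$-Hilbert-Schmidt convergence by verifying convergence of the total HS norm $\||\hat{\mathfrak g}_t|_{HS}\|_{L^2}^2 \to c_n^2 \||\mathfrak g|_{HS}\|_{L^2}^2$; computing this norm by another application of the semigroup identity reduces it to an integrated, hence softer, version of the pointwise analysis of (1). Once (1) and (2) are in hand, (3) and (4) follow from the identity
$$\tilde g_t = \frac{t^{n/2}}{\meas(B_{\sqrt{t}}(\cdot))}\, \hat g_t$$
combined with the $\meas$-a.e.~convergence $\omega_n\, t^{n/2}/\meas(B_{\sqrt{t}}(x)) \to F(x)$ at regular points, which is the content of the cited Theorem~4.1 of \cite{AmbrosioHondaTewodrose}. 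The multiplicative factor is locally uniformly controlled by Bishop-Gromov, so the passage to the limit survives in both the weak and strong $L^2$ topologies, after absorbing the factor $\omega_n^{-1}$ into the dimensional constant.

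\textbf{Main obstacle.} The delicate step is the pointwise a.e.~convergence at regular points in the proof of (1). It is not enough to know that the rescaled metric measure spaces converge in the pointed-mGH sense and that the corresponding Cheeger energies Mosco-converge: one must pass to the limit under \emph{gradients} of heat kernels on a \emph{varying} family of spaces, with a priori Gaussian control that is uniform along the rescaling and that survives restriction to a set of full measure. This requires the full strength of the recent fine regularity theory of $\RCD(K,N)$ spaces—Mondino-Naber rectifiability, the Brué-Semola essential dimension theorem, and the analytic properties of the heat flow developed in \cite{AmbrosioHondaTewodrose} and its companions—and is in my view by far the main technical heart of the theorem.
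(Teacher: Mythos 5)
Your proposal is correct in outline and follows essentially the same route as the paper: blow-up at $n$-regular points so that the rescaled kernels (normalized by $t\,\meas(B_{\sqrt{t}}(\cdot))$) converge to the Euclidean heat kernel, the Jiang--Li--Zhang Gaussian bounds for tail control and equi-integrability, reduction of the limit to an explicit Gaussian integral producing $c_n$, the test class $V=\nabla f$ plus density, and then (3)--(4) via the a.e.\ convergence of $t^{n/2}/\meas(B_{\sqrt{t}}(\cdot))$ from \cite{AmbrosioHondaTewodrose}. The only difference is organizational: the paper reduces the weak $L^1$ statement via Vitali--Hahn--Saks/Dunford--Pettis to convergence of integrals over Borel sets and, after Fubini, performs the blow-up at the base point $y$ with an $L\sqrt{t}$ near/far splitting, rather than proving pointwise a.e.\ convergence of $\hat{g}_t(V,V)(x)$ and invoking domination as you do.
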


On a Riemannian manifold $(M,g)$, the Riemannian distance between two points $x$ and $y$ is set as
\begin{equation}\label{eq:dist}
\dist_g(x,y):=\inf \left\{\int_0^1 \sqrt{g_{\gamma(t)}(\gamma'(t),\gamma'(t))} \di t : \gamma \in \mathrm{Adm}(x,y)\right\}
\end{equation}
where $\mathrm{Adm}(x,y)$ is the set of $C^1$ maps $\gamma : [0,1] \to M$ such that $\gamma(0)=x$ and $\gamma(1)=y$. When $M$ is compact, a simple proof based on the Arzelà-Ascoli theorem shows that if $\{g_t\}_{t>0}$ is a family of smooth Riemannian metrics converging uniformly to $g$ as $t \downarrow 0$, then $\dist_{g_t} \to \dist_g$ pointwise. On a compact $\RCD(K,N)$ space, the picture is different because, as far as the author knows, no notion of a vector field along a curve exists yet in this context, hence \eqref{eq:dist} must be given an appropriate meaning which is still missing. For this reason, to turn a Riemannian metric on a compact $\RCD(K,N)$ space into a distance remains a problem whose solving may help tackling the following one:\\

\textbf{Open problem 3.} Can one turn the convergence results for Riemannian metrics of Theorem \ref{th:RCDconvergence} into convergence results for suitably associated distances?\\

It is very natural to ask how sensitive the heat kernel embeddings might be to measured Gromov-Hausdorff perturbations of the space $(X,\dist,\meas)$. The next theorem (\cite[Th.~5.19]{AmbrosioHondaPortegiesTewodrose}) provides an answer to this question.

\begin{theorem}
Let $\mathrm{CRCD}(K,N)$ be the set of compact $\RCD(K,N)$ spaces equipped with the mesured Gromov-Hausdorff topology and $\mathrm{CMS}$ the set of compact metric spaces equipped with the Gromov-Hausdorff distance. Then
$$
F\,  : \, \, \,  
\begin{cases}
\,\, \mathrm{CRCD}(K,N) \times (0,+\infty) & \to \quad \mathrm{CMS}\\
\qquad \quad ((X,\dist,\meas),t) & \mapsto \quad (\Phi_t(X),\dist_{L^2(X,\meas)})
\end{cases}
$$
is a jointly continuous map.
\end{theorem}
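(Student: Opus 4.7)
The plan is to prove joint continuity sequentially: fix $((X_k,\dist_k,\meas_k),t_k) \to ((X_\infty,\dist_\infty,\meas_\infty),t_\infty)$ in $\mathrm{CRCD}(K,N) \times (0,+\infty)$ and show Gromov-Hausdorff convergence of the image. Using the extrinsic realization of measured Gromov-Hausdorff convergence, I would isometrically embed all $X_k$ and $X_\infty$ into a common compact metric space $(Z,\dist_Z)$ so that $\eta_k := \dist_{H,Z}(X_k,X_\infty) \to 0$ and the pushforwards of $\meas_k$ converge weakly to the pushforward of $\meas_\infty$ on $Z$. The key algebraic identity, obtained from the spectral decomposition \eqref{eq:spectralRCD} together with the semigroup property $\int_X p(x,z,t)p(y,z,t)\di\meas(z) = p(x,y,2t)$, is
\begin{equation}\label{eq:key-id}
\|\Phi_{t}(x) - \Phi_{t}(y)\|^2_{L^2(X,\meas)} = p(x,x,2t) + p(y,y,2t) - 2p(x,y,2t),
\end{equation}
which reduces the control of $L^2$-distances between points of $\Phi_t(X)$ to the control of values of the heat kernel at time $2t$.

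The main analytic input is the \emph{local uniform convergence of heat kernels} under mGH convergence of compact $\RCD(K,N)$ spaces: for any compact $[a,b] \subset (0,+\infty)$ and any sequences $x_k,y_k \in X_k$ with $x_k \to x_\infty$, $y_k \to y_\infty$ in $Z$ and $t_k \to t_\infty \in [a,b]$, one has $p_k(x_k,y_k,t_k) \to p_\infty(x_\infty,y_\infty,t_\infty)$, uniformly in such triples. This follows from Sturm's two-sided Gaussian heat kernel estimates (uniform in the class $\RCD(K,N)$ via the local doubling and Poincaré properties (i) and (ii)), which give a uniform modulus of continuity, combined with Mosco convergence of the Cheeger energies under mGH convergence, which yields pointwise convergence of the heat flows. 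Applying this with $t=2t_k\to 2t_\infty>0$ one obtains uniform convergence of $(x,y)\mapsto p_k(x,y,2t_k)$ to $(x,y)\mapsto p_\infty(x,y,2t_\infty)$ in the sense above.

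With this in hand I would define, for each large $k$, a map $\psi_k : X_k \to X_\infty$ by choosing $\psi_k(x)$ to be a nearest point to $x$ in $X_\infty \subset Z$, so $\dist_Z(x,\psi_k(x)) \le \eta_k$, and then set $\hat{\psi}_k(\Phi_{t_k}(x)) := \Phi_{t_\infty}(\psi_k(x))$. Combining \eqref{eq:key-id} applied on both $X_k$ (at time $t_k$) and $X_\infty$ (at time $t_\infty$) with the uniform heat kernel convergence above yields
\begin{equation*}
\sup_{x,y\in X_k} \bigl|\,\|\Phi_{t_k}(x)-\Phi_{t_k}(y)\|_{L^2(\meas_k)} - \|\hat{\psi}_k(\Phi_{t_k}(x))-\hat{\psi}_k(\Phi_{t_k}(y))\|_{L^2(\meas_\infty)}\,\bigr| =: \eps_k \to 0.
\end{equation*}
Since $\psi_k(X_k)$ is $\eta_k$-dense in $X_\infty$ and $\Phi_{t_\infty}$ is Lipschitz (with Lipschitz constant controlled by $t_\infty>0$ via Sturm's gradient estimates for the heat kernel), the image $\hat{\psi}_k(\Phi_{t_k}(X_k))$ is $C\eta_k$-dense in $\Phi_{t_\infty}(X_\infty)$. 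Thus $\hat{\psi}_k$ is an $\eps'_k$-isometry with $\eps'_k\to 0$, giving the desired Gromov-Hausdorff convergence. The main obstacle is the local uniform heat kernel convergence, as it requires combining, in a quantitative fashion, the uniform Gaussian bounds, the resulting equicontinuity of $\{p_k\}$, and the Mosco convergence of the Cheeger energies to identify the pointwise limit.
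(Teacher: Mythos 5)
Your proposal is essentially the argument of the original reference: the survey itself states this result only with the citation \cite[Th.~5.19]{AmbrosioHondaPortegiesTewodrose} and gives no proof, and the proof there rests exactly on your two ingredients, namely the identity $\|\Phi_t(x)-\Phi_t(y)\|_{L^2}^2=p(x,x,2t)+p(y,y,2t)-2p(x,y,2t)$ and the local uniform convergence of heat kernels along measured Gromov-Hausdorff convergence (obtained from the Gaussian bounds of Jiang--Li--Zhang together with the Mosco convergence of Cheeger energies à la Gigli--Mondino--Savaré), followed by the standard almost-isometry argument in a common ambient space. Your sketch is correct, with the only point deserving care being the passage from pointwise to uniform heat kernel convergence, which follows from equicontinuity and compactness of the ambient realization exactly as you indicate.
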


\begin{remark}
It must be underlined that $(\Phi_t(X),\dist_{L^2(X,\meas)}) \stackrel{\dist_{GH}}{\to} (X,\dist)$ when $t\downarrow 0$ is still unknown. This is related to the issues raised before Open Problem 3.
\end{remark}

Finally, let us point out that a truncated and quantitative version of Theorem \ref{th:RCDconvergence} holds in the context of \textit{non-collapsed} $\RCD(K,N)$ spaces which are, by definition, $\RCD(K,N)$ spaces $(X,\dist,\meas)$ with $\meas=\haus^{\lfloor N \rfloor}$: see \cite[Th.~6.9]{AmbrosioHondaPortegiesTewodrose}.\\

Let us now sketch the proof of $\textit{(1)}$ in Theorem \ref{th:RCDconvergence}. Take $V \in L^2(TX)$. We must show
$$
\hat{g}_t(V,V) \xrightharpoonup{L^1} c_n g(V,V).
$$
By the Vitali-Hahn-Saks and Dunford-Pettis theorems, this amounts to showing
$$
\int_A\hat{g}_t(V,V) \di \meas \to c_n \int_A g(V,V) \di \meas
$$
for any Borel set $A \subset X$.
But this is a consequence of proving
\begin{equation}\label{eq:toprove}
\int_{A_1} \int_{A_2} t \meas(B_{\sqrt{t}}(x)) \langle \nabla p(\cdot,y,t)(x),V(x)\rangle^2 \di \meas(x) \di \meas(y) \to c_n \int_{A_1 \cap A_2} g(V,V) \di \meas
\end{equation}
for any Borel sets $A_1, A_2 \subset X$, as revealed by taking $A_1=X$ and $A_2=A$ and using Fubini's theorem.
\begin{comment}
$$
\int_{A_1} \int_{A_2} t \meas(B_{\sqrt{t}}(\cdot)) \langle \nabla p(\cdot,y,t),V\rangle^2 \di \meas(\cdot) \di \meas(y)$$

\begin{equation}\label{eq:to prove} \to c_n \int_{A_1 \cap A_2} g(V,V) \di \meas
\end{equation}
\end{comment}
For $L>0$ and $t>0$ fixed, we split $A_2$ into two parts:
\begin{align*}
& \int_{A_1} \int_{A_2} t \meas(B_{\sqrt{t}}(x)) \langle \nabla p(\cdot,y,t)(x),V(x)\rangle^2 \di \meas(x) \di \meas(y)\\
& =\quad  \int_{A_1} \int_{A_2\cap B_{L\sqrt{t}}(y)} \ldots  \quad + \quad \int_{A_1} \int_{A_2\backslash B_{L\sqrt{t}}(y)} \ldots \quad \cdot
\end{align*}
Using the sharp Gaussian estimates on the heat kernel established by R.~Jiang, H.~Li and H.-C.~Zhang in \cite{JiangLiZhang}, we get
\begin{equation}\label{eq:21}
\left|\int_{A_1} \int_{A_2\backslash B_{L\sqrt{t}}(y)} \ldots \quad \right| \le C(L)
\end{equation}
where $C(L) \to 0$ when $L\to +\infty$. Thus all the relevant information is contained in the other part of the integral. To deal with this latter, set $\dist_t := \sqrt{t}^{-1} \dist$ and $\meas_t:=\meas(B_{\sqrt{t}}(z))^{-1} \meas$. Let $\omega_n$ be the volume of the unit ball in $\setR^n$ equipped with the Lebesgue measure, and $\hat{\haus}^n:=\haus^n/\omega_n$. Then the idea is to replace $V$ by $\nabla f$ for a suitable function $f\in H^{1,2}$ chosen so that for all $y \in \mathcal{R}_n$, the rescalings
$$
f_{\sqrt{t},y} = \frac{1}{\sqrt{t}}( f-(f)_{\sqrt{t},y}) \in H^{1,2}(X,\dist_t,\meas_t),
$$
where $(f)_{\sqrt{t},y}$ denotes the $\meas$-mean-value of $f$ over the ball $B_{\sqrt{t}}(y)$, converge in a suitable sense to a Lipschitz and harmonic function $\hat{f}$ on $(\setR^n,\dist_{eucl},\hat{\haus}^n)$ such that
$$
\sum_{i=1}^n \left|\frac{\partial \hat{f}}{\partial x_j} \right|^2 = g(V,V)^2(y).
$$
Then if $\hat{p}_e$ is the heat kernel of $(\setR^n,\dist_{eucl},\hat{\haus}^n)$, we get
\begin{align*}
& \int_{B_{L \sqrt{t}}(y)} t \meas(B_{\sqrt{t}}(x)) \langle \nabla p(\cdot,y,t)(x), \nabla f(x) \rangle^2 \di \meas(x)
 \\
= \quad & \int_{B_L^{\dist_t}(y)} \meas_t(B_1^{\dist_t}(x)) \langle \nabla p_{\sqrt{t}}(\cdot,y,1)(x), \nabla f_{\sqrt{t},y}(x) \rangle^2 \di \meas_t(x)\\
\xrightarrow{t \downarrow 0} \, \, \, & \int_{B_L(0_n)} \hat{\haus}^n(B_1(x)) \langle \nabla \hat{p}_e(\cdot,0_n,1)(x), \nabla \hat{f}(x) \rangle^2 \di \hat{\haus}^n(x) \\
= \quad &  \, \, c_n(L) \sum_{j=1}^n \left| \frac{\partial \hat{f}}{\partial x_j} \right|^2 = c_n(L) \,  g(V,V)^2(y)
\end{align*}
with $c_n(L) \to c_n$ when $L \to +\infty$. We obtain in a similar manner
$$
\int_{A_2 \cap B_{L\sqrt{t}}(y)} t \meas(B_{\sqrt{t}}(x)) \langle \nabla p(\cdot,y,t)(x),V(x)\rangle^2 \di \meas(x) \xrightarrow{t \downarrow 0}  c_n(L) g(V,V)(y) 1_{A_2}(y)
$$
for $\meas$-a.e.~$y \in X$, unifomly in $y$.  Therefore, the convergence is preserved when we integrate with respect to $y \in A_1$, thus
$$
\int_{A_1} \int_{A_2\cap B_{L\sqrt{t}}(y)} t \meas(B_{\sqrt{t}}(x)) \langle \nabla p(\cdot,y,t)(x),V(x)\rangle^2 \di \meas(x) \di \meas(y)
$$
$$
\xrightarrow{t \downarrow 0} c_n(L) \int_{A_1\cap A_2} g(V,V) \di \meas. 
$$
This combined with \eqref{eq:21} implies \eqref{eq:toprove} by letting $L$ tend to $+\infty$.

\section{Applications to data analysis}

In this section, we present two manifold learning algorithms that are based on the spectral embedding theorems described in the previous section. A manifold learning algorithm takes in input a data set represented as a point cloud in $\setR^D$ and gives in output a lower dimensional representation of the data set, provided the original point cloud $\{x_1,\ldots,x_N\} \subset \setR^D$ lies on (or near) a smooth submanifold $M$ of $\setR^D$. Note that in this setting, only the original point cloud is known: the submanifold $M$ as well as its dimension are unknown a priori.

A common feature of these algorithms is the construction of a weighted graph $(\cV,\cE,w)$ from the point cloud $\{x_1,\ldots,x_N\}$ and the study of the eigenvalues and eigenvectors of suitable operators on this graph.

Let us fix some notation for this section. $M$ is a $d$-dimensional submanifold of $\setR^D$ that we may sometimes explicitely assume to be closed. $M$ is equipped with the Riemannian metric inherited from the ambiant Euclidean metric, and we denote by $\dist$, $\vol$ and $\Delta$ the associated canonical distance, volume measure and Laplace-Beltrami operator, respectively.

\begin{comment}
The choice of the weight depends on the algorithm, but the graph $(\cV,\cE)$ is always constructed in the same way described in the next paragraph.\\

\textbf{Usual graph procedure}

Let $X:=\{A_1,\ldots,A_N\}$ be a collection of objects equipped with a so-called similiraty function $k: X \times X \to [0,+\infty)$ such that $k(A,A)=0$ and $k(A,A')=K(A',A)>0$ for any $A,A' \in X$. Set $\cV := X$ and construct $\cE$ by means of one of the two following options.
\begin{enumerate}
\item Choose $\eps>0$ and define $\cE$ as the collection of couples $(A_i,A_j) \in \cV \times \cV$ such that $k(A_i,A_j)\le \sqrt{\eps}$.

\item Choose an integer $n$ between $1$ and $N$ and define $\cE$ as the collection of couples $(A_i,A_j) \in \cV \times \cV$ such that for any $i$, the value $k(A_i,A_j)$ is among the $n$ smallest values of the set $\{k(A_i,A) \, : \, A \in X\}$.
\end{enumerate} 
We write $i\sim j$ as a shorthand for $(x_i,x_j) \in \cE$ and set $\deg(i)$ as the number of points $x_j$ such that $i \sim j$ -- $\deg(i)$ is usually called the degree of $x_i$.\\

Note that we always implicitly assume $(\cV,\cE)$ to be connected; if this is not the case, one can perform the algorithm for each connected component.
\end{comment}

\subsection{Laplacian Eigenmaps}

The first algorithm we present is due to M. Belkin and P. Niyogi \cite{BelkinNiyogi}.\\

\textbf{The setting}

Let $x_1,\ldots,x_N \subset \setR^D$ be lying on a smooth $d$-dimensional submanifold $M$ of $\setR^D$. Let $(\cV,\cE)$ be the graph constructed from $X:=\{x_1,\ldots,x_N\}$ by setting $\cV:=X$ and building $\cE$ by means of one of the two following options:
\begin{enumerate}
\item choose $\eps>0$ and define $\cE$ as the collection of couples $(x_i,x_j) \in \cV \times \cV$ such that $\|x_i - x_j \|_{\setR^D}\le \sqrt{\eps}$,

\item choose an integer $n$ between $1$ and $N$ and define $\cE$ as the collection of couples $(x_i,x_j) \in \cV \times \cV$ such that for any $i$, the point $x_j$ is among the $n$ nearest neighbors of $x_i$, i.e.~the value $\|x_i - x_j \|_{\setR^D}$ is among the $n$ smallest values of the set $\{\|x_i - x \|_{\setR^D} \, : \, x \in X\}$.
\end{enumerate} 

Write $i\sim j$ as a shorthand for $(x_i,x_j) \in \cE$ and set $\deg(i)$ as the degree of the point $x_i$ that is the number of points $x_j$ such that $i \sim j$. Assume $(\cV,\cE)$ to be connected: if this is not the case, the algorithm can be performed on each connected component.\\

\textbf{A rough explanation}

A classical definition of the Laplacian on a graph $(\cV,\cE)$ is obtained by mimicking the property of the classical Euclidean Laplacian to measure the difference between a function and its mean-value on small balls: by Taylor's expansion, any function $f \in C^2(\setR^n)$ is such that
$$
f(x) - \fint_{B_r(x)} f = c_n r^2 \Delta f(x) + o(r^2), \qquad r \downarrow 0,
$$
for any $x \in \setR^n$, with $c_n=(2n+4)^{-1}$. Then the Laplacian on $(\cV,\cE)$ is usually defined by setting
$$
\Delta_{(\cV,\cE)}f(x_i):=f(x_i)-\frac{1}{\deg(i)} \sum_{j\sim i} f(x_j)
$$
for any $f:\cV \to \setR$ and $x_i \in \cV$. The operator $\Delta_{(\cV,\cE)}$ is sometimes called \textit{normalized} Laplacian of $(\cV,\cE)$, in opposition to the \textit{unnormalized} one defined by
$$
\Delta_{(\cV,\cE)}'f(x_i):=\deg(i) f(x_i)- \sum_{j\sim i} f(x_j).
$$
The addition of a weight $w$ to $(\cV,\cE)$ modifies the geometry of the graph in the sense that it measures the proximity between points: if $w(i,j)$ is big, then $x_i$ and $x_j$ must be understood as close, while if $w(i,k)$ is small, $x_i$ and $x_k$ must be understood as far apart. Then the contribution of $f(x_j)$ to the analogue of $\Delta_{(\cV,\cE)}f(x_i)$ in this weighted context should be more important than the contribution of $f(x_k)$. This is reflected in the following definitions:
$$
\Delta_{(\cV,\cE,w)}f(x_i)=f(x_i)-\frac{1}{\dist(i)} \sum_{j\sim i} w(i,j)f(x_j)
$$
and 
$$
\Delta_{(\cV,\cE,w)}' f(x_i)=\dist(i) f(x_i)- \sum_{j\sim i} w(i,j)f(x_j)
$$
for any $f:\cV \to \setR$ and $x_i \in \cV$, where $\dist(i):=\sum_{j \sim i} w(i,j)$.

Let us anticipate on the next paragraph and point out that from this perspective, the operators $L_t$ considered by M.~Belkin and P.~Niyogi are the normalized weighted Laplacians of the weighted graphs $(\cV,\cE,w_t)$ where
\begin{equation}\label{eq:w}
w_t(i,j):=
\begin{cases}
\,\,e^{-\frac{\|x_i - x_j\|^2_{\setR^D}}{t}} & \text{if $i \sim j$},\\
\qquad \,\, \, 0 & \text{otherwise},
\end{cases}
\end{equation}
for any $x_i,x_j \in \cV$. Let us explain the choice of this weight. Recall that $M$ is a $d$-dimensional sublmanifold of $\setR^D$. As well-known, the Laplace-Beltrami operator on $M$ can be expressed in terms of the heat kernel: for any $f \in C^2(M)$ and $x \in M$,
\[
\Delta_M f (x) = \left. \frac{\partial}{\partial t}\right|_{t=0} \int_M p(x,y,t) f(y) \di \vol(y),
\]
so we can make the rough approximation
\[
\Delta_M f (x) \approx \frac{1}{t} \left(f(x) - \int_M p(x,y,t) f(y) \di \vol(y) \right)
\]
for $t>0$ sufficiently small. When $t \to 0$, the heat kernel $p(x,\cdot,t)$ tends to the Dirac mass at $x$. In particular, it localizes so strongly that for some small $\eps>0$, one can consider as negligible the values of $p(x,\cdot,t)$ outside the ball $B_\eps(x)$. By the Minakshisundaram-Pleijel expansion and the simple observation
\begin{equation}\label{eq:restriction}
\dist(x,y) = \|x-y\|_{\setR^D} + o( \|x-y\|_{\setR^D}), \qquad y \to x,
\end{equation}
which is a consequence of $M$ being equipped with the restriction of the Euclidean metric, we get that $p(x,\cdot,t)$ can be approximated by the Gaussian term
\[
\frac{1}{(4\pi t)^{d/2}} e^{-\frac{\|x-\cdot\|_{\setR^D}^2}{4t}}1_{B_\eps(x)}(\cdot).
\]
Thus we may write
$$
\Delta_M f(x) \approx \frac{1}{t} \left(f(x) - \frac{1}{(4\pi t)^{d/2}} \int_{B_\eps(x)} e^{-\frac{\|x-y\|_{\setR^D}}{4t}} f(y) \di \vol(y) \right).
$$
Now if $\{x_1,\ldots,x_N\}$ is a point cloud lying on $M$, defining the associated graph $(\cV,\cE)$ by choosing, for instance, the first option to construct $\cE$ with the above parameter $\eps$, at each point $x_i \in \cV$ we can approximate the integral in the previous expression by a Riemann sum over the neighbors of $x_i$:
$$
\Delta_M f(x_i) \approx \frac{1}{t} \left(f(x_i) - \frac{1}{(4\pi t)^{d/2}} \frac{1}{\deg(x_i)} \sum_{j \sim i} e^{-\frac{\|x_i-x_j\|_{\setR^D}}{4t}}f(x_j) \right).
$$
Since $d$ is unknown a priori, one may replace the dimensional coefficients $(4\pi t)^{d/2}\deg(x_i)$ by unknown varying coefficients $\alpha_i$. In order to have an operator vanishing on constant functions, we must choose $\alpha_i=\deg(x_i)^{-1}\sum_{j \sim i} e^{-\|x_i-x_j\|_{\setR^D}^2(4t)^{-1}}$ for any $i$. This finally leads to
$$
\Delta_M f(x_i) \approx \frac{1}{t} L_tf(x_i).
$$
This suggests a correspondance between the eigenvalues $\lambda_i$ and eigenfunctions $\phi_i$ of $\Delta_M$ and the eigenvalues $\lambda_i^t$ and eigenvectors $\phi_i^t$ of $L_t$ of the following type:
$$
\lambda_i \approx \frac{1}{t}\lambda_i^t \qquad \text{and} \qquad \phi \approx \phi_i^t.
$$
\hfill

\textbf{The algorithm}

Let $t>0$ be a parameter. Define $w_t$ accroding to \eqref{eq:w}. Let $W_t$ be the $N \times N$ matrix whose $(i,j)$-th entry is $w_t(i,j)$ and $D_t$ the diagonal square matrix of same size as $W_t$ with $i$-th diagonal term defined as $d_t(i):=\sum_{j\sim i} w_t(i,j)$; note that $d_t(i)$ is sometimes called \textit{weighted degree} of $x_i$. Set
\[
L_t:= I_N - D_t^{-1}W_t
\]
where $I_N$ is the $N\times N$ identity matrix, and note that for any $v=(v_1,\ldots,v_N)$, the $i$-th coordinate of the vector $L_tv$ is
\[
v_i - \frac{1}{d_t(i)}\sum_{j \sim i} w_t(i,j)v_j.
\]
Since $L_t$ is equivalent to a symmetric matrix: $$L_t=D_t^{-1/2}(I-D_t^{-1/2}W_tD_t^{-1/2})D_t^{-1/2},$$
then it admits eigenvalues that we list in increasing order:
\[
\lambda_0^t \le \lambda_1^t \le \ldots \le \lambda_{N-1}^t.
\]
Note that $\lambda_0^t=0$ corresponds to the eigenspace generated by the constant vector $(1,\ldots,1)$.

For any $i$ between $1$ and $N_1$, write $\phi_i^t$ for the normalized eigenvector of $L_t$ corresponding to the eigenvalue $\lambda_i^t$ -- here by normalized we mean that $\|\phi_i^t\|_{\setR^N}=1$. Then the Laplacian Eigenmaps of the data point cloud are the embeddings $\Phi_m^t : X \to \setR^m$, where $1 \le m \le N-1$, defined by
\[
\Phi_m^t(x) = (\phi_1^t(x),\ldots,\phi^t_m(x))
\]
for any $x \in X$. The Laplacian Eigenmaps may be seen as the discrete counterpart of the truncated Bérard-Besson-Gallot embeddings \eqref{eq:truncatedembedding}. As so they provide, for $t$ small enough, a faithful representation of the point cloud into a low dimensional Euclidean space. 

\subsection{Convergence of the Laplacian Eigenmaps}

In \cite{BelkinNiyogi2}, M.~Belkin and P.~Niyogi studied the behavior of their algorithm when $N$ goes to $+\infty$ by replacing the fixed data points $x_1,\ldots,x_N$ by random variables $X_1, \ldots, X_N$. They proved that in this case, the eigenvalues and eigenvectors appearing in their algorithm converge to those of minus the rescaled Laplace-Beltrami operator $-\underline{\Delta}:=-\Delta/\vol(M)$, provided $M$ is closed, what we assume from now on.

Let $(\Omega,\mathcal{A},\mathbb{P})$ be a fixed probability space. All the random variables considered in the sequel have domain $\Omega$. We recall that a sequence of real-valued random variables $\{Y_i\}_{i\ge 1}$ converges in probability to another real-valued random variable $Y$ if $\mathbb{P}(|Y_i-Y|\ge\eps)\to 0$ for any $\eps>0$, and that it converges almost surely to $Y$ if there exists a $\mathbb{P}$-negligible set $N \subset \Omega$ such that $Y_n(\omega) \to Y(\omega)$ for any $\omega \in \Omega \backslash N$.

Let $\{X_i\}_{i\ge 1}$ be independant and identically distributed random variables on $M$ with law $\underline{\vol}:=\vol/\vol(M)$. For any integer $N \ge 1$ and any $t>0$, let $(\cV,\cE,w_t)$ be the random graph and
$$
\mathrm{L}_{t,N}:= I_N  - \mathrm{D}_{t,N}^{-1} W_{t,N} 
$$
the random matrix obtained by applying the Laplacian Eigenmaps process to the random variables $X_1,\ldots,X_N$. To avoid technicalities, we assume that $\cE$ has been constructed by using the second option with $n=N$.

Let $\lambda_0^{t,N}, \ldots, \lambda_{N-1}^{t,N}$ and $\phi_0^{t,N} \ldots, \phi_{N-1}^{t,N}$ be the (random) eigenvalues and normalized eigenvectors of $\mathrm{L}_{t,N}$.

In order to establish the convergence in probability of the spectrum of $\mathrm{L}_{t,N}$ towards the one of $-\underline{\Delta}$, M.~Belkin and P.~Niyogi introduced two intermediary random operators:
\begin{itemize}
\item the \textit{point cloud Laplace operator} $$
\mathcal{L}_{t,N} : C(M) \to C(M) 
$$
defined by
$$
\mathcal{L}_{t,N}f(x) = \frac{1}{t}\frac{1}{(4\pi t)^{d/2}} \left( \frac{1}{N} \sum_{i=1}^N e^{-\frac{\|x-x_i\|_{\setR^D}^2}{4t}}(f(x)-f(x_i)) \right)
$$
for any $f \in C(M)$ and $x \in M$,
\item the \textit{Gaussian functional approximation} of $\underline{\Delta}$
$$
\mathfrak{L}_{t} : L^2(M) \to L^2(M)
$$
defined by
$$
\mathfrak{L}_{t}f(x)=\frac{1}{t}\frac{1}{(4\pi t)^{d/2}} \left( \int_M e^{-\frac{\|x-y\|_{\setR^D}^2}{4t}}(f(x)-f(y)) \di \underline{\vol}(y) \right)
$$
for any $f \in L^2(M)$ and $x \in M$.
\end{itemize}
The point cloud Laplace operator $\mathcal{L}_{t,N}$ acting on the Banach space $(C(M),\|\cdot\|_\infty)$ can be viewed as the difference between the multiplication operator $M_{t,N}$ and the finite rank operator $S_{t,N}$ defined by
$$
M_{t,N} f (x) = w_{t,N}(x) f(x) \qquad \text{and} \qquad  S_{t,N} f(x) = \frac{1}{t(4\pi t)^{d/2}} \frac{1}{N} \sum_{i=1}^N e^{-\frac{\|x-x_i\|_{\setR^D}^2}{4t}} f(x_i)
$$
for any $f \in C(X)$ and $x \in M$, where $w_{t,N}(x) := t^{-1}(4 \pi t)^{-d/2} N^{-1}  \sum_{i=1}^N e^{-\|x-x_i\|_{\setR^D}^2(4t)^{-1}}$. Therefore, the eigenvalue problem
\begin{equation}\label{eq:eigenvalueproblem}
\mathcal{L}_{t,N}f = \nu f
\end{equation}
admits a finite number of solutions.

 Since any function $f:M\to \setR$ defines a function $f_\cV:\cV\to\setR$ by setting $f_\cV(X_i) := f(X_i)$ for any $X_i \in \cV$, if $f$ is a solution of \eqref{eq:eigenvalueproblem}, then
$$
\mathrm{L}_{t,N}f_\cV = \nu f_{\cV}.
$$
From there, one gets that the spectrum of $\mathcal{L}_{t,N}$ coincides with the one of $L_{t,N}$, and that if $f$ is an eigenfunction of $\mathcal{L}_{t,N}$, then $f_{\cV}$ is an eigenfunction of $\mathrm{L}_{t,N}$. Thus it is sufficient to show the convergence of the spectrum of $\mathcal{L}_{t,N}$ towards the one of $-\underline{\Delta}$. 

Before stating the main theorem of this paragraph, let us point out that the eigenvalues $\{\underline{\lambda}_i\}_{i\ge0}$ of $-\underline{\Delta}$ satisfy
$$
\underline{\lambda}_i = \vol(M)\lambda_i
$$
for any $i$, where $\{\lambda_i\}_i$ are the eigenvalues of $-\Delta$, and that the set of orthonormal basis of $L^2(M)$ made of corresponding eigenfunctions coincides with $\cB(M,g)$.

\begin{theorem}
Let $\{\underline{\lambda}_i\}_{i}$ be the eigenvalues of $-\underline{\Delta}$ and $\{\phi_i\}_i \in \mathcal{B}(M,g)$. Then there exists an infinitesimal sequence $\{t_N\}_N \subset (0,+\infty)$ such that for any $i$, the following are true in probability when $N \to +\infty$:
$$
\lambda_{i}^{t_N,N} \to \underline{\lambda}_i \qquad \text{and} \qquad \|\psi_i^{t_N,N} - \phi_i\|_\infty \to 0,
$$
where the $\{\psi_i^{t,N}\}_i$ are normalized eigenfunctions of $\mathcal{L}_{t,N}$ for any $t>0$.
\end{theorem}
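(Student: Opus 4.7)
The plan is to establish the convergence through the two intermediary operators $\mathcal{L}_{t,N}$ and $\mathfrak{L}_t$ introduced in the excerpt. I would proceed in three steps: (i) for each fixed $t>0$, show that $\mathcal{L}_{t,N}\to\mathfrak{L}_t$ spectrally as $N\to+\infty$; (ii) show that $\mathfrak{L}_t\to -\underline{\Delta}$ spectrally as $t\downarrow 0$; (iii) combine (i) and (ii) by a diagonal extraction to produce the sequence $\{t_N\}_N$. The correspondence between the spectra of $\mathcal{L}_{t_N,N}$ and $\mathrm{L}_{t_N,N}$ recalled in the paragraph preceding the statement then closes the argument.

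For (i), both operators decompose on $C(M)$ as $\mathcal{L}_{t,N}=M_{t,N}-S_{t,N}$ and $\mathfrak{L}_t=M_t-S_t$, with $M_{t,N},M_t$ multiplication by the continuous functions $w_{t,N},w_t$ and $S_{t,N},S_t$ integral operators against the kernel $k_t(x,y):=(t(4\pi t)^{d/2})^{-1}\exp(-\|x-y\|_{\setR^D}^2/(4t))$, integrated against the empirical measure $\frac{1}{N}\sum_i\delta_{X_i}$ and against $\underline{\vol}$ respectively. Since $M$ is compact and $k_t$ is bounded and uniformly continuous on $M\times M$, Hoeffding's inequality applied pointwise together with an equicontinuity covering argument yields $\|\mathcal{L}_{t,N}-\mathfrak{L}_t\|_{C(M)\to C(M)}\to 0$ in probability as $N\to +\infty$. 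Since $\{S_{t,N}\}_N$ and $S_t$ form a collectively compact family, standard perturbation theory converts this operator-norm convergence into convergence of eigenvalues of $\mathcal{L}_{t,N}$ to those of $\mathfrak{L}_t$ (with multiplicity) and of the corresponding normalized eigenfunctions in $\|\cdot\|_\infty$.

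For (ii), I would prove strong resolvent convergence of $\mathfrak{L}_t$ to $-\underline{\Delta}$ on $L^2(M,\underline{\vol})$. A direct Taylor computation, using the Minakshisundaram-Pleijel expansion \eqref{eq:MP} and the observation \eqref{eq:restriction} that Riemannian and Euclidean distances on $M\subset\setR^D$ agree to leading order, shows that $\mathfrak{L}_t f\to -\underline{\Delta}f$ uniformly on $M$ for every $f\in C^2(M)$. The symmetry of $k_t$ with respect to $\underline{\vol}$ makes $\mathfrak{L}_t$ self-adjoint on $L^2(M,\underline{\vol})$, and its associated quadratic form
$$
\mathfrak{E}_t(f,f) := \frac{1}{2t(4\pi t)^{d/2}}\int_M\int_M e^{-\|x-y\|^2_{\setR^D}/(4t)}(f(x)-f(y))^2\di\underline{\vol}(x)\di\underline{\vol}(y)
$$
Mosco-converges to the Dirichlet energy of $(M,g,\underline{\vol})$. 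Since both $\mathfrak{L}_t$ and $-\underline{\Delta}$ have compact resolvent, this implies convergence of eigenvalues and of spectral projections, hence of normalized eigenfunctions in $L^2$; elliptic regularity and the smoothness of the $\phi_i$ lift this to $\|\cdot\|_\infty$.

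Finally, for (iii), one combines the quantitative error estimates from (i) and (ii). The difficulty is that the kernel $k_t$ becomes more singular as $t\downarrow 0$ (its Lipschitz constant blows up polynomially in $t^{-1}$), so the concentration bound in (i) degrades with $t$; choosing $t_N$ tending to $0$ sufficiently slowly --- for example so that $Nt_N^{d+\alpha}\to+\infty$ for a suitable $\alpha>0$ controlling the modulus of continuity of $k_t$ --- absorbs both sources of error and yields $\lambda_i^{t_N,N}\to\underline{\lambda}_i$ and $\|\psi_i^{t_N,N}-\phi_i\|_\infty\to 0$ in probability for each fixed $i$. The main obstacle is precisely this quantitative matching of the $N\to+\infty$ and $t\downarrow 0$ limits; a secondary subtlety arises when $\underline{\lambda}_i$ is not simple, in which case eigenvector convergence must be understood modulo orthogonal transformations within the eigenspace of $\phi_i$, a standard issue handled via Kato's perturbation framework at the level of spectral projections.
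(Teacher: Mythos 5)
Your overall architecture coincides with the paper's: the same two intermediary operators $\mathcal{L}_{t,N}$ and $\mathfrak{L}_t$, a fixed-$t$ limit in $N$, a $t\downarrow 0$ limit, and a diagonal choice of $t_N$. Your step (i) is essentially the paper's Step 1 (the Belkin--Bousquet--von Luxburg theorem, proved via Hoeffding's inequality), but the mode of convergence you claim is wrong: $\|\mathcal{L}_{t,N}-\mathfrak{L}_t\|_{C(M)\to C(M)}\to 0$ cannot hold, because the empirical operator $S_{t,N}$ only sees the sample values $f(X_1),\dots,f(X_N)$; taking $f$ with $\|f\|_\infty=1$ and $f(X_i)=0$ for all $i$ shows that $\|S_{t,N}-S_t\|_{C(M)\to C(M)}$ stays bounded away from $0$ for every $N$. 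The correct framework is pointwise (strong) convergence together with collective compactness, i.e.\ Anselone's perturbation theory, which is exactly what the cited theorem packages; since you invoke collective compactness anyway, this is a misstatement rather than a fatal flaw, but as written the ``operator-norm convergence $\Rightarrow$ spectral convergence'' step does not start from a true hypothesis.

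The genuine gap is in step (ii). The operator $\mathfrak{L}_t$ is a \emph{bounded} operator on $L^2(M)$ (multiplication by $w_t$, whose values are of order $t^{-1}$, minus a compact integral operator), so it does not have compact resolvent; its essential spectrum is the essential range of $w_t$, sitting near $t^{-1}$. Consequently the inference ``Mosco convergence of the forms $\Rightarrow$ strong resolvent convergence $\Rightarrow$ convergence of eigenvalues and spectral projections'' fails as stated: strong resolvent convergence only gives spectral lower semicontinuity and does not exclude spurious spectrum or loss of eigenvalues without an additional compactness ingredient (norm-resolvent or compact convergence, or an argument that the essential spectrum escapes to $+\infty$ combined with min--max bounds), none of which you provide. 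Moreover, your step (iii) explicitly requires quantitative error estimates in $t$, while Mosco convergence is purely qualitative, so the diagonal choice of $t_N$ cannot be carried out from what you prove. The paper's Step 2 avoids both issues: it compares the quadratic form of $\mathfrak{L}_t$ with that of $D_t:=(\Id-e^{-t\underline{\Delta}})/t$ through the elementary lemma stating that a uniform bound $\eps$ on the form ratio forces $1-\eps\le \lambda_i(A)/\lambda_i(B)\le 1+\eps$, splits $f$ into low and high frequency parts at level $\alpha=t^{-2/(d+6)}$, and obtains the explicit bound $C_3(\sqrt{t}\,\alpha^{(d+2)/4}+\max(1/\alpha,\sqrt{t}))=o(1)$; since the eigenvalues of $D_t$ are explicitly $(1-e^{-\underline{\lambda}_i t})/t\to\underline{\lambda}_i$, this gives both the convergence $\lambda_i(\mathfrak{L}_t)\to\underline{\lambda}_i$ and the rate needed to match against the $N\to+\infty$ estimates when selecting $t_N$. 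To repair your argument you should either adopt this comparison with $D_t$, or supplement the Mosco approach with control of the essential spectrum of $\mathfrak{L}_t$ and quantitative min--max estimates.
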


The proof of this theorem is made of two steps. We only explain here how to get the convergence result for the eigenvalues.\\

\textbf{Step 1.} The first step consists in showing that the spectrum of the point cloud Laplace operator $\mathcal{L}_{t,N}$  converges as $N$ goes to $+\infty$ to the one of the Gaussian functional approximation $\mathfrak{L}_{t}$. This is a direct consequence of a general theorem obtained by M.~Belkin, O.~Bousquet and U.~von Luxburg in \cite{BelkinBousquetvonLuxburg}. For completeness, let us cite this theorem.  The proof is a suitable application of Hoeffding's inequality. We use the notation $\sigma(T)$ to denote the spectrum of an operator $T$.

\begin{theorem}
Let $(X,\dist,\mu)$ be a compact probability metric measure space, $k:X\times X \to [0,+\infty)$ a continuous and symmetric map such that $k(x,y)>0$ for any $x\neq y$ in $X$, and $\{X_i\}_{i\ge 1}$ independant and identically distributed random variables on $X$ with same law $\mu$. For any $N \ge 1$, let $(\cV_N,\cE_N,k)$ be the random weighted graph constructed from $\{X_1,\ldots,X_N\}$ and $L_N$ the associated random matrix obtained by applying the Laplacian Eigenmaps process. Let $P_k:C(X)\to C(X)$ be the operator defined by
$$
P_k f(x)= \int_X k(x,y)(f(x) - f(y))\di \mu(y)
$$
for any $f \in C(X)$ and $x \in X$. Then for any $\lambda \in \sigma(P_k)\backslash \{1\}$ and any neighborhood $U \subset \setC$ of $\lambda$ that does not contain any other eigenvalue of $P_k$,
\begin{enumerate}
\item any sequence $\{\lambda_N\}_N$ such that $\lambda_N \in \sigma(L_N) \cap U$ for any $N$ satisfies
$$
\lambda_N \to \lambda \qquad \text{almost surely},
$$
\item if $\lambda$ is simple with associated normalized eigenfunction $\phi$ and $\{\lambda_N\}_N$ is such that $\lambda_N \in \sigma(L_N) \cap U$ with normalized associated eigenvector $\phi_N=(\phi_N(X_1),\ldots,\phi_N(X_N))$ for any $N$, then there exists a sequence $\{\eps_N\}_N \subset \{-1,1\}^{\setN}$ such that
$$
\sup_{1\le i \le N} |\eps_N \phi_N(X_i) - \phi(X_i)| \to 0 \qquad \text{almost surely}.
$$
\end{enumerate}
\end{theorem}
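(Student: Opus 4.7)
The plan is to reformulate the spectral problem on the random matrices $L_N$ as a spectral problem on a sequence of random operators on the Banach space $C(X)$, and then to reduce the convergence $\sigma(L_N) \to \sigma(P_k)$ to a \emph{collectively compact} approximation $P_k^{(N)} \to P_k$ on which classical Anselone--Atkinson perturbation theory applies. First I would introduce the random operator
\[
P_k^{(N)} f(x) := \int_X k(x,y)(f(x) - f(y)) \di \mu_N(y) = \frac{1}{N} \sum_{i=1}^N k(x, X_i)(f(x) - f(X_i)),
\]
where $\mu_N := (1/N) \sum_i \delta_{X_i}$. Inspection of the eigenvalue equation $P_k^{(N)} f = \nu f$ shows that, provided $\nu$ stays away from the trivial value $1$ coming from the multiplicative part $M_{w_N} f := w_N(\cdot) f$ with $w_N(x) := (1/N) \sum_i k(x, X_i)$, the restriction map $f \mapsto (f(X_1), \ldots, f(X_N))$ and the ``continuous extension'' formula read off from the eigenvalue equation implement a bijection between eigenpairs of $P_k^{(N)}$ and of $L_N$. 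It thus suffices to prove the spectral convergence $P_k^{(N)} \to P_k$.

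Fix $f \in C(X)$. For each $x \in X$, the random variables $k(x, X_i)(f(x) - f(X_i))$ are i.i.d., bounded by $2 \|k\|_\infty \|f\|_\infty$, with mean equal to $P_k f(x)$. Hoeffding's inequality gives
\[
\mathbb{P}\bigl(|P_k^{(N)} f(x) - P_k f(x)| > \eps\bigr) \le 2 \exp\!\left(-\frac{c N \eps^2}{\|k\|_\infty^2 \|f\|_\infty^2}\right).
\]
Because $k$ is uniformly continuous on the compact space $X \times X$, the family $\{k(\cdot, y)\}_{y \in X}$ is equicontinuous; selecting a $\delta$-net $\{z_1, \ldots, z_M\} \subset X$ with $\delta$ small enough and combining a union bound on the net with an equicontinuity estimate upgrades the pointwise bound to uniform convergence
\[
\sup_{x \in X} |P_k^{(N)} f(x) - P_k f(x)| \longrightarrow 0 \qquad \text{almost surely,}
\]
valid for every fixed $f \in C(X)$.

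To turn this pointwise-in-$f$ convergence into spectral convergence, I would package it as a collectively compact approximation in the sense of Anselone. Writing $P_k^{(N)} = M_{w_N} - K^{(N)}$ and $P_k = M_{w_k} - K$ with $K^{(N)} f(x) := (1/N) \sum_i k(x, X_i) f(X_i)$ and $K f(x) := \int_X k(x, y) f(y) \di \mu(y)$, applying the previous step to $f \equiv 1$ gives operator-norm convergence $M_{w_N} \to M_{w_k}$, while the equicontinuity of $\{k(x, \cdot)\}_{x \in X}$ ensures via Arzelà--Ascoli that $K^{(N)}$ sends the closed unit ball of $C(X)$ into a uniformly bounded equicontinuous subset of $C(X)$, uniformly in $N$. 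A diagonal argument against a countable dense subset of this unit ball, together with the previous step, yields the Anselone condition: for every bounded sequence $\{f_N\} \subset C(X)$, every $C(X)$-cluster point of $(K^{(N)} - K) f_N$ equals $0$, almost surely. The Anselone--Atkinson spectral theorem then ensures that each isolated eigenvalue $\lambda \neq 1$ of $P_k$ of finite multiplicity is the limit of the eigenvalues of $P_k^{(N)}$ in any sufficiently small isolating neighbourhood $U$, with operator-norm convergence of the associated spectral projectors; combined with the dictionary of the first step, this gives item~(1). For item~(2), the simplicity of $\lambda$ forces the spectral projector of $P_k^{(N)}$ on $U$ to be rank-one for $N$ large, producing a continuous eigenfunction $\tilde\phi_N$ converging to $\phi$ in $C(X)$ up to a sign $\eps_N \in \{-1, 1\}$; restricting $\tilde\phi_N$ to $\{X_1, \ldots, X_N\}$ then yields the claimed uniform convergence of the vectors $\phi_N$.

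The main technical obstacle is precisely the passage from pointwise-in-$f$ convergence, which is a soft consequence of Hoeffding's inequality and a covering argument, to a mode of operator convergence strong enough to drive spectral perturbation. Pure operator-norm convergence $P_k^{(N)} \to P_k$ on $C(X)$ fails because the random operator depends on an essentially finite-dimensional sample in a way that the supremum over all unit test functions does not stabilise. The Anselone framework of collectively compact approximation is the tailored fix, but its hypotheses must be checked by carefully exploiting the equicontinuity of the kernel family $\{k(\cdot, y)\}_{y \in X}$, which is the only quantitative role played by the continuity assumption on $k$ in the argument.
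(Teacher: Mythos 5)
Your overall strategy -- pass from the matrix $L_N$ to an empirical operator on $C(X)$, get almost sure uniform convergence on each fixed test function from Hoeffding's inequality plus a covering/equicontinuity argument, and then invoke collectively compact (Anselone--Atkinson) perturbation theory to transfer isolated eigenvalues and, in the simple case, rank-one spectral projections -- is exactly the route of the Belkin--Bousquet--von Luxburg paper that the survey cites. The survey itself gives no proof beyond the remark that Hoeffding's inequality is the key ingredient, so your proposal is far more detailed than anything in the paper and is in the right spirit; the genuinely probabilistic part and the compactness part of your sketch are sound.

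The step that does not hold as written is the dictionary between eigenpairs of $P_k^{(N)}$ and of $L_N$. Restricting an eigenfunction of $P_k^{(N)}f(x)=\frac1N\sum_i k(x,X_i)\bigl(f(x)-f(X_i)\bigr)$ to the sample produces an eigenvector of the \emph{unnormalized} matrix $\frac1N(D_N-W_N)$, whereas the Laplacian Eigenmaps process produces the \emph{normalized} Laplacian $L_N=I_N-D_N^{-1}W_N$; the two differ by the position-dependent factor $N^{-1}d_N(i)$, so they do not share eigenvalues or eigenvectors in general, and the value you must excise is not ``$1$ coming from $M_{w_N}$'' -- the spectrum of that multiplication operator is the range of $w_N$, not $\{1\}$. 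To match the statement you should instead work with the normalized empirical operator $\hat U_N f:=f-\bigl(\sum_i k(\cdot,X_i)f(X_i)\bigr)/\bigl(\sum_i k(\cdot,X_i)\bigr)$, whose restriction to the sample is exactly $L_N$, whose extension formula is available precisely for eigenvalues $\nu\neq 1$, and whose continuum limit $Uf=f-d(\cdot)^{-1}\int_X k(\cdot,y)f(y)\di\mu(y)$ is the identity minus a compact operator, so that the excluded point $1$ is its essential spectrum; this is the setting in which the cited theorem is actually proved. (Part of this tension is inherited from the survey, whose displayed $P_k$ is the unnormalized operator while $L_N$ is normalized; but your proof has to resolve it one way or the other, and with the unnormalized choice the excluded set would have to be the essential range of the degree function rather than $\{1\}$.) Working with the quotient also requires a uniform positive lower bound on the empirical degrees, which you get almost surely for large $N$ from the same Glivenko--Cantelli estimate.

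Two smaller repairs: applying your uniform-convergence step to $f\equiv 1$ gives nothing, since $P_k^{(N)}$ annihilates constants; the operator-norm convergence of the multiplication (resp.\ degree) parts should come from the Hoeffding-plus-net argument applied to $K^{(N)}\mathbf{1}=w_N$, i.e.\ a uniform law of large numbers for the equicontinuous family $\{k(x,\cdot)\}_{x\in X}$. And in item (2) you should state in which norm $\phi_N$ is normalized: if it is unit in $\setR^N$ its entries are of order $N^{-1/2}$ and cannot approximate the values of a fixed eigenfunction, so the dictionary forces the empirical normalization $\frac1N\sum_i\phi_N(X_i)^2=1$ (equivalently, $\phi_N$ is the restriction of the extended eigenfunction normalized in $L^2(\mu_N)$), after which your projection argument gives the claimed uniform convergence up to the sign $\eps_N$.
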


\begin{remark}
Theorem 15 in \cite{BelkinBousquetvonLuxburg} also contains a convergence result when $\lambda$ is not simple formulated in terms of spectral projections.
\end{remark}

\hfill

\textbf{Step 2.} The second step is to study the difference between $-\underline{\Delta}$ and its Gaussian functional approximation $\mathcal{L}_t$. This study is based on an elementary result.

\begin{lemma}
Let $H$ be a Hilbert space and $A,B$ two non-negative self-adjoint operators on $H$ with discrete spectrum listed in increasing order $\{\lambda_i(A)\}_i$ and $\{\lambda_i(B)\}_i$ respectively. Then for any $\eps>0$,
$$
\sup_{x \in H} \left| \frac{\langle (A-B)x, x \rangle}{\langle Ax, x\rangle} \right| \le \eps
$$
implies
$$
1-\eps \le \frac{\lambda_i(A)}{\lambda_i(B)}\le 1+\eps \qquad \text{for any $i$.}
$$
\end{lemma}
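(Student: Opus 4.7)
The plan is to translate the quadratic-form hypothesis into a spectral comparison via the Courant–Fischer min-max principle. First I would unpack the hypothesis: multiplying through by $\langle Ax,x\rangle \ge 0$, the assumption
\[
\sup_{x \in H}\left|\frac{\langle (A-B)x,x\rangle}{\langle Ax,x\rangle}\right| \le \eps
\]
becomes the pointwise bilateral bound $-\eps \langle Ax,x\rangle \le \langle (A-B)x,x\rangle \le \eps \langle Ax,x\rangle$ for every $x \in H$, which rearranges to the quadratic-form sandwich
\[
(1-\eps)\langle Ax,x\rangle \le \langle Bx,x\rangle \le (1+\eps)\langle Ax,x\rangle \qquad \forall x \in H.
\]

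Next I would invoke the Courant–Fischer min-max formula. Since $A$ and $B$ are non-negative self-adjoint operators on $H$ with discrete spectra $\{\lambda_i(A)\}_i$ and $\{\lambda_i(B)\}_i$ listed in increasing order and counted with multiplicity, one has
\[
\lambda_i(T) = \min_{\substack{V \subset H \\ \dim V = i+1}} \ \max_{x \in V\setminus\{0\}} \frac{\langle Tx,x\rangle}{\|x\|^2}
\]
for $T \in \{A,B\}$. Inserting the form comparison into the Rayleigh quotient and then taking the min-max on both sides yields, for every index $i$,
\[
(1-\eps)\lambda_i(A) \le \lambda_i(B) \le (1+\eps)\lambda_i(A).
\]
Dividing through by $\lambda_i(A)$ (respectively $\lambda_i(B)$) on the range where these are positive produces the announced ratio bound; the zero eigenvalue, corresponding in the Laplacian Eigenmaps application to the constants, is handled separately by inspection since the hypothesis forces $\ker A \subset \ker B$ and conversely.

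There is no real obstacle: the lemma is essentially a one-line consequence of Courant–Fischer once the hypothesis has been rewritten as a quadratic-form sandwich. The only minor subtlety is bookkeeping of the ratio direction — the derivation produces $\lambda_i(B)/\lambda_i(A)\in[1-\eps,1+\eps]$, hence $\lambda_i(A)/\lambda_i(B)\in[1/(1+\eps),1/(1-\eps)]$, which matches the stated inequalities up to replacing $\eps$ by $\eps/(1-\eps)$ and is enough for the subsequent deduction of convergence of the Laplacian Eigenmaps spectrum to that of $-\underline{\Delta}$.
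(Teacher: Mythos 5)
Your argument is correct and is exactly the elementary min-max proof the survey has in mind (the paper states the lemma without giving a proof): rewriting the hypothesis as the form sandwich $(1-\eps)\langle Ax,x\rangle \le \langle Bx,x\rangle \le (1+\eps)\langle Ax,x\rangle$ and applying Courant--Fischer gives $(1-\eps)\lambda_i(A)\le\lambda_i(B)\le(1+\eps)\lambda_i(A)$, and the hypothesis indeed forces $\ker A=\ker B$ when $\eps<1$, so the vanishing eigenvalues match as you say. The caveat you flag about the ratio direction is the right one: min-max literally yields $\lambda_i(A)/\lambda_i(B)\in[1/(1+\eps),1/(1-\eps)]$ rather than $[1-\eps,1+\eps]$, a harmless $\eps\mapsto\eps/(1-\eps)$ loss on the upper side that does not affect the subsequent deduction of spectral convergence for the Laplacian Eigenmaps.
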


Thanks to this lemma, if we set $D_t:=(\Id-e^{-t\underline{\Delta}})/t$ for any $t>0$, then showing
\begin{equation}\label{eq:toprove2}
\sup_{f \in L^2(M)} \left| \frac{\langle (D_t-\mathfrak{L}_t)x, x \rangle}{\langle D_t x, x\rangle} \right|  = o(1) \qquad \text{when $t \downarrow 0$}
\end{equation}
implies
$$
\lambda_i(\mathfrak{L}_t) = \lambda_{i}(D_t) + o(1) \qquad \text{when $t \downarrow 0$}
$$
for any $i$, hence the desired result since $$\lambda_{i}(D_t) = (1-e^{-\underline{\lambda}_i t})/t \to \underline{\lambda}_i \qquad \text{when $t \downarrow 0$}.
$$

Let us explain how to prove \eqref{eq:toprove2}. Take $\alpha>0$ to be suitably chosen later, and $f \in L^2(M)$. Write $f = \sum_{i} a_i \phi_i$ and set
$$
f_1 := \sum_{\lambda_i \le \alpha} a_i \phi_i \qquad \text{and} \qquad f_2 := \sum_{\lambda_i > \alpha} a_i \phi_i.
$$
Straightforward manipulations based on the Cauchy-Schwarz inequality lead to
\begin{equation}\label{eq:estimate3}
\left| \frac{\langle (D_t-\mathfrak{L}_t)f, f \rangle}{\langle D_t f, f\rangle} \right| \le \frac{3\|(D_t - \mathfrak{L}_t)f_1\|_{L^2}}{|\langle D_t f, f\rangle|}  + \frac{\|(D_t - \mathfrak{L}_t)f_2\|_{L^2}\|f_2\|_{L^2}}{|\langle D_t f, f\rangle|}
\end{equation}
and the result follows from three estimates. The first one is 
$$
 \langle D_t \phi_i, \phi_i\rangle \ge \frac{1}{2} \min(\lambda_i,1/\sqrt{t})
$$
for any $i$, which is an easy  consequences of the concavity and monotonicity of $F:\lambda\mapsto (1-e^{-\lambda t})/t$. This estimate notably implies
\begin{equation}\label{eq:estimate1}
\langle D_t f, f\rangle \ge \frac{\lambda_1}{2} \qquad \text{and} \qquad 
\langle D_t f, f\rangle \ge \frac{1}{2}\min(\alpha,1/\sqrt{t}) \|f_2\|^2_{L^2}.
\end{equation}
 \begin{comment}
 The first one is
$$
 \langle D_t \phi_i, \phi_i\rangle \ge \frac{1}{2} \min(\lambda_i,1/\sqrt{t})
$$
for any $i$ that is an easy  consequences of the concavity and monotonicity of $F:\lambda\mapsto (1-e^{-\lambda t})/t$.
 ones are
$$
\langle D_t f, f\rangle \ge \frac{\lambda_1}{2}
$$
for $t>0$ small enough and 
$$
\langle D_t f, f\rangle \ge \frac{1}{2}\min(\alpha,1/\sqrt{t}) \|f_2\|^2_{L^2}
$$
which are easy consequences of the concavity and monotonicity of $F:\lambda\mapsto (1-e^{-\lambda t})/t$.
\end{comment}
The two others are
\begin{equation}\label{eq:estimate2}
\|(D_t - \mathfrak{L}_t)f_1\|_{L^2} \le C_1 \sqrt{t} \alpha^{\frac{d+2}{4}}
\qquad \text{and} \qquad
\|(D_t - \mathfrak{L}_t)f_2\|_{L^2} \le C_2 \|f_2\|_{L^2}
\end{equation}
where $C_1$ and $C_2$ depends on the submanifold $M$. The proof of these two estimates is too long to be described here, but we stress out the fact that it involves only classical tools from geometric analysis, like the change of variable with exponential coordinates, the Sobolev embedding $W^{\frac{d}{2}+1,2}(M) \hookrightarrow \Lip(M)$ and the Minakshisundaram-Pleijel expansion. Combining \eqref{eq:estimate3}, \eqref{eq:estimate1} and \eqref{eq:estimate2}, we get
$$
\left| \frac{\langle (D_t-\mathfrak{L}_t)f, f \rangle}{\langle D_t f, f\rangle} \right| \le C_3 (\sqrt{t}\alpha^{\frac{d+2}{4}} + \max(1/\alpha,\sqrt{t}))
$$
where $C_3>0$ depends on the submanifold $M$, so that choosing $\alpha=t^{-\frac{2}{d+6}}$, for instance, implies \eqref{eq:toprove2}.

\begin{remark}
For more convergence results including information on the convergence rate, we refer to \cite{TrillosGerlachHeinSlepcev}  and the references therein.
\end{remark}

\subsection{Singer-Wu Vector Diffusion Maps}

The second algorithm we present is due to A.~Singer and H.-T.~Wu \cite{SingerWu}. The rough idea is to compare two vectors in the data point cloud after having used an orthonormal transformation to make them as close as possible.\\

\textbf{The setting}

Let $x_1,\ldots,x_N \subset \setR^D$ be lying on a smooth $d$-dimensional submanifold $M$ of $\setR^D$. Set $X:=\{x_1,\ldots,x_N\}$.\\

\textbf{The algorithm}

Choose two decreasing functions $K_1,K_2:[0,+\infty)\to[0,+\infty)$ both supported in $[0,1]$\footnote{A.~Singer and H.-T.~Wu chose for $K_1$ the Epanechnikov kernel $K(u)=(1-u^2)\chi_{[0,1]}$ and for $K_2$ the Gaussian kernel $K(u)=\exp(-u^2)\chi_{[0,1]}$}. For any $\eps>0$, set
\[
K_\alpha^{\eps}(\cdot):=K_\alpha(\cdot/\sqrt{\eps})
\]
for any $\alpha \in \{1,2\}$ and  
\[
\mathcal{N}_i^{\eps}:=\{x \in X \, : \, \|x-x_i\|_{\setR^D} \le \sqrt{\eps}\},
\]
that is the set of $\sqrt{\eps}$-neighbors of $x_i$, for any $i \in \{1,\ldots,N\}$. Choose two numbers $0<\eps_1<\eps_2$ in such a way that $d \le \inf \{ |\mathcal{N}_i^{\eps}| \, : \, 1 \le i \le N \}$, so that the sets of neighbors $\mathcal{N}_i^{\eps}$ all contain at least $d$ elements.
\\

\textbf{Step 1:} Local PCA.

The goal of this step is to construct for any $x_i \in X$ a suitable family of orthonormal vectors $\{u_{i,1}, \ldots,u_{i,d}\} \subset \setR^D$ serving as an approximation of an orthonormal basis of $T_{x_i}M$. To simplify the presentation, let us write $m_i:=|\mathcal{N}_i^{\eps}|$ and $\mathcal{N}_i^{\eps}:=\{x_{j_1}, \ldots, x_{j_{m_i}}\}$. Define the matrix 
$$
A_i:=\left[ \lambda_{j_k} (x_{j_k}-x_i) \right]_{1 \le k \le m_i}
$$
where $\lambda_{j_k}:=\sqrt{K_1^{\eps_1}(\|x_{j_k}-x_i\|_{\setR^N})}$ for any $k$. Note that the columns of $A_i$ are the vectors $x_{j_k}-x_i$ rescaled by a factor $\lambda_{j_k}$ which gets big when $\|x_{j_k}-x_i\|_{\setR^N}$ is small: in this way, the closer a point is from $x_i$, the bigger the norm of the corresponding column of $A_i$ is. Compute the singular valued decomposition
$$
A_i=U_i D_i V_i^{*}
$$
and form the $D \times d$ matrix $O_i$ by selecting the $d$ first columns of the matrix $U_i$: if $U_i=[u_{i,1},\ldots,u_{i,D}]$, then
$$
O_i:=[u_{i,1},\ldots,u_{i,d}].
$$

\hfill

\textbf{Step 2:} Alignment.

The goal of this step is to provide, for any $x_i$ and $x_j$ close enough, an orthogonal matrix $O_{ij}$ serving as an approximation of the parallel transport operator between $T_{x_i}M$ and $T_{x_j}M$. This process is called \textit{alignment}. The parameter $\eps_2$ helps quantifying the proximity between $x_i$ and $x_j$. For any $x_i \in X$ and $x_j \in \mathcal{N}_{i}^{\eps_2}$, set
$$
O_{ij}:=\mathrm{argmin}\{\|O-O_i^{T}O_j\|_{HS} \, : \, O \in O(d)\}.
$$
The solution of this minimization problem is $O_{ij}=U_{ij}V_{ij}^{T}$ where $U_{ij}$ and $V_{ij}$ are provided by the singular valued decomposition of $O_{i}^TO_j$.\\

\textbf{Step 3:} Weighted graph.

Define from $X$ a graph $(\cV,\cE)$ by setting $\cV:=X$ and $\cE:=\{(x_i,x_j) \in X\times X \, : \, \|x_i - x_j\|_{\setR^D}<\sqrt{\eps_2} \}$. We use again the notation $j\sim i$ to mean that $(i,j) \in \cE$. Equip $(\cV,\cE)$ with the weight $w$ defined by $w_{ij}:=K_2^{\eps_2}(\|x_i - x_j\|_{\setR^D})$ for any $1 \le i,j \le N$. Here again points $x_j$ that are close to $x_i$ gets more importance than those that are far from $x_i$.\\

\textbf{Step 4:} Averaging operator.

Consider the $Nd \times Nd$ matrix $S$ made of $N\times N$ blocks $(S(i,j))_{1\le i,j \le N}$ of size $d \times d$, where
\[
S(i,j):=
\begin{cases}
(d_w(i))^{-1} w_{ij}O_{ij} & \text{when $i\sim j$,}\\
0 & \text{otherwise},
\end{cases}
\]
where $d_w(i):=\sum_{j\sim i}w_{ij}$. For any $v=(v(1),\ldots,v(N)) \in \setR^{Nd}$ where each $v(i)$ is a vector of $\setR^d$, the vector $Sv=(Sv(1),\ldots Sv(N))$ is such that
\[
(Sv)(i) = \frac{1}{\deg(i)} \sum_{j \sim i} w_{ij}O_{ij}v(j)
\]
for any $i$. Understanding each $v(i)$ as a vector in $T_{x_i}M$ and the vector $O_{ij}v(j)$ as an approximation of the parallel transport of $v(j) \in T_{x_j}M$ into $T_{x_i}M$, we see that the matrix $S$ acts as a local weighted averaging operator for vector fields -- local because it takes into account only the points $x_j$ in the $\sqrt{\eps_2}$-neighborhood of $x_i$.\\

\textbf{Step 5.} Vector diffusion mappings.

Set
$$
\tilde{S}:=D^{-1/2}SD^{1/2}
$$
where $D$ is the diagonal $dN\times dN$ matrix whose $i$-th diagonal $d\times d$ block is $d_w(i)I_d$. Since $\tilde{S}$ is symmetric, it admits eigenvalues $\lambda_1,\ldots,\lambda_{nd}$ and associated normalized eigenvectors $v_1,\ldots,v_{nd}$. We order the eigenvalues in decreasing order of modulus: $|\lambda_1|\ge |\lambda_2| \ge \ldots \ge |\lambda_{nd}|$. A direct computation shows that for any $k \in \setN$ and $1\le i,j \le nd$,
\begin{align*}
\|\tilde{S}^{2k}(i,j)\|_{HS}^2 & = \sum_{l,r=1}^{nd} (\lambda_l \lambda_r)^{2k}\langle v_l(i),v_r(i)\rangle \langle v_l(j),v_r(j)\rangle\\
& = \langle V_k(i),V_k(j) \rangle
\end{align*}
where we have set
$$
V_k(i):=((\lambda_l \lambda_r)^{k} \langle v_l(i),v_r(i) \rangle)_{1\le l,r\le nd}.
$$
Then for any $t>0$, the maps
$$
V_t : X \ni x_i \mapsto ((\lambda_l \lambda_r)^{k} \langle v_l(i),v_r(i) \rangle)_{1\le l,r\le nd}
$$
are called \textit{vector diffusion mappings} of $X$ and the maps
$$
[V_t]^{m} : X \ni x_i \mapsto ((\lambda_l \lambda_r)^{k} \langle v_l(i),v_r(i) \rangle)_{1\le l,r\le m}
$$
are their truncated analogues. These latter maps serve as embeddings of the data set into the Euclidean space $\setR^{m^2}$.

\begin{remark}
The connection between the vector diffusion mappings and the vector diffusion maps for closed Riemannian manifolds is established in \cite[Sect.~5]{SingerWu} in a similar fashion as the one between the Laplacian Eigenmaps and the Bérard-Besson-Gallot spectral embeddings.
\end{remark}

\bibliographystyle{alpha}
\bibliography{Survey_spectral_embeddings}

\end{document}